\definecolor{myurlcolor}{rgb}{0,0,0.4}
\definecolor{mycitecolor}{rgb}{0,0.5,0}
\definecolor{myrefcolor}{rgb}{0.5,0,0}
\makeatletter \@addtoreset{equation}{section} \makeatother
\newtheorem{Theorem}{Theorem}[subsection]
\newtheorem{Definition}[Theorem]{Definition}
\newtheorem{Lemma}[Theorem]{Lemma}
\newtheorem{Proposition}[Theorem]{Proposition}
\theoremstyle{definition}
\newtheorem{Remark}[Theorem]{Remark}
\newtheorem{Example}[Theorem]{Example}
\def\be{\begin{equation}}
\def\ee{\end{equation}}
\def\ba{\begin{eqnarray}}
\def\ea{\end{eqnarray}}
\newcommand{\R}{\mathbb{R}}
\newcommand{\N}{\mathbb{N}}
\newcommand{\Nl}{\mathbb{N}}
\newcommand{\Rl}{\mathbb{R}}
\newcommand{\defin}{:=}  % Alternative: {\stackrel{\mathrm{def}}{=}}
\newcommand{\C}{\mathcal{C}}
\newcommand{\id}{\mathrm{id}}
\newcommand{\vxymatrix}[1]{\vcenter{\vbox{\xymatrix{#1}}}}
\newcommand{\Sets}{\mathsf{Sets}}
\newcommand{\Cats}{\mathsf{Cats}}
\newcommand{\FinSets}{\mathsf{FinSets}}
\newcommand{\Tops}{\mathsf{Tops}}
\newcommand{\Mets}{\mathsf{Mets}}
\newcommand{\Top}{\mathsf{Top}}
\newcommand{\sC}{\mathsf{C}}
\newcommand{\Sp}{\mathsf{Sp}}
\newcommand\op{{\rm op}}
\title{{\sf Compositories and Gleaves}}
\author{{\sf Cecilia Flori\thanks{{\sf cflori@waikato.ac.nz}}, Tobias Fritz}\thanks{{\sf fritz@mis.mpg.de}}\\
\\
{\sf Perimeter Institute for Theoretical Physics}}
\date{\sf \today}
\begin{document}

\maketitle

\begin{abstract}
Sheaves are objects of a local nature: a global section is determined by how it looks locally. Hence, a sheaf cannot describe mathematical structures which contain global or nonlocal geometric information. To fill this gap, we introduce the theory of ``gleaves'', which are presheaves equipped with an additional ``gluing operation'' of compatible pairs of local sections. This generalizes the \emph{conditional product} structures of Dawid and Studen\'y, which correspond to gleaves on distributive lattices. Our examples include the gleaf of metric spaces and the gleaf of joint probability distributions. A result of Johnstone shows that a category of gleaves can have a subobject classifier despite not being cartesian closed. 

Gleaves over the simplex category $\Delta$, which we call compositories,  can be interpreted as a new kind of higher category in which the composition of an $m$-morphism and an $n$-morphism along a common $k$-morphism face results in an $(m+n-k)$-morphism. The distinctive feature of this composition operation is that the original morphisms can be recovered from the composite morphism as initial and final faces. Examples of compositories include nerves of categories and compositories of higher spans.
\end{abstract}

\tableofcontents

\section{Introduction}

\paragraph{Sheaves and stacks.}

Sheaves on topological spaces have the defining property that compatible families of local sections can always be glued together in a unique way. If the given local sections are defined over an open covering of the space, then this yields a global section. It follows that sheaves are entirely \emph{local} entities: a sheaf on a topological space is completely determined by its stalks and how those assemble into the associated \'etale bundle~\cite{MM}. This makes sheaves the right framework for mathematical structures which are of a local nature, in the sense that the only global information contained in a sheaf is of topological character. Similar statements apply to sheaves on sites.

A more sophisticated variant of the notion of sheaf, which enjoys similar locality properties, is the notion of \emph{stack} or \emph{2-sheaf}. Many mathematical structures like vector bundles or principal bundles can be described in terms of a stack. Again, the crucial property here is \emph{locality}: a vector bundle or principal bundle is an entirely local object, meaning that it is also determined by what it looks like on an open cover.

\paragraph{A sheaf of metrics?}

For many other mathematical structures, however, this kind of locality property fails. For example, consider the set of all metrics $d:X\times X\to \R_{\geq 0}$ on a set $X$. Any such metric can be restricted to any subset $U\subseteq X$, and hence we obtain a presheaf $2^X\to\Sets$ which assigns to each subset $U\subseteq X$ the set of all metrics on $U$. However, this presheaf is not a sheaf: in general, there are many ways to extend a pair of metrics
\be
d_U: U\times U\to \R_{\geq 0},\qquad d_V:V\times V\to \R_{\geq 0} ,
\ee
which are compatible in the sense that $d_{U}|_{U\cap V}=d_{V}|_{U\cap V}$, to a metric on $U\cup V$. For example, we may define
\be
\label{gluemetrics}
d(u,v) \defin \inf_{x\in U\cap V} \mleft( d_U(u,x) + d_V(x,v) \mright) 
\ee
for $u\in U\setminus V$ and $v\in V\setminus U$, while retaining the given distances inside $U$ and $V$. As we will see, this is indeed a (pseudo-)metric on $U\cup V$. While this is a canonical extension of the given metrics on $U$ and $V$ to one on $U\cup V$, it is by no means unique and many other extensions are possible in general.

On the other hand, for given compatible metrics $d_U,d_V,d_W$ on \emph{three} pairwise intersecting sets $U,V,W\subseteq X$, there may not exist \emph{any} metric on the union $U\cup V\cup W$ which extends all three of them. The reason is that the given distances may fail to satisfy the triangle inequality for a triangle spanning the three pairwise intersections, i.e.~there may exist points
$$
x\in U\cap V,\quad y\in U\cap W,\quad z\in V\cap W \qquad\text{ s.t. }\qquad d_V(x,z) > d_U(x,y) + d_W(y,z) .
$$
(See Figure~\ref{triangle}.)

These considerations show that the presheaf of metrics has a \emph{global} structure which prevents it from being a sheaf. This should be seen in contrast to the case in which $X$ is a smooth manifold and one assigns to each open $U\subseteq X$ the set of \emph{Riemannian} metrics on $U$. In this case, we do indeed get a sheaf, since a Riemannian metric---by virtue of being a tensor field---is determined by local data. For general metrics, this is not the case: simply knowing how a metric looks locally is not sufficient to determine it globally. 

One of the questions that we would like to tackle in this paper is: what is the presheaf of metrics, if not a sheaf? Equation~\eqref{gluemetrics} suggests that it does have more structure than being ``just'' a presheaf. Such a canonical gluing of pairs of local sections occurs not only in the context of metric spaces, but in many other situations as well, and hence this is one of the things that we would like to formalise. Since it is supposed to capture the idea of a sheaf-like structure describing geometry in a \emph{global} way such that certain \emph{gluings} are still possible, we will call such a gadget a \emph{gleaf}.

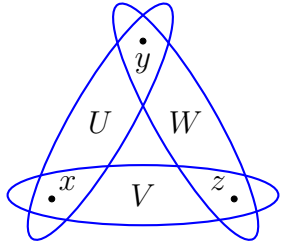
\begin{figure}
\begin{center}
\begin{tikzpicture}
\node[draw,shape=circle,fill,scale=.2] (a) at (90:1.4) {} ;
\node[draw,shape=circle,fill,scale=.2] (c) at (210:1.4) {} ;
\node[draw,shape=circle,fill,scale=.2] (e) at (330:1.4) {} ;
\node[below of=a,node distance=3mm] {$y$};
\node[above right of=c,node distance=3mm] {$x$};
\node[above left of=e,node distance=3mm] {$z$};
\draw[thick,blue,rotate=270] (0:.65) ellipse (.4cm and 1.8cm) ;
\draw[thick,blue,rotate=150] (0:.655) ellipse (.4cm and 1.8cm) ;
\draw[thick,blue,rotate=30] (0:.655) ellipse (.4cm and 1.8cm) ;
\node at (30:.655) {$W$};
\node at (150:.655) {$U$};
\node at (270:.655) {$V$};
\end{tikzpicture}
\end{center}
\caption{Three subsets $U,V,W\subseteq X$ with pairwise intersection.}
\label{triangle}
\end{figure}

We suspect that many other geometrical structures actually form a gleaf. We work this out for the case of topological spaces in Section~\ref{topological}.

\paragraph{Joint probability distributions.}

A very similar situation arises in probability theory~\cite{AB}. In this case, the base set $X$ stands for a collection of random variables. To keep things simple, we assume each variable to take values in the same finite set of outcomes $O$. To a subset of variables $U\subseteq X$, we assign the set of all joint probability distributions $P_U$ of these variables. This assignment turns into a presheaf $2^X\to\Sets$ if one takes the restriction maps to be given by the formation of marginal distributions.

As in the previous example of metrics, compatible triples of local sections on pairwise intersecting subsets $U,V,W\subseteq X$ are often not extendible to a local section on $U\cup V\cup W$. The smallest example occurs for three variables, $X=\{A,B,C\}$, with given subsets the two-variable ones,
\[
U=\{A,B\}, \quad V=\{B,C\},\quad W=\{A,C\},
\]
and binary outcomes $O=\{0,1\}$. Now let $P_U$ stand for perfect correlation between uniformly random $A$ and $B$, and likewise $P_V$ between $B$ and $C$. Since perfect correlation is a transitive relation on random variables, assuming the existence of a joint distribution $P_X$ implies that also $P_W$ corresponds to perfect correlation between $A$ and $C$. Hence, if $P_W$ stands e.g.~for perfect \emph{anti}correlation between $A$ and $C$, then no joint distribution can exist, although the given two-variable distributions are compatible in the sense that they marginalise to the same single-variable distributions. For more detail on such \emph{marginal problems} and further results, see~\cite{AB,FC} and references therein, in particular~\cite{V}.

Nevertheless, again \emph{pairs} of local sections can always be glued together in a canonical way, using a formula vaguely reminiscent of~\eqref{gluemetrics}. If $P_U$ and $P_V$ are given distributions on $U=\{A,B\}$ and $V=\{B,C\}$ which have compatible marginal on $B$,
\be
\label{marginalcomp}
\sum_{a\in O} P_U(a,b) = \sum_{c\in O} P_V(b,c) \qquad\forall b\in O,
\ee
then a canonical joint distribution for $\{A,B,C\}$ is given by
\be
\label{probglue}
P(a,b,c) \:\defin\: \frac{P_U(a,b)P_V(b,c)}{P_{U\cap V}(b)},
\ee
where the denominator term stands for either side of~\eqref{marginalcomp}. Here, it is understood that the left-hand side is declared to be $0$ whenever the denominator vanishes; in this case, also both terms in the numerator vanish. That this joint distribution recovers both $P_U$ and $P_V$ as marginals is easy to see: for example, summing over $a$ turns $P_U(a,b)$ into $P_{U\cap V}(b)$, which cancels with the denominator, so that $P_V(b,c)$ remains. Formula~\eqref{probglue} is natural from the probability point of view: it is precisely that joint distribution which makes $A$ and $C$ conditionally independent given $B$.

There is an immediate generalization to pairs of sets containing any number of variables. For example, if $U\cap V=\emptyset$, then the distribution in the denominator of~\eqref{probglue} is the ``joint'' distribution of no variables, i.e.~equal to the constant $1$, and the overall joint distribution simply becomes the product distribution of $P_U$ and $P_V$.

As we will see, the formal categorical properties of~\eqref{probglue} are entirely analogous to those of~\eqref{gluemetrics}: the presheaf of joint probability distributions of random variables is also a gleaf. This reproduces the \emph{conditional product} structures of Dawid and Studen\'y~\cite{DS} and provides a categorical formulation for them.

A related example of a gleaf is the one formed by relations of finite arity, or equivalently of tables in a relational database (Section~\ref{reldatabase}, also~\cite{DS}).

\paragraph{Gluing as composition.}

There is another point of view on the probability distributions example which hints at a higher categorical structure. We now omit the subscripts on the distributions and simply write $P(a,b)$ for a joint distribution of variables $A,B\in X$.

In the spirit of categorical probability theory, we would like to consider a joint distribution $P(a,b)$ as a \emph{morphism} between the associated marginal distributions $P(a)$ and $P(b)$. In other words, the objects of our category are single-variable distributions $P(a),\: P(b),\:\ldots$, while the morphisms are two-variable distributions such that the marginal of the first variable reproduces the source object and the marginal of the second variable reproduces the target object. Composition of morphisms is defined essentially by~\eqref{probglue}; the only difference is that one needs to take the two-variable marginal of $A$ and $C$ by summing over the possible values of $B$,
\be
\label{Pcomp}
P(a,c)=\sum_b\frac{P(a,b)P(b,c)}{P(b)} .
\ee
Diagrammatically, we thus have
\be
\begin{split}
\label{Pdiag}
\xymatrix{
P(a)\ar[rr]^{P(a,b)}\ar@/_3pc/@{-->}[rrrr]_{P(a,c)}&&P(b)\ar[rr]^{P(b,c)}&&P(c)\\
}\end{split}
\ee
This relates nicely to the usual categories of conditional probability distributions studied in categorical probability theory~\cite{Abram,Dob,Pan}. In the finite case, the morphisms of these categories are typically conditional probability distributions $P(b|a)$, that is stochastic matrices, whose composition is given by matrix multiplication,
$$
P(c|a) = \sum_b P(c|b) \, P(b|a) .
$$
And indeed, this is precisely what one obtains upon dividing both sides of~\eqref{Pcomp} by $P(a)$ and rewriting everything in terms of conditional probabilities.

So far, we have taken single-variable distributions to be objects in our category, while two-variable distributions are morphisms. Now the obvious question is, what about three-variable or $n$-variable distributions? In fact, it seems like the composition of a composable pair of morphisms produce more than just another morphism: if we follow the spirit of~\eqref{probglue}, it seems natural to omit the summation in~\eqref{Pcomp} and consider the three-variable distribution
\be
\label{Pabc}
P(a,b,c)=\frac{P(a,b)P(b,c)}{P(b)} .
\ee
as the ``composition'' of $P(a,b)$ and $P(b,c)$. Recovering the usual composition~\eqref{Pcomp} is simple by just taking a marginal.

\paragraph{Compositories.}

It may now be clear that we will construct a \emph{higher category} of probability distributions in which the $n$-morphisms are $n$-variable joint distributions. Alas, since we want the composition of two $1$-morphisms to be a $2$-morphism
\[\xymatrix{
& P(b)\ar[rd]^{P(b,c)}\ar@{}[]="1" &\\
 P(a)\ar[rr]_{P(a,c)}="2" \ar[ru]^{P(a,b)} && P(c) 
 \ar@{=>}"1";"2"|{P(a,b,c)}
}\]
this ``higher category'' cannot be a higher category in any of the usual senses of the word~\cite{Leinster2}. In fact, we will see that the composition of an $m$-morphism with an $n$-morphism along a common $k$-morphism gives an $(m+n-k)$-morphism. In order to emphasise this conceptual departure from (higher) categories, we will call such gadgets \emph{compositories}. A compository will be a simplicial set equipped with a certain compositional structure satisfying a couple of axioms; as shown in Theorem~\ref{gleavescompo}, they can also be regarded as gleaves over the simplex category $\Delta$. We give a precise definition in Section~\ref{seccomp}.

Natural examples of compositories also arise in category theory: the nerve of a category is a compository in a unique way (Section~\ref{nerves}), and higher spans in a category with pullbacks assemble into a compository (Section~\ref{higherspans}).

\paragraph{Disclaimer.} In the course of our investigations, we have completely changed our basic definitions several times over. While we now seem to have reasonable axioms giving rise to a nice abstract theory, it is not completely clear whether the structures that we study are indeed fundamental, and whether our definitions are appropriate in terms of the precise formulations of the technical details. In fact, we have some indications suggesting that at least the latter is not yet the case; for example, the somewhat strange-looking Definition~\ref{morphbicov}. In this sense, the present paper should be regarded as an exposition of some preliminary definitions which may be subject to change.

\paragraph{Summary and structure of this paper.}

We begin in Section~\ref{simplicialsets} with a brief recap of simplicial sets and fix the corresponding notation. Section~\ref{compositoryintro} introduces compositories as simplicial sets equipped with a composition operation which turns an $m$-simplex and an $n$-simplex into an $(m+n-k)$-simplex, provided that the two given simplices have a common $k$-simplex face. This composition is required to satisfy certain axioms from which we derive a number of consequences, including associativity of composition. We then give various examples of compositories. This starts with nerves of categories in Section~\ref{nerves}; despite being a neat example, we also nerves of generic 2-categories cannot be equipped with a compository structure. Other examples are compositories of higher spans in Section~\ref{higherspans}, the compository of metric spaces in Section~\ref{metspacesI} and the compository of joint probability distributions in Section~\ref{secjpd}. 

We then depart from the study of compositories and introduce the notion of a gleaf on a lattice in Section~\ref{gleaveslattice}, based on the observation that certain presheaves, despite not being sheaves, still have canonical gluings of compatible pairs of local sections. In order to generalise this from gleaves on a lattice to gleaves on a category, Section~\ref{bicoverings} is dedicated to the development of the notion of \emph{system of bicoverings} which relates to gleaves on a category as the notion of Grothendieck topology relates to sheaves on a category. Section~\ref{categorybicoverings} discusses gleaves on a category with bicoverings. Not only do sheaves on a site turn out to be gleaves, but also compositories are the same as gleaves on the simplex category $\Delta$.  Section~\ref{categorygleaves} defines morphisms of gleaves and explains a result of Johnstone showing that the category of gleaves over a certain base has a subobject classifier despite not being cartesian closed.
Concrete examples of gleaves are the gleaf of metric spaces (Section~\ref{takeII}), the gleaf of joint probability distributions (Section~\ref{take2}), a gleaf used in relational database theory (Section~\ref{reldatabase}), and the gleaf of topological spaces (Section~\ref{topological}). In the first three cases, the gluing operation is related to well-known concepts: shortest paths in metric spaces, conditional independence of random variables, and the join operation from relational database theory.

We conclude with a list of further directions in Section~\ref{further}.

\paragraph{Acknowledgements.}

We would like to thank Samson Abramsky, Paolo Bertozzini, Jonathan Elliott, Peter Johnstone, Peter LeFanu Lumsdaine, Prakash Panangaden, David Roberts, Michael Shulman, Rui Soares Barbosa and Ross Street for helpful feedback and suggestions.

Research at Perimeter Institute is supported by the Government of Canada through Industry Canada and by the Province of Ontario through the Ministry of Economic Development and Innovation. The second author has been supported by the John Templeton Foundation.

\section{Compositories}
\label{seccomp}

\subsection{Background on simplicial sets}\label{simplicialsets}

Before introducing compositories, we recall some background on simplicial sets and fix some notation; see also~\cite{Fried,Riehl}. Simplicial sets are among the basic \emph{geometric shapes} for higher categories~\cite{Joyal,Leinster}.

The \emph{simplex category}\footnote{This category is sometimes also called the \emph{topologist's} simplex category in order to distinguish it from the \emph{augmented} simplex category of Remark~\ref{augmented}, which is also known as the \emph{algebraist's} simplex category.} $\Delta$ has as objects the non-empty finite ordinals
\[
	[n] = \left\{ 0,\ldots, n\right\} \qquad (n\in\Nl),
\]
equipped with the standard ordering, and as hom-sets $\Delta([n],[m])$ all the order-preserving functions $[n]\to [m]$. Composition is ordinary composition of functions. Then, a \emph{simplicial set} is a presheaf $\Delta^{\op}\to\Sets$.

For $k\in[n]$, an important class of morphisms in $\Delta$ is given by the \emph{face maps}
\begin{align}
\begin{split}
\partial_k:[n-1]&\to [n]\\
v&\mapsto \begin{cases}v & \text{ if } v<k\\ v+1 & \text{ if } v\geq k,\end{cases}
\end{split}
\end{align}
and the \emph{degeneracy maps}
\begin{align}
\begin{split}
\eta_k:[n+1] & \to [n]\\
v&\mapsto\begin{cases} v & \text{ if }v\leq k\\ v-1 & \text{ if } v>k. \end{cases}
\end{split}
\end{align}
In both cases, we suppress the dependence on $n$ in our notation, since it is determined by the object that $\partial_k$, respectively $\eta_k$, is applied to.

It is well-known that the face and degeneracy maps generate $\Delta$ and present it via the relations

\begin{align}
\begin{array}{rcll}
\partial_k\partial_j &=& \quad\partial_j  \partial_{k-1} &
\hspace{-3.4cm}\text{if }j<k, \smallskip\\[2pt]
\eta_k\eta_j &=& \quad \eta_j \eta_{k+1} & \hspace{-3.4cm}\text{if
}j\leq k, \smallskip\\[2pt]
\eta_k\partial_j &=& \begin{cases}\partial_j \eta_{k-1} &\qquad\text{if
} j<k, \smallskip\\
 \id_{[n]} & \qquad\text{if } j\in\{k,k+1\}, \smallskip\\
  \partial_{j-1}\eta_k & \qquad\text{if } j>k+1.\end{cases}
\end{array}
\end{align}

For convenience of notation, we also introduce the family of morphisms $s_k \defin\partial_n\cdots\partial_{k+1}$, or explicitly 
\begin{align}
\begin{split}
s_k:[k] &\to [n]\\
v&\mapsto v .
\end{split}
\end{align}
For presheaves on $\Delta$, restricting along $s_k$ corresponds to taking the initial $k$-face (``source'') of an $n$-simplex in a simplicial set $\Delta^{\mathrm{op}}\to\Sets$. Similarly, we work with the family of morphisms $t_k=\partial_0\cdots\partial_0$, with $n-k$ factors, or explicitly
\begin{align}
\begin{split}
t_k:[k]&\to [n] \\
v&\mapsto v+n-k.
\end{split}
\end{align}
On the presheaf level, $t_k$ takes the terminal $k$-face (``target'') of every $n$-simplex. Again, the $n$-dependence of $s_k$ and $t_k$ is left implicit.

If $\C:\Delta^{\op}\to\Sets$ is a simplicial set, $f:[m]\to[n]$ and $A\in\C(n)$, then we write $Af$ as shorthand for $\C(f)(A)$ in analogy with the standard notation for the right action of a group on a set.

\subsection{Definition and first properties of compositories}\label{compositoryintro}

In the following, we use the terms ``morphism'' and ``simplex'' interchangeably.  

\begin{Definition}
Let $\C:\Delta^{\op}\to\Sets$ be a simplicial set. A pair of simplices $(A,B)\in \C(m)\times \C(n)$ is \emph{$k$-composable} for $k\in\N$ if the terminal $k$-face of $A$ coincides with the initial $k$-face of $B$,
\be
\label{kcomposable}
A t_k= Bs_k.
\ee
\end{Definition}

For $m=n=1$ and $k=0$, this recovers the usual notion of composability in a category.

The following axioms for compositories constitute a minimal set of formal requirements. We will use them afterwards to derive consequences with a more intuitive meaning.

\begin{Definition}\label{def:comp}
A \emph{compository} is a simplicial set $\C:\Delta^{\mathrm{op}}\rightarrow \Sets$ equipped with a \emph{composition}
$$
A\circ_k B \:\in\: \C(m+n-k)
$$
for every $k$-composable pair $(A,B)\in \C(m)\times \C(n)$ and every $k\in\N$,  such that the following axioms hold:
\begin{enumerate}
\item \emph{Identity axiom:} Composing a morphism $A$ with a source or target face of itself recovers the morphism:
\begin{align}
\begin{split}
\label{comp}
As_k \circ_k A &=A,\\
A \circ_k At_k &=A.
\end{split}
\end{align}
\end{enumerate}
Moreover, for every $k$-composable pair $(A,B)\in\C(m)\times\C(n)$:
\begin{enumerate}[resume]
\item\emph{Back-and-forth axiom:}
\begin{enumerate}
\item For $i\leq n-k$, composing the source $(m+i)$-face of the composition $A\circ_k B$ with $B$ recovers the composition,
\be
\label{bf1}
 (A\circ_k B)s_{m+i} \circ_{k+i} B = A\circ_k B .
\ee
\item For $j\leq m-k$, composing $A$ with the target $(n+j)$-face of the composition $A\circ_k B$ recovers the composition,
\be
\label{bf2}
 A \circ_{k+j} (A\circ_k B)t_{n+j} = A\circ_k B.
\ee
\end{enumerate}
\item\label{compdeg} \emph{Compatibility with degeneracy maps:}
\begin{enumerate}
\item For $i\leq m-k$,
\be
\label{compdeg1}
(A\circ_k B)\eta_i = A\eta_i \circ_k B,
\ee
\item For $i\geq m$,
\be
\label{compdeg2}
(A\circ_k B)\eta_i = A\circ_k B\eta_{i-m+k},
\ee
\item For $m-k\leq i\leq m$,
\be
\label{compdeg3}
(A\circ_k B)\eta_i =  A\eta_i \circ_{k+1} B\eta_{i-m+k} .
\ee
\end{enumerate}
\item\label{compface} \emph{Compatibility with face maps:}
\begin{enumerate}
\item For $i<m-k$,
\be
\label{faceS}
(A\circ_k B)\partial_i=A\partial_i \circ_k B,
\ee
\item For $i>m$,
\be
\label{faceT}
(A\circ_k B)\partial_i=A \circ_k B\partial_{i-m+k}.
\ee
\end{enumerate}
\end{enumerate}
\end{Definition}

\begin{figure}
	\centering
	\subfigure[$(A\circ_1 B)\eta_1 = A\eta_1\circ_1 B$ from \eqref{compdeg1} with $m=1$ and $n=1$.]{
		\xymatrix{	1 \ar@{=}[drr] &	\\
				& & 2 \ar[rr] && 3		\\
				0 \ar[uu] \ar[urr] & \ar@{}[d]|{\textstyle A\eta_1} &	& \ar@{}[d]|{\textstyle B}	\\	
				& & & & }
		}
	\hspace{2pc}
	\subfigure[$(A\circ_1 B)\partial_1 = A\partial_1 \circ_1 B$ from \eqref{faceS} with $m=3$ and $n=2$.]{
		\hspace{1pc}
		\xymatrix{	0 \ar[ddrr]|\hole \ar[rr] \ar[dd]	&&	2 \ar[dd] \ar[drr]	\\
				& & & & 4				&				\\
				\xcancel{1} \ar[rr] \ar[uurr]		& \ar@{}[d]|{\textstyle A\partial_1} &	3 \ar[urr] & \ar@{}[d]|{\textstyle B}	\\
				& & & }
		}	
	\caption{Illustration of Axioms~\ref{compdeg} and~\ref{compface}.}
	\label{natillu}
\end{figure}

Hereby, the $k$-composability assumption on $A$ and $B$ guarantees that all expression in~(\ref{bf1})--(\ref{faceT}) make sense.

\begin{Remark}
In axiom~\ref{compdeg}, both the first and the third case apply to $i=m-k$, so that we can conclude
\be
B\eta_{0} \circ_{k+1} A\eta_{m-k}=A\eta_{m-k}\circ_k B.
\ee
Similarly, both the second and third case apply to $i=m$, and hence
\be
A\eta_m\circ_{k+1} B\eta_{k} = A\circ_k B\eta_k .
\ee
On the other hand, Axiom~\ref{compface} about the compatibility of composition with face maps says nothing at all about the range $m-k\leq i\leq n$, i.e.~when the face map acts on the common face. This is because the putative condition
\be
\label{facenot}
(A\circ_k B)\partial_i \stackrel{?}{=} A\partial_i\circ_{k-1}B\partial_{i-m+k}
\ee
does \emph{not} hold in many of our examples; in fact, it will only hold in the example of nerves of categories (Section~\ref{nerves}).
\end{Remark}

\begin{Remark}
Degenerate simplices in a compository can be thought of as (higher) \emph{identity morphisms}. Equations~\eqref{compdeg1} and~\eqref{compdeg2} state that composing any morphism with a higher identity results in a higher identity.

In more detail, let us consider the degenerate simplex $F\eta_k$ associated to the terminal $k$-face $F\defin At_k$. Since $F\eta_ks_k=F$, we can form the composition $A\circ_k F\eta_k$, and properties~\eqref{comp} and~\eqref{compdeg2} guarantee that
\be
\label{higherid}
A\circ_k F\eta_k = (A\circ_k F)\eta_m = A\eta_m.
\ee
Together with $\eta_m\partial_m = \id = \eta_m\partial_{m+1}$, this implies that
\[
\mleft(A\circ_k F\eta_k\mright)\partial_m = A\eta_m\partial_m = A ,
\]
and likewise
\[
\mleft(A\circ_k F\eta_k\mright)\partial_{m+1} = A\eta_m\partial_{m+1} = A .
\]
In other words, since the simplex~\eqref{higherid} is degenerate over $A$, it has $A$ as two of its faces. Intuitively,~\eqref{higherid} means that $F\eta_k$ can be thought of as an identity over $F$.

A similar statement can be made for an initial face $E\defin As_k$,
\be
E\eta_0 \circ_k A = A\eta_0 .
\ee
In summary, composing with a (higher) identity of a face results in a (higher) identity.
\end{Remark}

We continue with the derivation of some consequences of the axioms, such as associativity of composition. In the following three lemmas, $A\in\C(m)$ and $B\in\C(n)$ are still assumed to be $k$-composable.

\begin{Lemma}
If $i\geq m$ and $j\geq n$, then
\be
\label{stcomp}
(A\circ_k B)s_i = A\circ_k Bs_{i-m+k},\qquad (A\circ_k B)t_j =
At_{j-n+k}\circ_k B.
\ee
\end{Lemma}

\begin{proof}
Repeated application of~\eqref{faceS} and~\eqref{faceT}.
\end{proof}

\begin{Lemma}[Source and target equations]
The original morphisms $A$ and $B$ can be recovered from the composition $A\circ_k B$ as source and target faces:
\begin{align}
\begin{split}
\label{comp2}
(A\circ _k B)s_m &=A,\\
(A\circ _k B)t_n &=B.
\end{split}
\end{align}
\end{Lemma}

\begin{proof}
	We prove the first equation only; the proof of the first equation is analogous. Using~\eqref{stcomp} gives
	\[
		(A\circ_k B)s_m = A\circ_k Bs_k = A\circ_k At_k = A,
	\]
	where the second step is by composability and the third by~\eqref{comp}.
\end{proof}

In this sense, composition in a compository does not lose any information: the original morphisms can be recovered from their composite. We regard this as one of the main features of compositories not shared by other structures such as categories.

\begin{Lemma}[two-step rule]
A composition $A\circ_k B$ can be computed in two steps: for any $k\leq i\leq m$ and $k\leq j\leq n$,
\begin{align}
\begin{split}
\label{2stepcomp}
A\circ_k B &= A\circ_{i} (At_{i} \circ_{k} B) \\
&= (A\circ_k Bs_{j})\circ_{j} B .
\end{split}
\end{align}
\end{Lemma}

\begin{proof}
To see the first equation, we apply~\eqref{bf2} with $j=i-k$
and~\eqref{stcomp},
\[
A\circ_k B = A\circ_i (A\circ_k B)t_{n+i-k} = A\circ_i (At_i \circ_k B).
\]
The proof of the other equation is analogous.
\end{proof}

While associativity is an axiom for many other mathematical structures containing a binary operation, for compositories it is actually a \emph{derived} property:

\begin{Proposition}[Associativity] For a triple $(A,B,C)\in \C(l)\times \C(m)\times C(n)$ which is $(j,k)$-composable in the sense that
$$
At_j = Bs_j,\qquad Bt_k=Cs_k,
$$
it holds that
\be\label{associat}
A\circ_j (B\circ_k C)=(A\circ_j B)\circ_k C .
\ee
\end{Proposition}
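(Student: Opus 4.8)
The plan is to reduce both sides of~\eqref{associat} to one and the same composition by two applications of the $2$-step rule~\eqref{2stepcomp}, using the source and target conditions~\eqref{comp2} to identify the intermediate faces. First I would record that both sides are well-defined and have matching degree. Writing $D = B\circ_k C\in\C(m+n-k)$ and $E = A\circ_j B\in\C(l+m-j)$, the source condition gives $Ds_m = (B\circ_k C)s_m = B$ and the target condition gives $Et_m = (A\circ_j B)t_m = B$. Combined with the hypotheses $At_j = Bs_j$ and $Bt_k = Cs_k$, this shows that $(A,D)$ is $j$-composable, that $(E,C)$ is $k$-composable, and, crucially, that $(E,D)$ is $m$-composable, since $Et_m = B = Ds_m$. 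A quick count confirms that $A\circ_j D$, $E\circ_k C$ and $E\circ_m D$ all live in $\C(l+m+n-j-k)$.

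The heart of the argument is then two applications of the $2$-step rule with split index $m$. Applying the second form of~\eqref{2stepcomp} to $A\circ_j D$ (legitimate since $j\le m\le m+n-k$, which follows from $j\le m$ and $k\le n$) yields
\[
A\circ_j D = (A\circ_j Ds_m)\circ_m D = (A\circ_j B)\circ_m D = E\circ_m D ,
\]
where the middle step uses $Ds_m = B$. Dually, applying the first form of~\eqref{2stepcomp} to $E\circ_k C$ (legitimate since $k\le m\le l+m-j$, i.e.\ $k\le m$ and $j\le l$) yields
\[
E\circ_k C = E\circ_m (Et_m\circ_k C) = E\circ_m(B\circ_k C) = E\circ_m D ,
\]
where the middle step uses $Et_m = B$ and the inner composability $(B,C)$. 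Since both sides equal $E\circ_m D$, the identity $A\circ_j (B\circ_k C) = (A\circ_j B)\circ_k C$ follows at once.

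I expect the main obstacle to be bookkeeping rather than anything conceptual: one must verify that the split index $m$ lies in the range demanded by the $2$-step rule in each application, and that every intermediate pair appearing along the way---namely $(A,B)$, $(B,C)$, $(E,C)$, $(A,D)$ and above all $(E,D)$---is composable at the claimed index. All of these reduce to the inequalities $j\le l$, $j\le m$, $k\le m$ and $k\le n$, each of which is forced simply by the requirement that the faces in the $(j,k)$-composability hypotheses $At_j = Bs_j$ and $Bt_k = Cs_k$ be defined. Once these range and composability conditions are discharged, the two displayed equalities are immediate, so the only real content is the observation that the source face $(B\circ_k C)s_m$ and the target face $(A\circ_j B)t_m$ both recover $B$, which is exactly what lets the $2$-step rule funnel both bracketings through the symmetric middle term $(A\circ_j B)\circ_m(B\circ_k C)$.
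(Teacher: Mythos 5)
Your proof is correct and is essentially the paper's own argument: both reduce the two bracketings to the common middle term $(A\circ_j B)\circ_m(B\circ_k C)$ via the $2$-step rule~\eqref{2stepcomp} at split index $m$, using the source and target conditions~\eqref{comp2} to identify $(B\circ_k C)s_m=B$ and $(A\circ_j B)t_m=B$. The only difference is presentational (you meet in the middle from both ends, the paper runs one chain of equalities), plus your slightly more explicit bookkeeping of composability and index ranges.
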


\begin{proof}
For this to make sense, we have to verify the $j$-composability of $(A,B\circ_k C)$ and the $k$-composability of $(A\circ_j B,C)$. Concerning the first, we have
\[
(B\circ_k C)s_j =(B\circ_k C)s_ms_j \stackrel{(\ref{comp2})}{=} Bs_j = At_j.
\]
The second works similarly.

We now prove~(\ref{associat}) by making use of~(\ref{2stepcomp}) and~(\ref{comp2}),
\begin{align*}
A\circ_j(B\circ_k C) & \stackrel{\eqref{2stepcomp}}{=} (A\circ_j (B\circ_k C)s_m)\circ_m (B\circ_k C) \\[5pt]
& \stackrel{\eqref{comp2}}{=} (A\circ_j B) \circ_m (B\circ_k C) \\[5pt]
& \stackrel{\eqref{comp2}}{=} (A\circ_j B)\circ_m((A\circ_j B)t_m\circ_k C) \stackrel{\eqref{2stepcomp}}{=} (A\circ_j B)\circ_k C.
\end{align*}
\end{proof}

\begin{Remark}
\label{augmented}
One can also work with \emph{augmented simplicial sets}, i.e.~presheaves $\C:\Delta^{\op}_+\to\Sets$ over the \emph{augmented simplex category} $\Delta_+$. Making the analogous definitions for composition, and possibly imposing that $\C(-1)$ be a singleton, one should obtain a definition of what might be coined an \emph{augmented compository}. In an augmented compository, \emph{all} pairs of morphisms $(A,B)\in\C(m)\times\C(n)$ are $(-1)$-composable, and their composition is an $(m+n+1)$-simplex $A\circ_{-1} B$.
Some of our upcoming examples can naturally be considered as augmented compositories. In this paper, though, we try to keep the number of newly introduced concepts somewhat limited, and hence we will not consider augmented compositories.
\end{Remark}

\section{Examples of compositories} 

Before studying in detail the examples mentioned in the introduction, we consider two examples of a category-theoretical nature: nerves of categories and higher spans in categories. These are two different ways of associating a compository to a category.

\subsection{Nerves of categories}
\label{nerves}

We recall the definition of the nerve of a category and then show how it naturally carries the structure of a compository.

\begin{Definition}[{\cite{Segal}}]
Given a small category $\sC$, the nerve of $\sC$ is the simplicial set $\mathcal{N}_\sC$ in which an $n$-simplex is a sequence of $n$ composable morphisms in $\sC$,
\[
\xymatrix{a_0\ar[r]^{f_1}&\ldots\ar[r]^{f_n}&a_n}.
\]
The action of the face map $\mathcal{N}_{\sC}(\partial_k)$ is given by, for $k\neq 0,n$,
\[\xymatrix{
\mathcal{N}_\sC(\partial_k)(a_0\ar[r]^(.65){f_1}&\ldots\ar[r]^(.35){f_n}&a_n)\defin(a_0\ar[r]^(.65){f_1}&\ldots\ar[r]^{f_{k-1}}& a_{k-1}\ar[r] \ar[r]^{f_{k+1}\circ f_k}& a_{k+1}\ar[r]^{f_{k+1}}&\ldots\ar[r]^{f_n}&a_n)},
\]
while for $k=0$,
\[
\xymatrix{
\mathcal{N}_\sC(\partial_k)(a_0\ar[r]^(.65){f_1}&\ldots\ar[r]^(.35){f_n}&a_n)\defin(a_1\ar[r]^(.65){f_2} &\ldots\ar[r]^{f_n}&a_{n})},
\]
and for $k=n$,
\[
\xymatrix{
\mathcal{N}_\sC(\partial_k)(a_0\ar[r]^(.65){f_1}&\ldots\ar[r]^(.35){f_n}&a_n)\defin(a_0\ar[r]^(.65){f_1} &\ldots\ar[r]^{f_{n-1}}&a_{n-1})}.
\]
The action of degeneracy maps is defined as 
\[
\xymatrix{
\mathcal{N}_\sC(\eta_k)(a_0\ar[r]^(.65){f_1}&\ldots\ar[r]^(.35){f_n}&a_n)\defin(a_0\ar[r]^(.65){f_1} & \ldots \ar[r]^{f_k} & a_k \ar@{=}[r]^{\id} & a_k\ar[r]^{f_{k+1}} & \ldots\ar[r]^{f_n} & a_n)}.
\]
\end{Definition}

Upon identifying a sequence of $n$ composable morphisms with a functor $[n]\to\sC$, one can also say that the presheaf $\mathcal{N}_\sC:\Delta^\op\to\Sets$ is the composition of functors
\be
\label{abstractnerve}
\xymatrix{ \Delta^\op\ar@{^{(}->}[r] & \Cats^\op\ar[rr]^{\Cats(-,\sC)} && \Sets },
\ee
where the first arrow is the functor which regards every finite ordinal $[n]$ as a category.

Turning $\mathcal{N}_\sC$ into a compository is almost trivial: we take the composition operation to be given by concatenation of paths. More concretely, a $k$-composable pair $(A,B)$ is a pair of simplices of the form
\begin{align}
\begin{split}
\xymatrix@C=1cm@R=.4cm{
A \ar@{}[r]|{=} & (a_0\ar[r]^{f_1} & \ldots \ar[r]^{f_{m-k}} & a_{m-k}\ar[r]^{f_{m-k+1}} & \ldots\ar[r]^{f_m} & a_m)\\
B \ar@{}[r]|{=} &&& (a_{m-k}\ar[r]^{f_{m-k+1}} & \ldots\ar[r]^{f_m} & a_m\ar[r]^{f_{m+1}} & \ldots\ar[r]^(.4){f_{m+n-k}} & a_{m+n-k})\,,}
\end{split}
\end{align}
and we can simply put
\be
\xymatrix@C=1cm{A\circ_k B \ar@{}[r]|{{\defin}} & (a_0\ar[r]^{f_1} & \ldots \ar[r]^(.45){f_{m-k}} & a_{m-k}\ar[r]^{f_{m-k+1}} & \ldots\ar[r]^{f_m} & a_m \ar[r]^{f_{m+1}} & \ldots\ar[r]^(.4){f_{m+n-k}} & a_{m+n-k})}. \hspace{1pc}
\ee
It is useful to understand this definition of $\circ_k$ more abstractly. Consider
\be
\label{deltapushout}
\vxymatrix{ [k]\ar[r]^{s_k}\ar[d]_{t_k} & [n]\ar[d]^{t_n} \\
[m] \ar[r]_(.35){s_m} & [m+n-k] }
\ee
as a pushout diagram in $\Cats$; applying its universal property to $A:[m]\to\sC$ and $B:[n]\to\sC$ is possible precisely when $(A,B)$ is $k$-composable, and the resulting functor $[m+n-k]\to\sC$ is the composition $A\circ_k B$.

\begin{Proposition}
With these definitions, $\mathcal{N}_\sC$ is a compository.
\end{Proposition}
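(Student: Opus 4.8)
The plan is to lean entirely on the abstract description of $\circ_k$ as the universal property of the pushout~\eqref{deltapushout} in $\Cats$. The point is that a functor $G:[m+n-k]\to\mc$ is the same datum as a $k$-composable pair: it is recovered from its two restrictions $Gs_m$ and $Gt_n$, and conversely every pair $(A,B)$ with $At_k=Bs_k$ arises from a unique such $G$. Hence $A\circ_k B$ is characterised as the \emph{unique} functor satisfying $(A\circ_k B)s_m=A$ and $(A\circ_k B)t_n=B$; in particular the source and target conditions~\eqref{comp2} come for free, and this uniqueness is the only tool I would use to verify the remaining axioms.

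The identity axiom is then immediate: for $B=At_k$ one has $n=k$ and $s_k=\id_{[k]}$, so the pushout~\eqref{deltapushout} collapses to $[m]$ with $s_m=\id$, forcing $A\circ_k At_k=A$; the companion equation $As_k\circ_k A=A$ is symmetric. The back-and-forth axiom is equally short. Writing $H=A\circ_k B$, the composite $(Hs_{m+i})\circ_{k+i}B$ is by definition the unique functor out of the pushout of $[k+i]\xrightarrow{s_{k+i}}[n]$ and $[k+i]\xrightarrow{t_{k+i}}[m+i]$; that pushout has apex $[m+n-k]$ with legs $s_{m+i}$ and $t_n$, and since $H$ restricts along these legs to $Hs_{m+i}$ and to $Ht_n=B$, uniqueness yields $(Hs_{m+i})\circ_{k+i}B=H$.

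For the compatibility axioms I would, in each case, reduce the desired equation of the shape $(A\circ_k B)\gamma=A\gamma'\circ_{k'}B\gamma''$ (with $\gamma$ a face or degeneracy map) to the uniqueness statement for the right-hand side. It then suffices to check that $(A\circ_k B)\gamma$ has the prescribed restrictions along the source and target legs of the relevant pushout, which in turn comes down to a pair of identities among the generating maps of $\Delta$ relating $\gamma$ to $\gamma'$, $\gamma''$ and the appropriate source/target inclusions (for instance $\eta_i s_{m+1}=s_m\eta_i$ for~\eqref{compdeg1}). In the degeneracy cases~\eqref{compdeg1},~\eqref{compdeg2} and the face cases~\eqref{faceS},~\eqref{faceT} the relevant index $i$ sits strictly inside the $A$-block or the $B$-block, so one leg is left untouched and the other is matched by a single application of the simplicial identities; these verifications are routine.

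The main obstacle is the middle-range degeneracy case~\eqref{compdeg3}, where $m-k\le i\le m$ and the gluing index jumps from $k$ to $k+1$. Geometrically this is exactly the situation in which the inserted identity arrow lies over the shared $k$-face $a_{m-k}\to\cdots\to a_m$ and so enlarges it to a shared $(k+1)$-face; this is what makes $A\eta_i$ and $B\eta_{i-m+k}$ into a $(k+1)$-composable pair. Here I would first establish that $(k+1)$-composability, i.e.\ that $(A\eta_i)t_{k+1}=(B\eta_{i-m+k})s_{k+1}$, by computing $\eta_i t_{k+1}$ and $\eta_{i-m+k}s_{k+1}$ and using $At_k=Bs_k$, and then check that $(A\circ_k B)\eta_i$ restricts to $A\eta_i$ along $s_{m+1}$ and to $B\eta_{i-m+k}$ along $t_{n+1}$. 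Both reductions again follow from the third family of face--degeneracy relations, but the bookkeeping around the index shift is the only place where real care is needed. One could instead verify every axiom by the wholly concrete route of tracking how each operation inserts identities into, composes adjacent arrows of, or concatenates the underlying sequences of composable morphisms; this is more transparent but longer, and~\eqref{compdeg3} remains the single delicate point.
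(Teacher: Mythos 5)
Your proof is correct and follows essentially the same route as the paper: both arguments rest entirely on the universal property of the pushout~\eqref{deltapushout} in $\Cats$, i.e.\ on the fact that $A\circ_k B$ is the unique functor $[m+n-k]\to\mc$ restricting to $A$ along $s_m$ and to $B$ along $t_n$, with each axiom reduced to a pair of identities among maps in $\Delta$ (the paper packages these identities as pasted pushout squares and cubes rather than as explicit simplicial relations, but the content is the same).
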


\begin{proof}
The identity axiom~\eqref{comp} is immediate. 

The back-and-forth equation~\eqref{bf1} follows from the uniqueness part of the universal property in the diagram of pushouts
\[\xymatrix{
[k]\ar[r]^(.4){s_k}\ar[d]^{t_k}&[k+i]\ar[r]^(.46){s_{k+i}}\ar[d]^{t_{k+i}}&[n]\ar[d]^{t_n}\ar@/^1.5pc/[ddr]^{B}\\
[m]\ar[r]^(.4){s_m}\ar@/_1.8pc/[drrr]_{A}&[m+i]\ar[r]^(.42){s_{m+i}}\ar@/_.7pc/[drr]|(.4){(A\circ_kB)s_{m+i}}&[m+n-k]\ar[dr]|{A\circ_k B}\\
&&&\sC
}\]
and similarly for the other back-and-forth equation~\eqref{bf2}.

For composition with degeneracy maps in the form~\eqref{compdeg1}, we note that both squares in
\[
\xymatrix@C=2cm@R=1cm{ [k]\ar[d]^{t_k}\ar[r]^(.45){s_k} & [n]\ar[d]^{t_{n}}\ar@/^2.9pc/[dddr]^B &  \\
[m+1] \ar[r]^(.4){s_{m+1}}\ar[d]^{\eta_i} & [m+1+n-k]\ar[d]^{\eta_i}\ar@/^1pc/[ddr]|(.41){(A\circ_k B)\eta_i}|(.5){=}|(.59){A\eta_i\circ_k B} \\
[m]\ar@/_1.8pc/[drr]_A\ar[r]^(.45){s_m}&[m+n-k] \ar@/_.3pc/[dr]|{A\circ_k B} \\ 
&& \sC }
\]
are pushouts as well; that equation~\eqref{compdeg3} holds follows from an analogous diagram. Concerning~\eqref{compdeg2}, we observe that the front and the back square of the commutative cube

\[
\xymatrix@C=2cm@R=1cm{ [k+1]\ar[rr]^{s_{k+1}}\ar[dr]^{\eta_i}\ar[dd]_{t_{k+1}} && [n+1]\ar[dr]^{\eta_{i-m+k}}\ar'[d][dd]^{t_{n+1}} \\
& [k]\ar[rr]^(.35){s_k}\ar[dd]^(.3){t_k} && [n]\ar[dd]^{t_n} \\
[m+1] \ar[dr]^{\eta_i}\ar'[r][rr]^(.4){s_{m+1}} && [m+n+1-k]\ar[dr]^{\eta_{i}} \\
& [m]\ar[rr]_{s_m} && [m+n-k] }
\]
are pushouts, which also implies the desired conclusion by the universal property. The compatibility between composition and face maps can be proven in a very similar way.
\end{proof}

Notably, also property~\eqref{facenot} holds for $\mathcal{N}_\sC$. Indeed even as a simplicial set, $\mathcal{N}_\sC$ is very special: for every $k$-composable pair $(A,B)\in\mathcal{N}_\sC(m)\times\mathcal{N}_\sC(n)$, there is \emph{exactly one} simplex which has $A$ as its initial $m$-face and $B$ as its terminal $n$-face, namely the composition $A\circ_k B$. In other words, there is a unique structure of compository on the simplicial set $\mathcal{N}_\sC$. This is really just a restatement of the universal property of the pushout~\eqref{deltapushout}, which we have used repeatedly in the proof. It is also known as the \emph{Segal condition}, see~\cite{Segal} where it is attributed to Grothendieck.

We find it curious that the composition of the original category is not encoded in the composition of the resulting compository, but rather in its face maps.

Given that the structure of the nerve of a category is accurately captured by that of a compository, we now ask: are the nerves of higher categories~\cite{Street} also compositories? The nerves of strict $\omega$-categories have already been characterised as complicial sets~\cite{Verity} and recently as sets with complicial identities~\cite{Steiner}. In the latter characterisation, the \emph{wedge} operation also increases the dimension of simplices. However, the following argument---based on an idea by Richard Steiner\footnote{personal communication.}---shows that compositories cannot model the nerves of higher categories.

\begin{Theorem}[Steiner]
There is a strict $2$-category whose nerve cannot be equipped with a compository structure.
\label{nohighercats}
\end{Theorem}

\begin{proof}
Let $\sC$ be the strict $2$-category freely generated by the diagram
\[\xymatrix{
&& b \ar[dd]|*+<4pt>{\scriptstyle{g}} \ar[drr]^{h_1}_{}="2" \\
a\ar[urr]^{f_1} \ar[drr]_{f_2}^{}="1" \ar@{=>}[urr];"1"|(.65)*+<4pt>{\scriptstyle{{\alpha}}} &&&& d \\
&& c \ar[urr]_{h_2} \ar@{=>}[];"2"|(.65)*+<4pt>{\scriptstyle{{\beta}}} }
\]
%Its hom-sets are as follows:
%\begin{itemize}
%\item Hom-sets of $1$-cells between objects:
%\begin{align*}
%\Hom(A,B)=\{f_1\},\quad \Hom(A,C)=& \{gf_1,f_2\},\quad \Hom(A,D)=\{h_1f_1,h_2gf_1,h_2f_2\}, \\[5pt]
%\Hom(B,C)=\{f_2\},\quad &\Hom(B,D)=\{h_1,h_2g\} .
%\end{align*}
%All others are empty or only contain the identity.
%\item Hom-sets of $2$-cells between $1$-cells:
%\begin{align*}
%\Hom(gf_1,f_2)=\{\alpha\},\quad \Hom(h_2gf_1,h_1f_1)=\{\beta f_1\},\\[5pt]
%\Hom(h_2gf_1,h_2f_2)=\{h_2\alpha\},\quad \Hom(h_2g,h_1)=\{\beta\}.
%\end{align*}
%All others are empty or only contain the identity.
%\end{itemize}
Its nerve $\mathcal{N}_\sC$ contains a $2$-cell $S$ containing $\alpha$ and a $2$-cell $T$ containing $\beta$. These are $1$-composable since $\partial_0S=\partial_2T=(b\stackrel{g}{\to}c)$. However, there is \emph{no} $3$-cell $R$ with $\partial_3R=S$ and $\partial_0R=T$: such an $R$ would have to contain an additional $1$-cell from $A$ to $D$, together with $2$-cells to this additional $1$-cell from $h_1f_1$ and from $h_2f_2$, respectively. Checking all three possibilities for this additional $1$-cell shows that none of them has the required property.
\end{proof}

\begin{Remark}
But curiously enough, there is a new approach to semistrict higher categories~\cite{kv} in which an $n$-cell and an $m$-cell compose along a common $k$-cell to an $(m + n - k - 1)$-cell\footnote{This has been pointed out to us by an anonymous referee.}, which almost matches the cell dimensions of composition in a compository. However, we are not aware of a connection.
\end{Remark}

\subsection{Higher spans}
\label{higherspans}

Spans and higher spans come up in the study of topological quantum field theories, and various categorical structures have been proposed for talking about them~\cite{grandis,haugseng}. Here, we explain how higher spans in many categories can be regarded as forming a compository.

In the following, let $\sC$ be any category with pullbacks in which all isomorphisms are identities (gaunt category \cite{clark}). The treatment of an arbitrary category $\sC$ will be discussed at the end of this subsection. The reason we consider gaunt categories is because this property guarantees the strict uniqueness of all limits and Kan extensions of functors with codomain $\sC$.

We now move to considering spans in $\sC$, which are diagrams of the form
\[
\xymatrix{ & b\ar[dr]\ar[dl] \\ a && c }
\]
Usually, spans are regarded as morphisms in a bicategory of spans~\cite{Benabou}. A pair of composable spans takes the form
\[
\xymatrix{ & b\ar[dl]\ar[dr] && d\ar[dl]\ar[dr] \\ a && c && e }
\]
and their composition in the bicategory of spans is usually defined to be the span arising from the diagram
\be
\begin{split}
\label{spancomp1}
\xymatrix@!@=0cm{ && b\times_c d\ar[dl]\ar[dr] \\ & b\ar[dl]\ar[dr] && d\ar[dl]\ar[dr] \\ a && c && e }
\end{split}
\ee
upon composing the two outer legs in $\sC$ and forgetting the two arrows arriving at $c$. 

Given the previous considerations, it seems natural to try to do without this ``forgetting'' operation and retain the whole diagram~(\ref{spancomp1}) as a $2$-simplex in a compository. And indeed, in this way we obtain a compository $\mathcal{S}_\sC$ of higher spans in $\sC$. We now embark on the details of this.

\begin{Definition}
For $n\in\Nl$, the \emph{walking $n$-span} $\Sp_n$ is the poset with objects $(v,w)\in\N$ with $v\leq w\leq n$ and
\be
\label{walkingspandef}
(v,w)\leq (v',w') \quad\Longleftrightarrow\quad v\leq v'\:\textrm{ and } \: w'\leq w.
\ee
\end{Definition}

We think of an object $(v,w)\in\Sp_n$ as an \emph{interval} in the poset $[n]$, and these intervals are ordered by reverse containment; in other words, $\Sp_n$ is the \emph{interval domain}~\cite{Scott} associated to the poset $[n]$. In category-theoretic terms, this means that $\Sp_n$ is the twisted arrow category~\cite{MacLane} associated to $[n]=\{0,\ldots,n\}$: an object $(v,w)\in\Sp_n$ can be identified with the arrow $v\stackrel{\leq}{\longrightarrow} w$ in $[n]$, while an arrow $(v,w)\stackrel{\leq}{\longrightarrow}(v',w')$ corresponds to a diagram
\be
\vxymatrix { v\ar[r]^{\leq} \ar[d]_{\leq} & w\ar@{<-}[d]^{\leq} \\
 v'\ar[r]^{\leq} & w' }
\ee
For example, the walking $2$-span $\Sp_2$ is the category generated by the directed graph
\be
\begin{split}
\xymatrix@!@=.5cm{ && (0,2)\ar[dl]\ar[dr] \\ & (0,1)\ar[dl]\ar[dr] && (1,2)\ar[dl]\ar[dr] \\ (0,0) && (1,1) && (2,2) }
\end{split}
\ee
such that the square commutes.

Since taking the twisted arrow category is a functor $-^\mathrm{tw}:\Cats\to\Cats$, composing with the inclusion $\Delta\hookrightarrow\Cats$ gives a functor
\[
\xymatrix{ \Delta\ar@{^{(}->}[r] & \Cats\ar[r]^{-^\mathrm{tw}} & \Cats }
\]
which takes $[n]$ to $\Sp_n$. Then in analogy with~\eqref{abstractnerve}, the composition of functors
\be\label{Ssimplicial}
\vxymatrix{ \Delta^{\op}\ar@{^{(}->}[r] & \Cats\ar[r]^{-^\mathrm{tw}} & \Cats\ar[rr]^{\Cats(-,\sC)} && \Sets }
\ee
defines a simplicial set $\mathcal{S}_\sC:\Delta^{\op}\to\Sets$. More concretely, we can equivalently define its $n$-simplices as $n$-spans in $\sC$:

\begin{Definition}
An \emph{$n$-span} in $\sC$ is a functor $\Sp_n\to\sC$.
\end{Definition}

It is straightforward to show that the resulting face maps $\mathcal{S}_\sC(\partial_k)$ are given by composition with the functors
\[
\Sp_{n-1}\longrightarrow \Sp_n,\qquad (v,w)\mapsto \mleft(\partial_k(v),\partial_k(w)\mright) =  \begin{cases} (v,w) & \text{ if } w<k \\ (v,w+1) & \text{ if } v< k\leq w\\ (v+1,w+1) & \text{ if } k\leq v\end{cases}.
\]
By definition, this is the inclusion functor which misses the two ``lines'' of objects
\[
(0,k),\ldots,(k,k) \quad\text{ and }\quad (k,k),\ldots,(k,n).
\]
Similarly, the degeneracies $\mathcal{S}_\sC(\eta_k)$ arise from the functors
\[
\Sp_{n+1}\longrightarrow \Sp_n,\qquad (v,w)\mapsto \mleft(\eta_k(v),\eta_k(w)\mright) = \begin{cases} (v,w) & \text{ if } w\leq k \\ (v,w-1) & \text{ if } v\leq k< w\\ (v-1,w-1) & \text{ if } k<v \end{cases}.
\]

We now turn to composition of these higher spans. A pair $(A,B)$ with $A:\Sp_m\to\sC$ and $B:\Sp_n\to\sC$ is $k$-composable if the terminal $k$-face of $A$ coincides with the initial $k$-face of $B$. Equivalently, the functors $A$ and $B$ assemble into a functor
\[
[A, B]_{\Sp_k}\::\:\Sp_m \textstyle\coprod_{\Sp_k} \Sp_n\longrightarrow \sC.
\]
where $\Sp_k$ is included in $\Sp_m$ and $\Sp_n$ via $t_k=\partial_0\cdots\partial_0$ and $s_k=\partial_n\cdots\partial_{k+1}$, respectively.

We think of $\Sp_m \textstyle\coprod_{\Sp_k}\Sp_n$ as the ``walking $k$-composable pair'' consisting of an $m$-span and an $n$-span sharing a common $k$-span. Upon regarding $\Sp_m$ as the initial $m$-face of $\Sp_{m+n-k}$ and $\Sp_n$ as the corresponding terminal $n$-face, we obtain a functor $c_{m,k,n}:\Sp_m \textstyle\coprod_{\Sp_k}\Sp_n\longrightarrow \Sp_{m+n-k}$.

\begin{Definition}
The composition $A\circ_k B:\Sp_{m+n-k}\rightarrow \sC$ is the right Kan extension of $[A, B]_{\Sp_k}$ along $c_{m,k,n}$. 
\end{Definition}

Due to the assumption that $\sC$ is gaunt, this Kan extension is necessarily unique. By the pointwise construction of Kan extensions~\cite[Thm.~X.3.1]{MacLane} and the particular form of the categories involved, it can be computed in terms of pullbacks. Explicitly, $A\circ_k B:\Sp_{m+n-k}\rightarrow \sC$
 is the higher span with objects 
\[
(A\circ_k B)(v,w) = \begin{cases} A(v,w) & \text{if } w\leq m\\ A(v,m)\times_{B(0,k)}B(0,w-m+k) & \text{if } v<m-k \text{ and }\: m<w \\ B(v-m+k,w-m+k) & \text{if } m-k\leq v \end{cases}
\]
and the obvious morphisms. The first and third cases are not disjoint; the compatibility assumption $At_k=Bs_k$ guarantees that the left-hand side is nevertheless well-defined. It also guarantees that $B(0,k)=A(m-k,m)$, hence there is no asymmetry between $A$ and $B$ in the second case.

For example for $k=0$, the composition of a $2$-span $A$ with a $1$-span $B$ is given by the $3$-span
\[
\begin{split}
\xymatrix@!@=-2cm{ &&& A(0,2)\times_{B(0,0)} B(0,1)\ar@{-->}[dl]\ar@{-->}[dr] \\ && A(0,2)\ar[dl]\ar[dr] && A(1,2)\times_{B(0,0)}B(0,1)\ar@{-->}[dl]\ar@{-->}[dr] \\ & A(0,1)\ar[dl]\ar[dr] && A(1,2)\ar[dl]\ar[dr] && B(0,1)\ar[dl]\ar[dr] \\ A(0,0) && A(1,1) && A(2,2)=B(0,0) && B(1,1) }
\end{split}
\]

\begin{Theorem}\label{Scompository}
With these definitions, $\mathcal{S}_\sC$ becomes a compository.
\end{Theorem}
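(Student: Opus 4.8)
The plan is to run the same line of argument as for the nerve $\mathcal{N}_\mc$, replacing two ingredients. Since $\mathbf{Sp}_{m+n-k}$ is now strictly larger than the pushout $\mathbf{Sp}_m\coprod_{\mathbf{Sp}_k}\mathbf{Sp}_n$ of walking spans, wherever the nerve proof used the universal property of a pushout of ordinals in $\Cats$ I will instead use the universal property of the right Kan extension along the inclusion $c_{m,k,n}$ of that pushout into $\mathbf{Sp}_{m+n-k}$; and wherever the nerve proof used uniqueness of a factoring functor I will use uniqueness of the Kan extension, which is available because $\mc$ is gaunt. Throughout I will lean on the explicit pointwise formula for $A\circ_k B$ recorded before the theorem: it follows from the pointwise construction of right Kan extensions~\cite[Thm.~X.3.1]{MacLane} together with the fact that $c_{m,k,n}$ is fully faithful, so that $A\circ_k B$ restricts to $A$ and $B$ on the initial $m$-face and terminal $n$-face (first and third cases of the formula), taking a genuine pullback value only on the straddling objects with $v<m-k$ and $m<w$ (second case).

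I would first dispatch the identity axiom~\eqref{comp}: when one factor is the shared face, the pushout collapses, $\mathbf{Sp}_k\coprod_{\mathbf{Sp}_k}\mathbf{Sp}_m\cong\mathbf{Sp}_m$, the comparison functor becomes an isomorphism---hence the identity, by gauntness---and a right Kan extension along an identity is the identity. The substantive structural step is the back-and-forth axiom~\eqref{bf}. Here I factor the situation through the intermediate stage in which one first forms only the initial $(m+i)$-face of the composite, mirroring the pasted-pushout diagram drawn for $\mathcal{N}_\mc$; the identity then follows from the pseudofunctoriality of right Kan extension (extensions compose along the nested inclusions of the walking composable-pair categories) together with the uniqueness supplied by gauntness. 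The same bookkeeping yields the $2$-step rule~\eqref{2stepcomp} and the source/target conditions~\eqref{comp2} for this example.

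The compatibility axioms~\eqref{compdeg1}--\eqref{compdeg3} and~\eqref{faceS}--\eqref{faceT} are base-change statements: each says that precomposing $A\circ_k B$ with a face or degeneracy functor of $\mathbf{Sp}$ equals the composition of the correspondingly restricted spans, i.e.\ that restriction along that functor commutes with the Kan extension. For each I would check that the resulting square of $\mathbf{Sp}$-functors commutes and induces a compatible comparison of the comma categories appearing in the pointwise formula---a Beck--Chevalley-type condition. In the outer ranges ($i<m-k$ or $i>m$ for faces, $i\le m-k$ or $i\ge m$ for degeneracies) no straddling object is created or destroyed and the claim is immediate from the formula; in the middle range it reduces to stability of the defining pullback under the induced maps.

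The hard part will be exactly this middle, straddling range---underlying both the back-and-forth step and~\eqref{compdeg3}---where the Kan-extension value is a genuine pullback. There one must establish a pasting-of-pullbacks identity showing that the pullback formed for the full composite agrees with the one formed in two stages and is preserved by the degeneracy functor, and, because right Kan extensions do not commute with arbitrary precomposition, one must confirm that the particular $\mathbf{Sp}$-functors in play are initial (resp.\ final) enough for the base-change to hold. Concretely this amounts to checking that the comma categories used in the two computations are canonically isomorphic, a finite combinatorial verification on intervals in $[m+n-k]$ that I would carry out case by case.
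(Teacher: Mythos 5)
Your proposal follows essentially the same route as the paper's proof: the identity axiom is immediate, the back-and-forth axiom is obtained by composing right Kan extensions along the nested inclusions of the walking composable-pair categories (with gauntness supplying uniqueness), and the five face/degeneracy compatibilities are reduced to a base-change condition for the square relating $\mathbf{Sp}_{m'}\coprod_{\mathbf{Sp}_{k'}}\mathbf{Sp}_{n'}\to\mathbf{Sp}_{m'+n'-k'}$ to $\mathbf{Sp}_{m}\coprod_{\mathbf{Sp}_{k}}\mathbf{Sp}_{n}\to\mathbf{Sp}_{m+n-k}$, settled by a finite combinatorial check on intervals---exactly the paper's exact-square argument, carried out uniformly for a general map $\xi$ rather than axiom by axiom. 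One correction to your last paragraph: the condition to verify is \emph{not} that the comma categories in the two pointwise computations are canonically isomorphic (they generally are not, e.g.\ under a degeneracy, which enlarges one of them), but that the comparison functor between them is initial, which for these posets amounts to the relevant slice posets being non-empty and connected; this is precisely the criterion the paper checks by exhibiting least elements (or a pair of elements with a common lower bound) in each of the straddling cases.
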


\begin{figure}
\begin{center}
\begin{tikzpicture}
\draw (0,0)--(3,3) -- (6,0) ;
\draw (3,0) -- (6,3) -- (9,0) ;
\draw (5,0) -- (9,4) -- (13,0) ;
\draw[dashed] (3,3) -- (4.5,4.5) -- (6,3) -- (8,5) -- (9,4) ;
\draw[dotted] (4.5,4.5) -- (6.5,6.5) -- (8,5) ;
\node at (3,2) {$A$} ;
\node at (6,2) {$B$} ;
\node at (9,2) {$C$} ;
\node at (4.5,3) {$A\circ_j B$} ;
\node at (7.5,3.5) {$B\circ_k C$} ;
\node at (6.5,5) {$A\circ_j B\circ_k C$} ;
\end{tikzpicture}
\end{center}
\caption{Schematic illustration of three composable higher spans and their compositions.}
\label{schematicspans}
\end{figure}
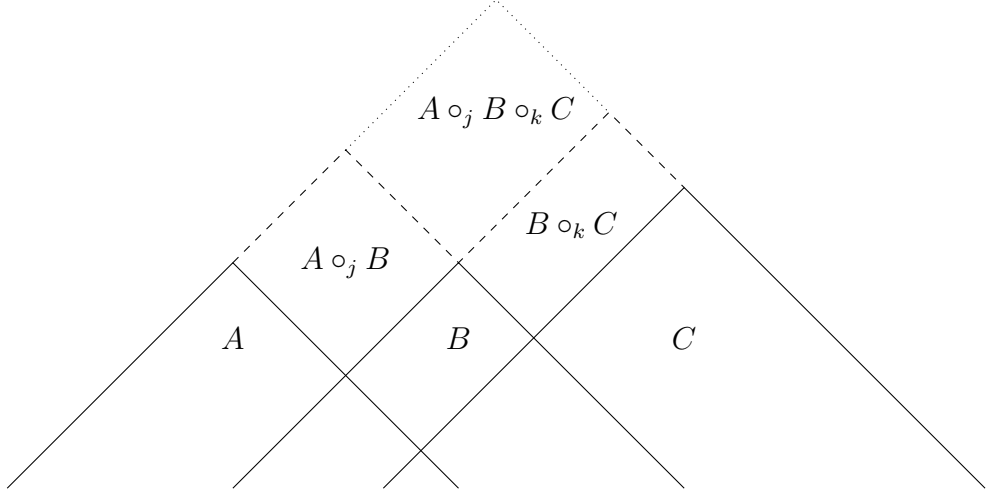

\begin{proof}
The identity axiom is trivially satisfied. Moreover,~\cite[Cor.~X.3.4]{MacLane} shows that $(A\circ_k B)c_{m,k,n}=[A,B]_{\Sp_k}$, so that 
$(A\circ_k B)s_m=A$ and $(A\circ_k B)t_n=B$. 

For the back-and-forth axiom we need to show that $(A\circ_k B)s_{m+i}\circ_{k+i} B=A\circ_k B$; that this is also equal to $A\circ_{k+j} (A\circ_k B)t_{n+j}$ can be shown in a similar way.

Consider the diagram 
\[\vxymatrix{
\Sp_m\coprod_{\Sp_k}\Sp_n\ar@/^3pc/@{^{(}->}[rrdd]^{JI}\ar[dd]_{[A,B]_{\Sp_k}}\ar@{^{(}->}[dr]^{I}\\
&\Sp_{m+i}\coprod_{\Sp_{k+i}}\Sp_n\ar@{^{(}->}[dr]^J\ar[dl]|{F{\defin}\mathrm{Ran}_I[A,B]} \\
\sC&&\Sp_{m+n-k}\ar[ll]^{G{\defin}\mathrm{Ran}_JF} 
}\qquad
\vxymatrix{ I\defin s_m^{\mathrm{tw}}\coprod_{s_k^{\mathrm{tw}}}\id \\  J\defin c_{m+i,k+i,n} }
\]
The definition of $G$ implies that it is also the right Kan extension of $[A,B]_{\Sp_k}$ along $JI$, i.e.~$G=A\circ_k B$. By~\cite[Cor.~X.3.4]{MacLane}, $F=GJ$, so that $F=[(A\circ_k B)s_{m+i}, B]_{\Sp_k}$. By definition of composition by Kan extension, $G=(A\circ_k B)s_{m+i}\circ_{k+i} B$. 

To prove compatibility with both face maps and degeneracy maps, we will  work with a general monotone map $\xi:[m'+n'-k']\rightarrow [m+n-k]$ which satisfies the following conditions:
\begin{enumerate}
\item $v'\leq m'\Longrightarrow \xi(v')\leq m$
\item $v'\geq m'-k'\Longrightarrow \xi(v')\geq m-k$
\item $m-k\leq v\leq m\Longrightarrow\;\exists v' \text{ with } m'-k'\leq v'\leq m'\; \text{such that }\xi(v')=v$
\end{enumerate}
These imply in particular
\be
\label{boundaries}
\xi(m'-k')=m-k,\qquad \xi(m')=m.
\ee
Upon specialising $\xi$ to the appropriate face and degeneracy maps, all five axioms~\eqref{compdeg1}--\eqref{faceT} become special cases of the following argument.

We consider the square  
\be\label{square}\vxymatrix{
\Sp_{m'}\coprod_{\Sp_{k'}}\Sp_{n'}\ar[rr]^{c_{m',k',n'}}\ar[d]^{\xi|_{}}&&\Sp_{m'+n'-k'}\ar[d]^{\xi}\\
\Sp_m\coprod_{\Sp_k}\Sp_n\ar[rr]^{c_{m,k,n}}&&\Sp_{m+n-k}
}\ee
where $\xi_|$ is the restriction of $\xi$. It needs to be shown that the right Kan extension of $[A,B]_{\Sp_k}\xi|_{}$ along $c_{m',k',n'}$ can be computed as 
$$
\mathrm{Ran}_{c_{m',k',n'}}([A,B]_{\Sp_k}\xi|_{})=(\mathrm{Ran}_{c_{m,k,n}}[A,B]_{\Sp_k})\xi
$$
In modern terminology, this means that we need to show that~\eqref{square} is an \emph{exact square}~\cite{carres,exsqnlab}.
Using the combinatorial characterisation of exact squares~\cite{carres,exsqnlab} and the fact that all four categories in~\eqref{square} are posets, this boils to proving that for any $(v,w)\in\Sp_m\coprod_{\Sp_k}\Sp_n$ and $(v',w')\in\Sp_{m'+n'-k'}$ with $\xi(v',w')\rightarrow (v,w)$, the poset 
$$
\left\{(x',y')\in \Sp_{m'}\coprod_{\Sp_{k'}}\Sp_{n'}\;\Biggm|\;(v',w')\rightarrow(x',y'),\;\xi|_{}(x',y')\rightarrow (v,w)\right\}
$$
is non-empty and connected. Spelling out these conditions gives that the assumptions
$$
\begin{matrix}  v\leq w\\[5pt]v'\leq w'\\[5pt] m-k\leq v\;\lor\; w\leq m\\[5pt] \xi(v')\leq v,\quad w\leq \xi(w')\end{matrix}$$
should imply that
\[
	I\defin\left\{(x',y')\in[m'+n'-k']^{\times 2} \;\;\Biggm|\;\; \begin{matrix} m'-k'\leq x' \,\lor\, y'\leq m'\\[5pt] v'\leq x',\quad y'\leq w'\\[5pt] w\leq \xi(y'),\quad \xi(x')\leq v\end{matrix}\;\right\}
\]
is non-empty and connected with respect to the ordering induced from $\Sp_{m'}\coprod_{\Sp_{k'}}\Sp_{n'}$. If $v'\geq m'-k'$ or $w'\leq m'$, then $(v',w')\in I$ is a least element. In particular, $ I$ is non-empty and connected. Hence for the remainder of this proof, we can assume $v'<m'-k'$ and $w'>m'$, and distinguish three cases:
\begin{enumerate}[leftmargin=6cm]
\item[\underline{Case $v<m-k$ and $w\leq m$:}] We claim that $(v',m')$ is a least element of $ I$. Indeed $(v',m')\in I$ since $m'<w'$ and $w\leq m=\xi(m')$ and $\xi(v')\leq v$ as assumed above. For any $(x',y')\in I$, we have $v'\leq x'$ and $y'\leq m'$ since the case $m'-k'\leq x'$ is impossible due to $\xi(x')\leq v<m-k=\xi(m'-k')$, and hence $(v',m')\to(x',y')$, as claimed. So since $ I$ has a least element, it is non-empty and connected. 
\item[\underline{Case $v\geq m-k$ and $w>m$:}] This is analogous to the previous case, where the least element now is $(m'-k',w')\in I$.
\item[\underline{Case $v\geq m-k$ and $w\leq m$:}] Due to the same reasoning as in the previous two cases, we have $(v',m')\in I$ and $(m'-k',w')\in I$, which implies that $ I$ is non-empty. For any other $(x',y')\in I$, we have $(v',m')\to(x',y')$ if $y'\leq m'$, while $(m'-k',w')\to(x',y')$ if $m'-k'\leq x'$. On the other hand, we have $(m'-k',m')\in I$ with $(v',m')\to(m'-k',m')$ and $(m'-k',w')\to(m'-k',m')$. This shows connectedness.\qedhere
\end{enumerate}
\end{proof}

\begin{Example}\label{nokan}
In general, $\mathcal{S}_{\sC}$ is not a weak Kan complex (quasi-category~\cite{Joyal}). For example for the category $\sC=[2]\times[2]$, which is a poset with Hasse diagram 
\[\xymatrix{
&\top\ar[dr]\ar[dl]\\
\alpha\ar[dr]&&\beta\ar[dl]\\
&\bot
}
\]
this can be seen as follows. The three $2$-spans 
\[\xymatrix@-=5pt@!{
&&\alpha\ar[dr]\ar[dl]\\
A=&\bot\ar[dr]\ar[dl]&&\bot\ar[dr]\ar[dl]\\
\bot&&\bot&&\bot
}\;\xymatrix@-=5pt@!{&&\beta\ar[dr]\ar[dl]\\
B=&\beta\ar[dr]\ar[dl]&&\bot\ar[dr]\ar[dl]\\
\bot&&\bot&&\bot}\;\xymatrix@-=5pt@!{&&\beta\ar[dr]\ar[dl]\\
C=&\bot\ar[dr]\ar[dl]&&\bot\ar[dr]\ar[dl]\\
\bot&&\bot&&\bot}\] 
satisfy
\[
	A\partial_0=B\partial_0,\qquad B\partial_2=C\partial_1,\qquad C\partial_0=A\partial_2
\]
and hence assemble into an inner horn $\Lambda^3_2\rightarrow \mathcal{S}_{\sC}$ as illustrated by 
\[
\xymatrix{ && 3 \\ & &&\\&& 2\ar[uu]\ar@{}[d]|(.6){\textstyle{C}}\ar@{}[ul]|{\textstyle{B}}\ar@{}[ur]|{\textstyle{A}} \\ 0\ar[uuurr] \ar[urr]\ar[rrrr] &&&& 1\ar[uuull]\ar[ull]}
\]
 Any potential $3$-span filler $D$ of this inner horn needs to satisfy $D\partial_0=A$, which implies $D(1,2)=A(0,2)=\alpha$, and $D\partial_1=B$, which implies $D(0,3)=B(0,2)=\beta$. Since $\beta\not\to\alpha$, such a $D$ does not exist, and hence $\mathcal{S}_\sC$ is not a quasi-category.
 
Since this argument did not actually make use of $C$, there is not even any $3$-span having $A$ as its $0$th face and $B$ as its $1$st face.
\end{Example}

The nerve $\mathcal{N}_\sC$ is a subcompository of $\mathcal{S}_\sC$ as follows. For every $n\in\Nl$, there are two monotone maps
$$
\textphnc{p},\textphnc{\ARp}\;:\;\Sp_n\longrightarrow [n],\qquad\textphnc{p}(v, w)\defin v,\qquad\textphnc{\ARp}(v,w)\defin n-w.
$$
Precomposition with either \textphnc{p} or \textphnc{\ARp} turns a functor $[n]\rightarrow \sC$ into a functor $\Sp_n\rightarrow \sC$. We regard this as a map $\mathcal{N}_\sC(n)\rightarrow \mathcal{S}_\sC(n)$. Since both functors have left inverses, this exhibits $\mathcal{N}_{\sC}(n)$ as a subset of $\mathcal{S}_\sC(n)$ in two ways. Our goal is to show that this inclusion respects the compository structure:

\begin{Proposition}
With these definitions, $\mathcal{N}_\sC$ is a subcompository of $\mathcal{S}_\sC$ in two ways.
\end{Proposition}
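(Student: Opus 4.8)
The plan is to verify the two defining conditions for a subcompository inclusion: that precomposition with each of $\textphnc{p},\textphnc{\ARp}\colon\mathbf{Sp}_n\to[n]$ is a \emph{monomorphism of simplicial sets} $\mathcal{N}_\mc\hookrightarrow\mathcal{S}_\mc$, and that its image is \emph{closed under} $\circ_k$ with the induced composition agreeing with the one on $\mathcal{N}_\mc$ (which, by Section~\ref{nerves}, is concatenation of paths). Since $\mathcal{S}_\mc$ already satisfies the compository axioms, a closed sub-simplicial-set automatically inherits them, so these two conditions suffice. I would also note at the outset that the involution $r_n\colon\mathbf{Sp}_n\to\mathbf{Sp}_n$, $r_n(v,w)=(n-w,n-v)$, satisfies $\textphnc{p}\circ r_n=\textphnc{\ARp}$ and $\textphnc{\ARp}\circ r_n=\textphnc{p}$; it therefore interchanges the two embeddings, so it is enough to treat $\textphnc{p}$ in full and obtain $\textphnc{\ARp}$ by this duality.

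For the simplicial-map condition I would first record the naturality identity $\textphnc{p}_n\circ f^\Box=f\circ\textphnc{p}_m$ for every $f\colon[m]\to[n]$ in $\Delta$ (with $f$ also read as a functor), which is immediate since both sides send $(v,w)$ to $f(v)$. Applying $\Cats(-,\mc)$ turns this into the naturality of $a\mapsto a\circ\textphnc{p}$, i.e.\ a morphism $\mathcal{N}_\mc\to\mathcal{S}_\mc$ of simplicial sets; it is injective because $v\mapsto(v,n)$ is a section of $\textphnc{p}_n$, so $a$ is recoverable from $a\circ\textphnc{p}$. The analogous computation for $\textphnc{\ARp}$ gives instead $\textphnc{\ARp}_n\circ f^\Box=f^{\ast}\circ\textphnc{\ARp}_m$, where $f^{\ast}(v)=n-f(m-v)$ is the order-dual of $f$; this is exactly the assertion that the $\textphnc{\ARp}$-embedding becomes simplicial after conjugating by the reversal, i.e.\ it intertwines $\partial_k$ with $\partial_{n-k}$, consistently with $r_n$.

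The core of the argument is compatibility with $\circ_k$. Given a $k$-composable pair $(a,b)\in\mathcal{N}_\mc(m)\times\mathcal{N}_\mc(n)$, naturality and injectivity show that $A\defin a\circ\textphnc{p}$ and $B\defin b\circ\textphnc{p}$ form a $k$-composable pair in $\mathcal{S}_\mc$, and conversely that every $k$-composable pair in the image arises this way from $at_k=bs_k$. I would then evaluate the right Kan extension $A\circ_k B$ through the explicit pullback formula. The decisive observation is that $A(v,w)=a(v)$ depends only on the left endpoint, so $A$ sends every morphism of $\mathbf{Sp}_m$ that fixes the left endpoint to an identity of $\mc$; hence the leg $B(0,w-m+k)\to B(0,k)$ occurring in the middle case of the formula is an identity, and the pullback $A(v,m)\times_{B(0,k)}B(0,w-m+k)$ collapses onto $A(v,m)=a(v)$. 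Consequently $(A\circ_k B)(v,w)$ depends only on $v$ and equals $(a\circ_k b)(v)$ for the concatenated path $a\circ_k b$, and the same identity-leg reasoning matches the two functors on morphisms. Because $\mc$ is gaunt these are strict equalities, giving $A\circ_k B=(a\circ_k b)\circ\textphnc{p}$, which is precisely the image of the $\mathcal{N}_\mc$-composite; this simultaneously proves closure and agreement.

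I expect the main obstacle to be careful bookkeeping in this last step rather than anything conceptual: one must check that the pullbacks collapse uniformly across all three cases of the formula---including their overlaps, where the compatibility $at_k=bs_k$ is exactly what makes the value well defined---and that the induced span morphisms, not merely the objects, agree. The one genuinely delicate point is the $\textphnc{\ARp}$ case: since $\textphnc{\ARp}$ intertwines the simplicial operators with their reversals, I would phrase the duality cleanly through $r_n$, checking that $r_n$ carries the compository structure of $\mathcal{S}_\mc$ to itself so that the $\textphnc{p}$-result transports verbatim to the $\textphnc{\ARp}$-embedding.
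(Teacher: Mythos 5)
Your treatment of the $\textphnc{p}$-embedding is correct, and on the key point it takes a genuinely different route from the paper. The naturality identity $\textphnc{p}_{n}\circ f^{\Box}=f\circ\textphnc{p}_{m}$ and injectivity via the section $v\mapsto(v,n)$ coincide with the paper's argument. For compatibility with $\circ_k$, however, the paper does not unwind the pointwise pullback formula as you do; it shows instead that the square comparing $c_{m,k,n}$ with the identity of $[m+n-k]=[m]\amalg_{[k]}[n]$, with vertical legs $\textphnc{p}\amalg\textphnc{p}$ and $\textphnc{p}$, is an \emph{exact square}, by the same least-element case analysis as in the proof of Theorem~\ref{Scompository}. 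That identifies $(a\circ_k b)\textphnc{p}$ with the right Kan extension defining $(a\textphnc{p})\circ_k(b\textphnc{p})$ in one stroke, morphisms included. Your explicit collapse of the pullbacks $A(v,m)\times_{B(0,k)}B(0,w-m+k)$ onto $a(v)$ (using gauntness to promote the canonical isomorphism to an equality) is a sound and more concrete substitute on objects, but, as you yourself flag, it leaves the agreement on morphisms to bookkeeping; the exact-square formulation is what the paper's approach buys, since it handles objects and morphisms uniformly.

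The genuine gap is in the $\textphnc{\ARp}$ half, and your own computation exposes it: since $\textphnc{\ARp}_{n}\circ f^{\Box}=f^{\ast}\circ\textphnc{\ARp}_{m}$ with $f^{\ast}$ the order-dual of $f$ rather than $f$ itself, precomposition with $\textphnc{\ARp}$ is \emph{not} a morphism of simplicial sets $\mathcal{N}_\mc\to\mathcal{S}_\mc$. Concretely, for a $1$-simplex $a=(a_0\to a_1)$ one finds $(a\textphnc{\ARp})\partial_1=a_1$ while $(a\partial_1)\textphnc{\ARp}=a_0$. Transporting the $\textphnc{p}$-result along the reflection $r_n$ does not repair this: $r_n$ intertwines $\partial_k$ with $\partial_{n-k}$ and reverses the order of composition, so what you actually obtain is an embedding of the \emph{reversed} nerve $\mathcal{N}_{\mc^{\op}}$ into $\mathcal{S}_\mc$, not a second embedding of $\mathcal{N}_\mc$ itself. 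To be fair, the paper's own proof is no more careful here---it dismisses the $\textphnc{\ARp}$ case as ``analogous'', resting on a naturality square for $\textphnc{\ARp}$ that in fact fails to commute---so you have put your finger on a real defect in the statement rather than merely failed to reproduce its proof; but as written your duality argument does not establish the proposition's ``in two ways''.
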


\begin{proof}
We will give the proof only for $\textphnc{p}$; an analogous proof applies to \textphnc{\ARp}.
All that needs to be shown is compatibility with face/degeneracy maps and composition. The first follows from the commutativity of 
\[\xymatrix{
\Sp_n\ar[r]^{\xi^{\mathrm{tw}}}\ar[d]_{\textphnc{p}}&\Sp_{n'}\ar[d]^{\textphnc{p}}\\
[n]\ar[r]^{\xi}&[n']
}
\]
for any $\xi\in\Delta([n],[n'])$; this square is an instance of naturality of $\textphnc{p}$, regarded as the restriction of a natural transformation $-^{\mathrm{tw}}\mathrel{\to}\id_{\mathrm{Cats}}$.

Compatibility with composition is similar to the second half of the proof of Theorem~\ref{Scompository}. In concrete terms, we need to show that the square
\[\xymatrix{
\Sp_m\coprod_{\Sp_k}\Sp_n\ar[r]^-{c_{m,k,n}}\ar[d]_{\textphnc{p}\coprod\textphnc{p}}&\Sp_{m+n-k}\ar[d]^{\textphnc{p}}\\
[m]\coprod_{[k]}[n]\ar@{=}[r]&[m+n-k]
}\]
is exact. This means that for every $(v,w)\in \Sp_{m+n-k}$ and $z\in [m+n-k]$ with $v\leq z$, the poset 
$$
\left\{(x,y)\in\Sp_m\amalg_{\Sp_k}\Sp_n\;\bigg{|}\;(v,w)\rightarrow (x,y),\;(\textphnc{p}\amalg\textphnc{p})(x,y)\rightarrow z\right\}
$$
is non-empty and connected. As in the proof of Theorem~\ref{Scompository}, a case distinction together with the consideration of the least elements shows that this is indeed true.
\end{proof}

So far, we have defined the compository of higher spans $\mathcal{S}_\sC$ only when the original category $\sC$ is gaunt. Unfortunately, any attempt at a general definition soon runs into coherence issues: uniqueness of the Kan extensions used in the compositions is lost, and the question is whether they can be chosen coherently in such a way that the compository axioms hold with equality. 

One way to achieve this may be to choose pullbacks in $\sC$ such that $a\times_b b=a$ for any diagram of the form
\be
\vxymatrix{ & b\ar@{=}[d] \\
 a\ar[r] & b }
\ee
as well as similarly $a\times_a b=b$, and $a\times_b (b\times_c d) = a\times_c d$ for any diagram of the form
\be
\vxymatrix{ && d\ar[d] \\
 a\ar[r] & b\ar[r] & c }
\ee
and similarly upon ``extending'' the vertical leg instead of the horizontal one. In particular, this implies that for any diagram of the form
\be
\vxymatrix{ a \ar[rd] && c\ar[ld]\ar[rd] && e\ar[ld] \\ & b && d }
\ee
the pullbacks strictly associate in the sense that $a\times_b (c \times_d e) = (a\times_b c)\times_c (c\times_d e) = (a\times_b c)\times_d e$.

Alternatively, one may be inclined to say that the equational axioms for compositories should only be postulated in a certain weak form, such that composition satisfies these equations only ``up to'' higher isomorphisms satisfying their own laws up to isomorphisms etc. While this certainly bears some truth, it also seems conceivable that this is but an artefact of the translation from categories to compositories, and that there is no reason from within the theory of compositories itself to weaken strict equations.

\subsection{Metric spaces}
\label{metspacesI}

We now return to the metric spaces example considered in the introduction and show how the collection of all finite metric spaces forms a compository, although with a slightly non-standard notion of ``metric''. 

Since we are not interested in distinguishing different but isomorphic metric spaces, we take the underlying set of any $(n+1)$-element metric space to be the abstract $n$-simplex $[n]=\left\{0,\ldots,n\right\}$.

For us, a \emph{metric} on $[n]$ is a function
$$
d:[n]\times[n]\longrightarrow\R_{\geq 0}
$$
%to the extended nonnegative reals $\overline{\R}_{\geq 0}=\R_{\geq 0}\cup\{\infty\}$
such that $d(x,x)=0$ for all $x\in[n]$, the triangle inequality
$$
d(x,z)\leq d(x,y) + d(y,z)
$$
holds, and also symmetry $d(x,y)=d(y,x)$. So in contrast to the standard definition, our metrics are not required to be non-degenerate: they are \emph{pseudometrics}. Moreover, everything that follows also works without the symmetry assumption, and the so inclined reader~\cite{Lawv} may safely take our notion of ``metric space'' to mean ``category enriched over the additive monoid $\R_{\geq 0}$''.

\begin{Definition}
$\mathcal{M}_1$ is the simplicial set with $n$-simplices given by the metrics on $[n]$,
\be
\mathcal{M}_1(n)\defin \left\{\: d:[n]\times [n] \longrightarrow \R_{\geq 0} \:\:\textrm{metric on }[n]\:\right\}.
\ee
For every $f\in\Delta([n],[m])$ and $A\in\mathcal{M}_1(m)$, we put
\be
\label{metricpull}
(Af)(x,y)\defin A(f(x),f(y)) .
\ee
\end{Definition}

We think of an $m$-simplex $A\in\mathcal{M}_1(m)$ as a metric space with $(m+1)$ points. Unfolding the definition then shows that the face maps correspond to all possible restrictions of the metric to an $m$-element subset, while the degeneracy maps correspond to all possible ways of duplicating a point.

A pair $(A,B)\in\mathcal{M}_1(m)\times\mathcal{M}_1(n)$ is $k$-composable if the restrictions of $A$ and $B$ to the corresponding $k$-element subsets coincide. In this case, we put:

\begin{Definition} For all $x,z\in[m+n-k]$, 
\[
(A\circ_k B)(x,z) \defin \begin{cases} A(x,z)&\text{ if }x\leq m \text{ and }z\leq m\\[3pt]
\min\limits_{y\;:\;m-k\leq y\leq m} \mleft( A(x,y) + B(y-m+k,z-m+k) \mright)& \text{ if } x\leq m \text{ and } z\geq m-k\\
B(x,z)&\text{ if } x\geq m-k\text{ and } z\geq m-k\end{cases}
\]
\end{Definition}

Although these three cases overlap, the resulting values for the left-hand side coincide thanks to the assumption of composability and the triangle inequality.

This definition implements the idea that $A\circ_k B$ represents a canonical way of joining $A$ and $B$ into a larger metric space in the sense  that the given metrics $A$ and $B$ are retained, while the ``missing'' distances are lengths of shortest paths as illustrated in Figure~\ref{shortpath}.

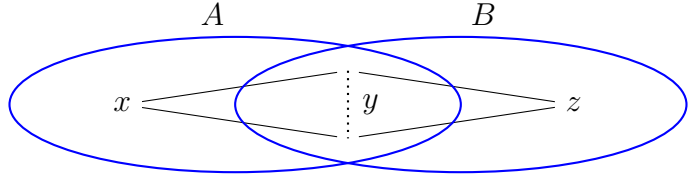
\begin{figure}\label{shortpath}
\begin{center}
\begin{tikzpicture}[scale=1.5]
\node[] (b) at (-2,0) {$x$} ;
\node[] (d) at (2,0) {$z$} ;
\node[] (f) at (0,.3) {} ;
\node[] () at (.2,0){$y$};
\node[] (a) at (0,-.3) {} ;
%\node at (0,.06) {$\vdots$} ;
\draw [dotted,thick] (0,.3)--(0,-.3);
\draw (b) -- (f) -- (d) ;
\draw (b) -- (a) -- (d) ;
\node[] (c) at (-1.2,.8) {$A$} ;
\node[] (e) at (1.2,.8) {$B$} ;
\draw[thick,blue] (-1,0) ellipse (2cm and .6cm) ;
\draw[thick,blue] (1,0) ellipse (2cm and .6cm) ;
\end{tikzpicture}
\caption{Defining the distance between $x$ and $z$ in terms of a shortest path through points in the intersection.}
\end{center}
\end{figure}

\begin{Proposition}\label{metriccomp}
With these definitions, $\mathcal{M}_1$ is a compository. 
\end{Proposition}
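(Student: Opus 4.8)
The plan is to verify the five axioms of Definition~\ref{def:comp} in turn, after first checking that $\mathcal{M}_1$ is a genuine simplicial set and that $A\circ_k B$ is a well-defined pseudometric. That $\mathcal{M}_1$ is a simplicial set is immediate from~\eqref{metricpull}: pulling a metric back along an order-preserving map preserves vanishing on the diagonal, symmetry and the triangle inequality, and contravariant functoriality $A(fg)=(Af)g$ holds on the nose. For well-definedness of $A\circ_k B$ I would check that the three overlapping cases agree on their common domains; writing $\beta(w)=w-m+k$ for the reindexing of the $B$-side, each overlap reduces to an instance of the triangle inequality for $A$ or for $B$ together with the composability identity $A(y,y')=B(\beta(y),\beta(y'))$ on the shared $k$-face, the minimum being attained at the obvious endpoint. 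The same identity also shows that $A\circ_k B$ is symmetric, so that case~2 may be read in either order of its arguments.

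The main work, and the step I expect to be the genuine obstacle, is the triangle inequality for $A\circ_k B$. I would prove it by a case distinction on the positions of the three points $x,y,z\in[m+n-k]$ relative to the overlap $\{m-k,\dots,m\}$, exploiting the ``shortest path through the overlap'' reading of the formula (Figure~\ref{shortpath}). The cases in which all three points lie on the same side ($\le m$, or $\ge m-k$) reduce directly to the triangle inequality for $A$ or for $B$. In the genuinely mixed cases --- say $x$ strictly on the $A$-side and $z$ strictly on the $B$-side --- one writes $(A\circ_k B)(x,z)=\min_{p}\bigl(A(x,p)+B(\beta(p),\beta(z))\bigr)$ and argues as follows: if the middle point $y$ lies on the $A$-side one absorbs the hop $A(x,y)$ into the minimisation using $A(x,y)+A(y,p)\ge A(x,p)$, dually if $y$ lies on the $B$-side, and if $y$ lies in the overlap one simply takes $p=y$ in the minimum. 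The content of the argument is precisely that a single crossing of the overlap is optimal, which is what couples the two separate triangle inequalities of $A$ and $B$ through the minimisation.

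With $A\circ_k B$ established as a pseudometric, the remaining axioms are comparatively formal. The identity axiom~\eqref{comp} is read off the formula: in both $As_k\circ_k A$ and $A\circ_k A t_k$ the composite has dimension $m$ and every case collapses to $A(x,z)$ (the minimisation in case~2, where present, being resolved by the triangle inequality with equality at an endpoint). For the back-and-forth axiom~\eqref{bf}, I would use that cases~1 and~3 of the definition literally say $(A\circ_k B)(x,z)=A(x,z)$ for $x,z\le m$ and $(A\circ_k B)(x,z)=B(\beta(x),\beta(z))$ for $x,z\ge m-k$; writing $C=A\circ_k B$, the recomposition $Cs_{m+i}\circ_{k+i}B$ unwinds to $\min_{m-k\le y\le m+i}\bigl(C(x,y)+C(y,z)\bigr)$ in its mixed case, which equals $C(x,z)$ because $C$ is now known to satisfy the triangle inequality (giving ``$\ge$''), while a minimiser $p$ of $C(x,z)$ itself lies in the admissible range and realises equality (giving ``$\le$'').

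Finally, I would dispose of compatibility with face and degeneracy maps, axioms~\eqref{compdeg1}--\eqref{faceT}, in one stroke, mirroring the treatment of $\mathcal{S}_\mc$ in Theorem~\ref{Scompository}. For a monotone map $\xi:[m'+n'-k']\to[m+n-k]$ satisfying the same three conditions imposed there, I claim that $(A\circ_k B)\xi=A'\circ_{k'}B'$, where $A'$ and $B'$ are the pullbacks of $A$ and $B$ along the restrictions of $\xi$ to the $A$- and $B$-sides. On the pure-$A$ and pure-$B$ cases this is immediate from~\eqref{metricpull} and condition~1, respectively condition~2; in the mixed case it comes down to the identity $\min_{m-k\le Y\le m}g(Y)=\min_{m'-k'\le y'\le m'}g(\xi(y'))$, which holds because condition~3 guarantees that $\xi$ maps $\{m'-k',\dots,m'\}$ \emph{onto} the overlap $\{m-k,\dots,m\}$, so that both minima range over exactly the same set of values. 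This is the metric-space analogue of the exact-square argument used for higher spans, with surjectivity onto the overlap playing the role of non-emptiness and connectedness. Specialising $\xi$ to the relevant face and degeneracy maps then yields each of~\eqref{compdeg1}--\eqref{faceT}.
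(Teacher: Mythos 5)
Your proof is correct, but it is not the route the paper takes: the paper defers Proposition~\ref{metriccomp} entirely, proving instead in Section~\ref{takeII} that the metric presheaf $\mathcal{M}:\Sets^{\op}\to\Sets$ is a gleaf for the bicoverings given by jointly surjective injections, and then deducing the compository structure on $\mathcal{M}_1$ by base change along $\Delta\hookrightarrow\Sets$ (Example~\ref{basegleaves}) combined with the equivalence of compositories and gleaves on $\Delta$ (Theorem~\ref{gleavescompo}). Your direct verification replaces that machinery with the exact-square-style argument for a general monotone $\xi$ satisfying conditions (a)--(c) of Theorem~\ref{Scompository}; your observation that condition (c) makes $\xi$ surjective onto the overlap, so that the two minimisations range over the same set of values, is exactly the role played by surjectivity of $q$ on $A\cap B$ in the paper's partial-naturality verification. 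The two proofs therefore perform essentially the same computations (triangle inequality by routing through the overlap; back-and-forth by combining the triangle inequality of the composite with an explicit minimiser; face/degeneracy compatibility by surjectivity onto the overlap), just packaged differently: the gleaf route buys reusability --- the same three-axiom check also delivers the $\Sets$-level gleaf and parallels the probability example --- at the cost of Lemma~\ref{complab}, while your route is self-contained and shorter if one wants only Proposition~\ref{metriccomp}.

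One step you should write out rather than wave through as ``similar'': in the triangle inequality for $A\circ_k B$, the case you detail ($x$ strictly on the $A$-side, $z$ strictly on the $B$-side) is not the only genuinely mixed configuration. The case $x,z\leq m$ with $y>m$ --- the one the paper chooses to verify explicitly in Section~\ref{takeII} --- requires combining \emph{two} minimisations: one bounds $(A\circ_k B)(x,y)+(A\circ_k B)(y,z)$ from below by
\[
\min_{p,q}\bigl(A(x,p)+B(\beta(p),\beta(y))+B(\beta(y),\beta(q))+A(q,z)\bigr),
\]
contracts the two middle terms to $B(\beta(p),\beta(q))=A(p,q)$ using the triangle inequality for $B$ and composability, and finishes with the triangle inequality for $A$. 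This is still within the toolbox you describe, but it is a distinct manipulation from absorbing a single hop into one minimisation, so it deserves its own lines in the final write-up.
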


The proof is straightforward but tedious. We will present a reasonably clean argument in Section~\ref{takeII}, once we have introduced notions which allow for a more direct proof.

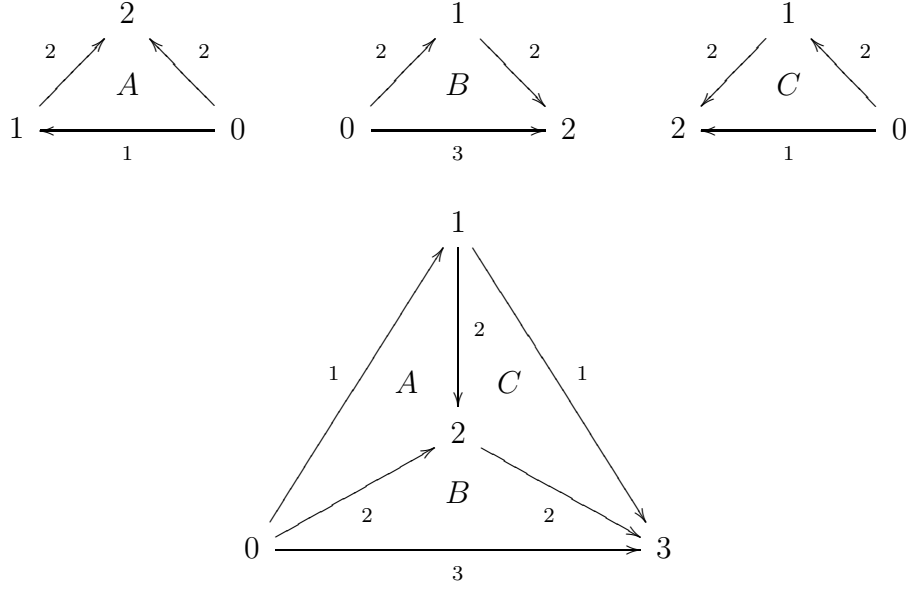
\begin{figure}
\[
 \xymatrix{ & 2\ar@{<-}[rd]^2\ar@{}[d]|(.6){\textstyle{A}} \\ 1 \ar[ur]^2 \ar@{<-}[rr]_1 && 0}\qquad
\xymatrix{ & 1\ar[rd]^2\ar@{}[d]|(.6){\textstyle{B}} \\ 0 \ar[ur]^2 \ar[rr]_3 && 2} \qquad \xymatrix{ & 1\ar@{<-}[rd]^2 \ar@{}[d]|(.6){\textstyle{C}}\\ 2 \ar@{<-}[ur]^2 \ar@{<-}[rr]_1 && 0} \]
\[
\xymatrix{ && 1\ar[dddrr]^1\ar[dd]^2 &&  \\&&&\\ && 2\ar[drr]_2\ar@{}[ul]|{\textstyle{A}}\ar@{}[ur]|{\textstyle{C}}\ar@{}[d]|{\textstyle{B}} && \\ 0\ar[uuurr]^1 \ar[urr]_2\ar[rrrr]_3 &&&& 3}
\]
\caption{Three metrics $A,B,C\in\mathcal{M}_1(2)$ with distances as indicated which assemble to an inner horn. Due to failure of the triangle inequality for the outer edges, this inner horn does not have a filler.}
\label{nofillers}
\end{figure}

\begin{Remark}
Similarly as in Example~\ref{nokan}, $\mathcal{M}_1$ is not a quasi-category: the three $2$-simplices $A,B,C\in\mathcal{M}_1(2)$ displayed in Figure~\ref{nofillers} form an inner horn in $\mathcal{M}_1$ which does not have a filler.
\end{Remark}

We will get back to the example of metric spaces in Section~\ref{takeII}, using the presheaf point of view discussed in the introduction.

\subsection{Joint probability distributions}
\label{secjpd}

We now consider the situation of joint probability distributions mentioned in the introduction. As we did there, we fix a finite set $O$ of outcomes for all our random variables. We will show that the collection of joint distributions of random variables with outcomes in $O$ form a compository denoted $\mathcal{P}_O$.

\begin{Definition}
An $n$-simplex in $\mathcal{P}_O$ is a probability distribution $P$ on $O^{n+1}=O\times\ldots\times O$ which assigns a weight $P(a_0,\ldots,a_n)$ to any $(n+1)$-tuple of outcomes $(a_0,\ldots,a_n)\in O^{n+1}$.
\end{Definition}

Hence an $n$-simplex is a joint probability distribution for $n+1$ random variables. In particular, a $0$-simplex is a probability distribution of a single variable.

The face maps are given by taking marginal distributions,
\be
(P\,\partial_k)(a_0,\ldots,a_{n-1}) \defin \sum_{a_k} P(a_0,\ldots,a_n),
\ee
while the degeneracies produce a ``copy'' of one of the variables which is perfectly correlated with the original one,
\be
(P\,\eta_k)(a_0,\ldots,a_{n+1}) \defin \delta_{a_k,a_{k+1}} \, P(a_0,\ldots,\cancel{a_{k+1}},\ldots,a_n).
\ee
These two equations can be subsumed into a single equation analogous to~\eqref{metricpull}: for any $f\in \Delta([m], [n])$, 
$$
(Pf)(a_0,\ldots,a_{m})\defin \sum_{b_0,\ldots,b_n\text{ s.t. } a_i=b_{f(i)}}P(b_0,\ldots,b_n).
$$
A simple calculation shows that this is functorial in $f$. 

Thus, a pair $(P, Q)$ consisting of an $m$-simplex $P$ and an $n$-simplex $Q$ is $k$-composable if and only if
\be
\label{probcomposable}
 \sum_{a_0,\ldots,a_{m-k-1}} P(a_0,\ldots,a_m) = \sum_{a_{m+1},\ldots,a_{m+n-k}} Q(a_{m-k},\ldots,a_{m+n-k})\qquad\forall a_{m-k},\ldots, a_m.
\ee
If this composability holds, then we abbreviate both sides of this equation by $R(a_{m-k},\ldots,a_m)$ and define
\be
\label{probcompose}
(P\circ_k Q)(a_0,\ldots,a_{m+n-k}) \defin \frac{P(a_0,\ldots,a_m)Q(a_{m-k},\ldots,a_{m+n-k})}{R(a_{m-k},\ldots,a_m)}.
\ee
In this formula, the denominator may vanish for certain tuples $(a_{m-k},\ldots,a_m)\in O^k$; however, the composability condition~\eqref{probcomposable} guarantees that this implies that also both terms in the numerator vanish. In this case, also the left-hand side of~\eqref{probcompose} is regarded to be $0$.

We need to check that \eqref{probcompose} is indeed a probability distribution. While it is clearly non-negative, normalisation can be seen as follows:
\begin{align*}
\sum_{a_0,\ldots,a_{m+n-k}}\frac{P(a_0,\ldots,a_m)Q(a_{m-k},\ldots,a_{m+n-k})}{R(a_{m-k},\ldots,a_m)}&=\sum_{a_{0},\ldots, a_m}\frac{P(a_0,\ldots,a_m)\sum_{a_{m+1},\ldots, a_{m+n-k}}Q(a_{m-k},\ldots,a_{m+n-k})}{R(a_{m-k},\ldots,a_m)}\\
&=\sum_{a_{0},\ldots, a_m}\frac{P(a_0,\ldots,a_m)R(a_{m-k},\ldots,a_m)}{R(a_{m-k},\ldots,a_m)}\\
&=\sum_{a_{0},\ldots, a_m}P(a_0,\ldots,a_m)=1.
\end{align*}
Hence~\eqref{probcompose} is an $(m+n-k)$-simplex in $\mathcal{P}_O$.

\begin{Proposition}\label{probabilities}
With these definitions, $\mathcal{P}_O$ is a compository.
\end{Proposition}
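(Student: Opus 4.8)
The plan is to extract the probabilistic content of formula~\eqref{probcompose} and phrase it as a uniqueness statement, from which all five axioms follow by comparing marginals and conditional independences. First I would record the following \emph{characterization}: whenever $(P,Q)\in\mathcal{P}_O(m)\times\mathcal{P}_O(n)$ is $k$-composable, the composite $P\circ_k Q$ is the \emph{unique} distribution $C$ on $O^{m+n-k+1}$ whose marginal on the variables $0,\ldots,m$ equals $P$, whose marginal on the variables $m-k,\ldots,m+n-k$ equals $Q$, and for which the ``left tail'' $\{0,\ldots,m-k-1\}$ and the ``right tail'' $\{m+1,\ldots,m+n-k\}$ are conditionally independent given the ``middle'' $\{m-k,\ldots,m\}$. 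Indeed, conditional independence forces $C$ to factor as $C_{\mathrm{LM}}\,C_{\mathrm{MR}}/C_M$, and the two marginal conditions identify the three factors with $P$, $Q$ and $R$, reproducing~\eqref{probcompose}; the composability condition~\eqref{probcomposable} is exactly the statement that the two prescribed marginals agree on the middle, so that the formula is well defined.

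Next I establish the \emph{source and target identities} $(P\circ_k Q)s_m=P$ and $(P\circ_k Q)t_n=Q$ by a one-line summation directly from~\eqref{probcompose} (marginalising $Q$ over the right tail cancels the denominator $R$, and symmetrically for $P$); these cannot be quoted from the derived identity~\eqref{comp2}, which already presupposes the axioms we are verifying. The \emph{identity axiom}~\eqref{comp} is then immediate: in $As_k\circ_k A$ the numerator factor $(As_k)(a_0,\dots,a_k)$ coincides with the denominator $R$ and cancels, leaving $A$ (the convention that the quotient is $0$ when $R$ vanishes is consistent, since a vanishing marginal forces the numerator to vanish as well). For the \emph{compatibility with face and degeneracy maps}~\eqref{compdeg1}--\eqref{faceT}, I argue that each structure map acts inside a single block and hence commutes with the factorisation: for $i<m-k$ the face map $\partial_i$ marginalises a left-tail variable occurring only in the factor $P$, so it passes through the product while the denominator $R$ is untouched, giving~\eqref{faceS}, and the case $i>m$ is symmetric. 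Degeneracies are handled the same way when $i\le m-k$ or $i\ge m$, the inserted Kronecker delta factoring out of the product; when $m-k\le i\le m$ the duplicated variable lies in the middle block and enlarges the shared face by one point, which accounts for the shift to $\circ_{k+1}$ in~\eqref{compdeg3} and for the overlap cases noted in the Remark following Definition~\ref{def:comp}. Throughout one checks that $k$-composability of the modified pair still holds, so that the right-hand sides are defined.

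The main obstacle is the \emph{back-and-forth axiom}~\eqref{bf}, and this is where the characterization pays off. Writing $C\defin A\circ_k B$ and $C'\defin Cs_{m+i}$, I must show $C'\circ_{k+i}B=C$. By the characterization applied to $C'\circ_{k+i}B$, it suffices to check that $C$ has $C'$ as its initial $(m+i)$-face (true by definition of $C'$), has $B$ as its terminal $n$-face (the target identity above, the relevant block being again $\{m-k,\dots,m+n-k\}$), and satisfies conditional independence of $\{0,\dots,m-k-1\}$ and $\{m+i+1,\dots,m+n-k\}$ given the \emph{enlarged} middle $\{m-k,\dots,m+i\}$. This last point is the crux: since $C$ already factors as $P\cdot Q/R$ with the left tail appearing only in $P$ and the right tail only in $Q$, fixing any superset of the original middle still separates the two tails into disjoint factors, so the enlarged conditional independence holds automatically. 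Uniqueness then forces $C'\circ_{k+i}B=C$; the second equation in~\eqref{bf} is symmetric. Associativity and the source/target conditions require no separate treatment here, being derived consequences of the axioms just verified.
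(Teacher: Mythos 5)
Your proof is correct, but it follows a genuinely different route from the paper's. The paper does not verify the compository axioms on $\Delta$ directly: it defers the proof of Proposition~\ref{probabilities} to Section~\ref{take2}, where it constructs the gleaf $\mathcal{P}:\FinSets^{\op}\to\Sets$, verifies the identity, back-and-forth and partial naturality axioms by explicit manipulation of the sums defining marginals, and then deduces the proposition by base change (Example~\ref{basegleaves}) along the inclusion $\Delta\hookrightarrow\FinSets$ combined with the equivalence of Theorem~\ref{gleavescompo}; in that route all five face/degeneracy compatibilities~\eqref{compdeg1}--\eqref{faceT} are subsumed at once under partial naturality via the factorisation of Lemma~\ref{complab}. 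You instead stay inside the compository formalism and replace the paper's index computation for the back-and-forth axiom by the uniqueness characterization of $P\circ_k Q$ through its two marginals plus conditional independence of the tails given the middle --- a fact the paper states after~\eqref{probcompose} (``the unique probability distribution which makes the variables in $A$ conditionally independent of those in $B$ given $A\cap B$'') but never exploits in its proofs. Your key observation, that conditioning on any superset of the original middle block still separates the left tail (appearing only in $P$) from the right tail (appearing only in $Q$), is exactly the right reason why~\eqref{bf} holds, and your uniqueness argument is sound including the degenerate fibres, where $C(\mu)=0$ forces the whole slice to vanish. What the paper's route buys is the stronger statement that $\mathcal{P}$ is a gleaf over all of $\FinSets$ (hence, e.g., supports the symmetric-group actions mentioned in Section~\ref{further}); what yours buys is a self-contained and conceptually transparent proof of the proposition as stated, at the modest cost of checking by hand the composability of the modified pairs and the block-wise commutation with face and degeneracy maps --- steps you only sketch, and in the case $m-k\leq i\leq m$ of~\eqref{compdeg3} it is worth saying explicitly that the inserted Kronecker delta appears in the numerator (twice) and in the denominator and cancels under the $0/0$ convention.
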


For similar reasons as for the proof of Proposition~\ref{metriccomp}, also the proof of this proposition will be deferred to Section~\ref{take2}.

\section{Gleaves}

As explained in the introduction, there are various natural examples of presheaves that are not sheaves, but still seem to possess more interesting structure than merely being presheaves: there exists a ``gluing operation'' which describes a canonical way of joining pairs of compatible local sections. We axiomatise the resulting notion of \emph{gleaf} in this section and show how compositories can be seen as particular kinds of gleaves. Both the metric space example and the joint probability distributions example given in the previous section have several variants all of which can be described as gleaves. 

\subsection{Gleaves on a lattice}\label{gleaveslattice}

When introducing sheaves, one usually starts by defining sheaves on topological spaces before moving on to the general definition of sheaves on sites. In a similar manner, we start with the definition of gleaves on the lattice of opens of a topological space, or, more generally, on any distributive lattice. Since any distributive lattice can be represented as a lattice of sets~\cite{stone}, we may consider, without loss of generality, a lattice of subsets $L\subseteq 2^X$ of some set $X$. With this in mind, we denote the lattice ordering by $\subseteq$ and the lattice operations by $\cap$ and $\cup$.

\begin{Definition}\label{littlegleaf}
Given a distributive lattice $L$, a \emph{gleaf} on $L$ is a presheaf $\Gamma:L^\op\to\Sets$ together with a \emph{gluing operation}
\be
\label{glop}
\xymatrix{ g_{U,V} : \Gamma(U)\times_{\Gamma(U\cap V)}\Gamma(V) \ar[r] & \Gamma(U\cup V) }
\ee
for every unordered pair $U,V\in L$, such that the following conditions hold:
\begin{enumerate}
\item \label{littlega} if $U\subseteq V$, then $g_{U,V}(\alpha,\alpha|_{V})=\alpha$ for all $\alpha\in\Gamma(U)$.
\item \label{littlegb} For $U',U,V\in L$ with $U'\subseteq U$ and $U'\cup V=U\cup V$,
\be
\vxymatrix{ \Gamma(U')\times_{\Gamma(U'\cap V)}\Gamma(V) \ar[rr]^(.6){g_{U',V}} \ar[dd]_{g_{U',V}} && \Gamma(U\cup V) \\\\
\Gamma(U\cup V) \ar[rr] && \Gamma(U)\times_{\Gamma(U\cap V)}\Gamma(V) \ar[uu]_{g_{U,V}} 
}\ee
\item \label{littlegc} For $U',U,V\in L$ with $U'\subseteq U$ and $U'\cap V=U\cap V$,
\be
\vxymatrix{ \Gamma(U) \times_{\Gamma(U\cap V)}\Gamma(V) \ar[dd]\ar[rr]^(.6){g_{U,V}} && \Gamma(U\cup V) \ar[dd]\\\\
\Gamma(U')\times_{\Gamma(U\cap V)} \Gamma(V) \ar[rr]_(.6){g_{U',V}} && \Gamma(U'\cup V)
}\ee
commutes.
\end{enumerate}
\end{Definition}

The axioms here are parallel to those of Definition~\ref{def:comp}. They axioms have a multitude of consequences which, however, will not be analysed in this section but rather when considering the definition of gleaves on a category. This includes, for example, associativity of the gluing operation.

\begin{Remark}
	These axioms are essentially equivalent to those of Dawid and Studen\'y for \emph{conditional products}~\cite{DS}. To wit, their \emph{projection} corresponds to restriction in the presheaf; their $\otimes$ is our gluing, guaranteed by \textbf{T1} to of type~\eqref{glop}; their \textbf{T2} is automatic in our setup, due to the use of \emph{unordered} pairs $U,V$; their \textbf{T3} is our~\ref{littlega}; their \textbf{T4} is our~\ref{littlegc}; their \textbf{T5} is our~\ref{littlegb}. The definition of~\cite{DS} is slightly more general in that the gluing operation may be defined only on a proper subset of all compatible pairs of local sections; this is what their \textbf{T6} is concerned with.
\end{Remark}

In the case of a sheaf, the canonical choice for $L$ is the lattice of opens $\mathcal{O}(X)$ of a topological space $X$. However, we do not see why this should necessarily likewise apply to gleaves, and in fact believe that other choices are sometimes more natural:

\begin{Example}
Let $L$ be the lattice of \emph{compact} subspaces of a Hausdorff space $X$, and for $U\in L$ let $\Gamma(U)$ be the set of all \emph{non-degenerate} metrics $U\times U\rightarrow \R_{\geq 0}\cup \{\infty\}$ which induce the given subspace topology. $\Gamma$ is a presheaf with the obvious restriction maps. For $d_U\in\Gamma(U)$ and $d_V\in\Gamma(V)$, we construct $d_{U\cup V} = g_{U,V}(d_U,d_V)$ by
\be\label{metric}
d_{U\cup V}(x,z) \defin \begin{cases}
d_U(x,z) & \text{if }x\in U,\:z\in U,\\
\inf_{y\in U\cap V}\mleft[d_U(x,y)+d_V(y,z)\mright] & \text{if }x\in U,\:z\in V,\\
\inf_{y\in U\cap V}\mleft[d_V(x,y)+d_U(y,z)\mright] & \text{if }x\in V,\:z\in U,\\
d_V(x,z) & \text{if }x\in V,\:z\in V. \end{cases}
\ee
Here, compactness guarantees that these infima are attained at some $y$ if $U\cap V\neq\emptyset$. Hence $d_{U\cup V}$ is a metric as well, i.e.~it is non-degenerate. It can also be shown that $d_{U\cup V}$ induces the given subspace topology on $U\cup V$, which implies that $d_{U\cup V}\in\Gamma(U\cup V)$.

Without any compactness requirement, the infimum in~\eqref{metric} is not necessarily attained at any point in the intersection. Take for example the set 
$$
X=\{u, v\}\cup \Nl
$$
with two distinct subsets
$$
U=\{u\}\cup \Nl,\qquad V=\{v\}\cup \Nl
$$
equipped with metrics $d_U$ and $d_V$ satisfying
\[
d_U(u,n)=\frac{1}{n},\qquad d_V(v,n)=\frac{1}{n}\quad\forall \;n\in\Nl,
\]
and otherwise arbitrary distances. Applying definition \eqref{metric} to $u$ and $v$ results in
\[
d_{U\cup V}(u,v)=\inf_{n\in\N}\mleft(d_U(u,n)+d_V(n,v)\mright)=\inf_{n\in\N}\mleft(\tfrac{1}{n}+\tfrac{1}{n}\mright)=0.
\]
Thus, the two distinct points $u$ and $v$ have zero distance.
\end{Example}

\subsection{Systems of bicoverings}
\label{bicoverings}

We would like to give a more general definition of gleaves which comprises both gleaves on lattices and compositories as special cases. 
To this end, we introduce the notion of a \emph{system of bicoverings} which are to gleaves what Grothendieck topologies are to sheaves.

\begin{Definition}
A \emph{system of bicoverings} on a category $\sC$ is a collection of cospans in $\sC$, called \emph{bicoverings}, which we draw with one vertical and one horizontal leg 
\[\xymatrix{
&b\ar[d]\\
a\ar[r]&c
}
\]
satisfying the following axioms:
\begin{enumerate}
\item \label{mono} The two legs of a bicovering are monomorphisms. 
\item Bicoverings can be completed to pullback squares.
\item \label{max}\emph{Maximal bicoverings:} for every $a\in\sC$, 
\[\xymatrix{
&a\ar@{=}[d]\\
a\ar@{=}[r]&a}\] is a bicovering.
\item\label{stabcomp} \emph{Stability under composition:}
given a diagram 
\[\xymatrix{
&a\times_c b\ar[r]\ar[d]&b\ar[d]\\
a'\ar[r]&a\ar[r]&c}
\]
in which both cospans are bicoverings, then so is the composed cospan.
\item \label{stabpb}
\emph{Stability under pullbacks:}
given a diagram
\[\xymatrix{
&&f^*(b)\ar[dr]\ar[dd]&\\
&&&b\ar[dd]\\
f^*(a)\ar[dr]\ar[rr]&&c'\ar[dr]|f&\\
&a\ar[rr]&&c}
\]
where both squares are pullbacks and the lower right cospan is a bicovering, then so is the upper left cospan. 
\end{enumerate}
\end{Definition}

Here, we do not distinguish the two legs of a cospan, i.e.~swapping the vertical and horizontal legs results in the same cospan. We do not assume that $\sC$ has all pullbacks; correspondingly, axiom~\ref{stabpb} only applies when the pullbacks exist.

\begin{Example}\label{sitebicovering}
Let $(\sC,J)$ be a site, where $\sC$ is a category with pullbacks. Then we define a cospan with monomorphic legs to be a bicovering if the sieve generated by it is a covering sieve. Axiom~\ref{mono} is satisfied by assumption. Axiom~\ref{max} holds since principal sieves are covering sieves. Axiom~\ref{stabcomp} follows from the transitivity axiom of Grothendieck topologies. Axiom~\ref{stabpb} follows from the stability axiom of Grothendieck topologies.
\end{Example}

\begin{Example}\label{examplepullback}
In the simplex category $\Delta$, we take a bicovering to be a cospan of the form 
\be\label{bicomp}\vxymatrix{
&[n]\ar[d]^{t_n}\\
[m]\ar[r]_{s_m}&[j]}
\ee
where $n+m\geq j$, corresponding to joint surjectivity of the two arrows. 
%Exchanging the two legs of a bicovering does not yield a bicovering in general.
\end{Example}

\begin{Example}
\label{exadh}
Let $\sC$ be an adhesive category~\cite{adhesive}. Define a cospan with monomorphic legs to be a bicovering if its pullback square is also a pushout. Then Axioms~\ref{max} and~\ref{stabcomp} are by general properties of pullbacks and pushouts, while Axiom~\ref{stabpb} holds since pullbacks of monomorphisms are monomorphisms, and pushouts along monomorphisms in an adhesive category are van Kampen squares.
\end{Example}

Our first observation is a partial converse to Axiom~\ref{stabcomp}:

\begin{Lemma}
In a diagram 
\[\xymatrix{
&a\times_c b\ar[r]\ar[d]&b\ar[d]\\
a'\ar[r]&a\ar[r]^f&c}
\]
where the right and the composed cospan are bicoverings, then so is the left cospan.
\end{Lemma}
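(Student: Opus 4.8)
The plan is to realize the left cospan as a pullback of the composed bicovering along $f$, and then to invoke stability under pullbacks (Axiom~\ref{stabpb}). Write $e\colon a'\to a$ for the bottom-left arrow, so that the horizontal leg of the composed bicovering is the composite $a'\xrightarrow{e}a\xrightarrow{f}c$, while $a\times_c b$ is the pullback completing the right bicovering.

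First I would record that $f$ is a monomorphism, being the horizontal leg of the right cospan, which is a bicovering (Axiom~\ref{mono}). The key observation is then the identification $a\times_c a'\cong a'$, where $a\times_c a'$ denotes the pullback of $f\colon a\to c$ against the composed leg $fe\colon a'\to c$. Indeed, since $fe$ factors through $f$ and $f$ is mono, the object $a'$ equipped with $e\colon a'\to a$ and $\id\colon a'\to a'$ satisfies the universal property of this pullback: given a cone $(u\colon T\to a,\ v\colon T\to a')$ with $fu=fev$, monicity of $f$ forces $u=ev$, so $v$ is the unique mediating morphism. Under this identification the projection $a\times_c a'\to a$ becomes $e$.

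With this in hand, pulling back the composed bicovering $a'\to c\leftarrow b$ along $f\colon a\to c$ reproduces precisely the left cospan: the vertical leg $b\to c$ pulls back to the projection $a\times_c b\to a$ already present in the diagram, while the horizontal leg $fe$ pulls back to $a\times_c a'\cong a'$ with induced map $e\colon a'\to a$. All the pullbacks involved exist ($a\times_c b$ by hypothesis and $a\times_c a'\cong a'$ trivially), so since the composed cospan is a bicovering, Axiom~\ref{stabpb} shows that its pullback along $f$ --- that is, the left cospan $a'\to a\leftarrow a\times_c b$ --- is itself a bicovering.

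The only step with any content is the identification $a\times_c a'\cong a'$, where monicity of $f$ (guaranteed by the right cospan being a bicovering) is essential; everything else is a matter of matching the two legs of the pulled-back cospan with those of the left cospan, so I expect no real obstacle beyond this bookkeeping.
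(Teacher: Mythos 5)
Your argument is correct and is essentially the paper's own proof: the paper likewise observes that the square with $\id_{a'}$ on one side and $f$ on the other is a pullback because $f$ is a monomorphism (i.e.\ $a\times_c a'\cong a'$), and then concludes by stability of bicoverings under pullback along $f$. The only difference is that you spell out the verification of the pullback square's universal property, which the paper leaves implicit.
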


\begin{proof}
The left square of the diagram
\[\xymatrix@-=16pt{
&&a\times_c b\ar[dd]\ar[dr]&\\
&&&b\ar[dd]\\
a'\ar[rr]\ar@{=}[dr]&&a\ar[dr]|f&\\
&a'\ar[rr]&&c
}\]
is a pullback since $f$ is a monomorphism. The claim now follows from stability under pullback.
\end{proof}

\begin{Definition}
\label{morphbicov}
A \emph{morphism of bicoverings} is a diagram of the form
\[\xymatrix{
&&b'\ar[dr]^{q_b}\ar[dd]&\\
&&&b\ar[dd]\\
a'\ar[rr]\ar[dr]_{q_a}&&c'\ar[dr]|*+<4pt>{\scriptstyle{{q_c}}}&\\
&a\ar[rr]&&c
}\]
which can be completed to a diagram of the form 
\be\label{composition}\vxymatrix{
&&b'\ar[dr]^{q_b}\ar'[d][dd]&\\
&a\times_cb\ar[dd]\ar@/^4pt/[ur]\ar@/_4pt/[dl]\ar[rr]&&b\ar[dd]\\
a'\ar'[r][rr]\ar[dr]_{q_a}&&c'\ar[dr]|*+<4pt>{\scriptstyle{{q_c}}}&\\
&a\ar[rr]&&c
}\ee
\end{Definition}

The map $q_c$ completely determines the maps $q_a$ and $q_b$, since the legs of a bicovering are monomorphisms.
The existence of the two additional arrows in~\eqref{composition} is equivalent to the requirement that the induced map $a'\times_{c'} b'\rightarrow a\times_c b$ has a right inverse.

\subsection{Gleaves on a category with bicoverings}
\label{categorybicoverings}
Let $\sC$ be a category equipped with a system of bicoverings and $\mathsf{D}$ a category with pullbacks. For any $\mathsf{D}$-valued presheaf $\Gamma:\sC^{\op}\to\mathsf{D}$ and any pullback square
\[
\xymatrix{ a\times_c b\ar[r]\ar[d] & b\ar[d] \\
a\ar[r] & c}
\]
the restriction maps determine a natural arrow
\[
\Gamma(c) \longrightarrow \Gamma(a)\times_{\Gamma(a\times_c b)}\Gamma(b) .
\]
The basic idea behind gleaves is the existence of a ``gluing operation'' which constructs a ``glued'' local section in $\Gamma(c)$ from a compatible pair of local sections in $\Gamma(a)\times_{\Gamma(a\times_c b)}\Gamma(b)$.

Unlike in Section~\ref{gleaveslattice}, we now omit the subscript of a gluing operation indexing its components.

\begin{Definition}
\label{defbiggleaf}
A \emph{gleaf} on $\sC$ with values in $\mathsf{D}$ is a pair $(\Gamma,g)$ consisting of a functor $\Gamma:\sC^{\op}\to\mathsf{D}$ together with a \emph{gluing operation}
\[\xymatrix{ 
g \: : \: \Gamma(a)\times_{\Gamma(a\times_c b)} \Gamma(b) \ar[r] & \Gamma(c)
}\]
for every bicovering 
\be\label{bicovering}
\vxymatrix{
&b\ar[d]\\
a\ar[r]&c}
\ee
satisfying the following conditions:
\begin{enumerate}
\item\label{identityax} \emph{Identity axiom:}
if the bicovering is of the form
\[\vxymatrix{
&b\ar[d]\\
a\ar@{=}[r]&a}\qquad\text{ or }\qquad\vxymatrix{
&b\ar@{=}[d]\\
a\ar[r]&b}
\]
then
$
g=\pi_1:\Gamma(a)\times_{\Gamma(a)}\Gamma(b)\longrightarrow \Gamma(a)
$ or
$
g=\pi_2:\Gamma(a)\times_{\Gamma(b)}\Gamma(b)\longrightarrow \Gamma(b)
$,
respectively.

\item\label{BFax} \emph{Back-and-forth axiom:} 
given a diagram
\[\xymatrix{
&a\times_c b\ar[r]\ar[d]&b\ar[d]\\
a'\ar[r]&a\ar[r]&c}
\]
where both cospans are bicoverings, then the diagram 
\[\xymatrix{
\Gamma(a')\times_{\Gamma(a'\times_c b)}\Gamma(b)\ar[r]^(.6)g\ar[d]_g&\Gamma(c)\\
\Gamma(c)\ar[r]&\Gamma(a)\times_{\Gamma(a\times_c b)}\Gamma(b)\ar[u]_g
}\]
commutes. 
\item \label{natural}\emph{Partial naturality axiom:}
For any morphism of bicoverings 
\be\label{naturality}\vxymatrix{
&&b'\ar[dr]\ar'[d][dd]&\\
&a\times_cb\ar[dd]\ar@/^4pt/[ur]\ar@/_4pt/[dl]\ar[rr]&&b\ar[dd]\\
a'\ar'[r][rr]\ar[dr]&&c'\ar[dr]&\\
&a\ar[rr]&&c
}\ee
the induced diagram
\[\xymatrix{
\Gamma(a)\times_{\Gamma(a\times_c b)}\Gamma(b)\ar[rr]^-{g}\ar[dd]&&\Gamma(c)\ar[dd]\\\\
\Gamma(a')\times_{\Gamma(a'\times_{c'}b')}\Gamma(b')\ar[rr]^-{g}&&\Gamma(c')
}\]
commutes.
\end{enumerate}
\end{Definition}

The following development is completely parallel to that of Section~\ref{compositoryintro} on compositories. Our first observation is that restricting the gluing of two local sections recovers these original sections:

\begin{Lemma}
\label{recover}
For any bicovering \eqref{bicovering}, the induced diagram 
\be\label{identity}
\vxymatrix{
&\Gamma(a)&\\
\Gamma(a)\times_{\Gamma(a\times_c b)}\Gamma(b)\ar[rr]|-{g}\ar[ur]^-{\pi_1}\ar[dr]_-{\pi_2}&&\Gamma(c)\ar[dl]\ar[ul]\\
&\Gamma(b)&
}\ee
commutes.
\end{Lemma}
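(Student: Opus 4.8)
The plan is to derive both commuting triangles of~\eqref{identity} from the partial naturality axiom (item~\ref{natural} of Definition~\ref{defbiggleaf}) by comparing the given bicovering $a\to c\leftarrow b$ with two auxiliary bicoverings, each carrying an identity leg, whose gluing operations are then forced to be projections by the identity axiom (item~\ref{identityax}). I will establish $\pi_2=(\Gamma(c)\to\Gamma(b))\circ g$, i.e.\ the lower triangle; the upper triangle for $\pi_1$ is entirely symmetric, obtained by interchanging the roles of the horizontal and vertical legs.

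First I would manufacture the relevant auxiliary bicovering. Pulling back the bicovering $a\to c\leftarrow b$ along its own vertical leg $b\to c$, stability under pullbacks (axiom~\ref{stabpb}) shows that the cospan $a\times_c b\to b\leftarrow b\times_c b$ is again a bicovering. Since the leg $b\to c$ is a monomorphism (axiom~\ref{mono}), the pullback $b\times_c b$ may be taken to be $b$ itself with both projections equal to $\id_b$, so this auxiliary bicovering is literally
\[
\vxymatrix{ & b\ar@{=}[d] \\ a\times_c b\ar[r] & b }
\]
whose gluing operation is $\pi_2$ by the identity axiom. Next I would exhibit a morphism of bicoverings (Definition~\ref{morphbicov}) from this auxiliary bicovering into $a\to c\leftarrow b$, given by $q_a=\mathrm{pr}_a\colon a\times_c b\to a$, by $q_b=\id_b$, and by $q_c=(b\to c)$. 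The completion demanded by~\eqref{composition} is furnished by the maps $\id\colon a\times_c b\to a\times_c b$ and $\mathrm{pr}_b\colon a\times_c b\to b$ out of the pullback of the larger bicovering; equivalently, the induced comparison $(a\times_c b)\times_b b\to a\times_c b$ is an isomorphism and hence admits a right inverse. Checking that the requisite triangles commute is routine.

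Finally I would feed this morphism of bicoverings into the partial naturality axiom. In the resulting square the top horizontal map is $g$, the right vertical map is the restriction $\Gamma(c)\to\Gamma(b)$ along $q_c$, the bottom horizontal map is the auxiliary gluing $\pi_2$, and the left vertical map sends $(\alpha,\beta)$ to $(\alpha_{|a\times_c b},\beta)$. Commutativity of the square then reads
\[
(\Gamma(c)\to\Gamma(b))\circ g\,(\alpha,\beta)=\pi_2(\alpha_{|a\times_c b},\beta)=\beta=\pi_2(\alpha,\beta),
\]
which is precisely the lower triangle of~\eqref{identity}. Repeating the argument with the pullback taken along the horizontal leg $a\to c$ instead produces the auxiliary bicovering $a\xleftarrow{=}a\leftarrow a\times_c b$ with an identity horizontal leg, for which the identity axiom forces the gluing to be $\pi_1$, and naturality yields the upper triangle.

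The main obstacle will be the bookkeeping in the middle step rather than any conceptual difficulty: one must confirm that the chosen cospan with an identity leg is genuinely a member of the system of bicoverings (this is exactly where monicity of the legs enters, through $b\times_c b\cong b$), and that the data $(q_a,q_b,q_c)$ really constitute a morphism of bicoverings in the precise sense of Definition~\ref{morphbicov}. Once these two verifications are in place, the partial naturality square collapses the statement directly onto the identity axiom.
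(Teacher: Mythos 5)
Your proposal is correct and takes essentially the same route as the paper's own proof: both construct an auxiliary bicovering with an identity leg (obtained by pulling back the given bicovering along one of its own monic legs), identify its gluing with a projection via the identity axiom, and then transport this through the partial naturality axiom applied to the evident morphism of bicoverings. The only cosmetic difference is that you treat the $\pi_2$ triangle explicitly (pulling back along $b\to c$) where the paper treats the $\pi_1$ triangle (pulling back along $a\to c$), each deferring the other case to symmetry.
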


\begin{proof}
Since the map $a\rightarrow c$ is a monomorphism, both squares in
\[\xymatrix@-=16pt{
&&a\times_c b\ar[dd]\ar[dr]\\
&&&b\ar[dd]\\
a\ar@{=}[dr]\ar@{=}[rr]&&a\ar[dr]&\\
&a\ar[rr]&&c
}\]
are pullback squares, and hence the back cospan is a bicovering as well. Since the induced map between the resulting pullbacks is $\id_{a\times_c b}$, we are dealing with a morphism of bicoverings. Hence the partial naturality axiom applies and we obtain
\[\xymatrix@-=16pt{
\Gamma(a)\times_{\Gamma(a\times_c b)}\Gamma(b)\ar[rr]^-{g}\ar[dd]&&\Gamma(c)\ar[dd]\\\\
\Gamma(a)\times_{\Gamma(a\times_{c}b)}\Gamma(a\times_c b)\ar[rr]^-{g}&&\Gamma(a)
}\]
By the identity axiom, the lower horizontal arrow $g$ coincides with $\pi_1$. Therefore also the lower composition coincides with the projection $\pi_1$. Comparing this with the upper composition results in the upper triangle of diagram \eqref{identity}. Commutativity of the lower triangle is proven in an analogous way.
\end{proof}

\begin{Lemma}[two-step rule]\label{2step}
Given a diagram
\[\xymatrix{
&&b'\ar[d]\\
&a\times_c b\ar[r]\ar[d]&b\ar[d]\\
a'\ar[r]&a\ar[r]&c
}\]
where all three cospans are bicoverings, then the induced diagram
\be\label{2stepgleaf}\vcenter{\vbox{\xymatrix@R=.5cm{
\Gamma(a')\times_{\Gamma(a'\times_c b)}\Gamma(b)\ar[rr]\ar[dd]_g&&\Gamma(a')\times_{\Gamma(a'\times_c b)}\Gamma(a\times_c b)\times_{\Gamma(a'\times_c b)}\Gamma(b)\ar[dd]^{g\times_{\id}\id}&&\\\\
\Gamma(c)&&\Gamma(a)\times_{\Gamma(a\times_c b)}\Gamma(b)\ar[ll]|*+<4pt>{\scriptstyle{{g}}}\\\\
\Gamma(a)\times_{\Gamma(a\times_c b')}\Gamma(b')\ar[uu]^g\ar[rr]&&\Gamma(a)\times_{\Gamma(a\times_c b')}\Gamma(a\times_c b)\times_{\Gamma(a\times_c b')}\Gamma(b')\ar[uu]_{\id\times_{\id}g}
}}}\ee
commutes. 
\end{Lemma}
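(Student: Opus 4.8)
The plan is to establish the two halves of the hexagon \eqref{2stepgleaf} separately. The upper half---the square running through $\Gamma(a')\times_{\Gamma(a'\times_c b)}\Gamma(b)$, its triple-product refinement, $\Gamma(a)\times_{\Gamma(a\times_c b)}\Gamma(b)$ and $\Gamma(c)$---I would obtain by pasting together one instance each of the back-and-forth axiom (Axiom \ref{BFax}), the partial naturality axiom (Axiom \ref{natural}), and the recovery Lemma \ref{recover}. The lower half is entirely analogous, using instead the ``vertical'' versions of the composition and back-and-forth axioms, i.e.\ those phrased for an additional arrow $b'\to b$ rather than $a'\to a$; so I would only spell out the upper half.

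The first ingredient is that, by stability under composition applied to the left cospan $a'\to a\leftarrow a\times_c b$ and the bottom cospan $a\to c\leftarrow b$, the composite cospan $a'\to c\leftarrow b$ is again a bicovering, with pullback $a'\times_c b$; this is precisely the bicovering underlying the direct gluing $g\colon\Gamma(a')\times_{\Gamma(a'\times_c b)}\Gamma(b)\to\Gamma(c)$. Writing $s$ for the value of this gluing on a compatible pair, the back-and-forth axiom for the factorization
\[\xymatrix{
&a\times_c b\ar[r]\ar[d]&b\ar[d]\\
a'\ar[r]&a\ar[r]&c
}\]
identifies $s$ with the result of restricting $s$ to $\Gamma(a)\times_{\Gamma(a\times_c b)}\Gamma(b)$ and re-gluing over the base bicovering $a\to c\leftarrow b$. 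The recovery Lemma \ref{recover}, applied to the composite bicovering, then shows that the second restriction component of $s$ is just the original section in $\Gamma(b)$, so that it matches the unchanged factor in the triple product.

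The crux is the identification of the first restriction component, i.e.\ that restricting $s$ to $\Gamma(a)$ equals the inner gluing over the left bicovering $a'\to a\leftarrow a\times_c b$. To see this I would exhibit a morphism of bicoverings (Definition \ref{morphbicov}) from the left bicovering to the composite bicovering $a'\to c\leftarrow b$, with legs $q_a=\id_{a'}$, the projection $q_b\colon a\times_c b\to b$, and the base leg $q_c\colon a\to c$. The completion condition of Definition \ref{morphbicov} holds because the induced comparison between the two pullbacks is the canonical identification $a'\times_a(a\times_c b)\cong a'\times_c b$, which is invertible and in particular admits a right inverse. Partial naturality applied to this morphism yields exactly
\[\xymatrix{
\Gamma(a')\times_{\Gamma(a'\times_c b)}\Gamma(b)\ar[rr]^-{g}\ar[dd]&&\Gamma(c)\ar[dd]\\\\
\Gamma(a')\times_{\Gamma(a'\times_c b)}\Gamma(a\times_c b)\ar[rr]^-{g}&&\Gamma(a)
}\]
whose commutativity is the desired identity. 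Feeding both restriction components back into the back-and-forth identity then rewrites $s$ as the composite $\Gamma(a')\times_{\Gamma(a'\times_c b)}\Gamma(b)\to(\text{triple product})\xrightarrow{g\times_{\id}\id}\Gamma(a)\times_{\Gamma(a\times_c b)}\Gamma(b)\xrightarrow{g}\Gamma(c)$, which is the commutativity of the upper half.

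I expect the main obstacle to be the bookkeeping around this morphism of bicoverings: one must check that all the pullbacks are identified correctly, so that the partial naturality square is literally the square appearing inside \eqref{2stepgleaf} (with the vertical legs being the expected restriction maps), and that the triple fibre products in the top and bottom rows are genuinely the domains on which $g\times_{\id}\id$ and $\id\times_{\id}g$ are defined---which again rests on Lemma \ref{recover} to guarantee that the relevant pairs of sections are compatible. Once the diagrams are set up correctly, the remainder is a routine paste of commuting cells in $\mathbf{D}$.
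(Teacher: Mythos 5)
Your proposal is correct and follows essentially the same route as the paper's proof: reduce to the upper half by the leg-swapping symmetry, apply the back-and-forth axiom to the composite bicovering $a'\to c\leftarrow b$, identify the $\Gamma(b)$-component of the restriction via Lemma~\ref{recover}, and identify the $\Gamma(a)$-component via partial naturality applied to exactly the morphism of bicoverings you describe (with $q_a=\id_{a'}$, $q_b$ the projection $a\times_c b\to b$, $q_c\colon a\to c$, the completion condition holding because $a\to c$ is monic so the induced map of pullbacks is $\id_{a'\times_c b}$). The paper organizes the final paste as two commuting triangles, but the content is identical to yours.
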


Intuitively, the upper half of this diagram states that a local section over $a'$ can be glued with a local section over $b$ by first gluing the former with the restriction of the latter to $a\times_c b$ and then gluing the result with the original section over $b$. A similar interpretation applies to the lower half of the diagram. 

\begin{proof}
Since the two parts of the diagram~\eqref{2stepgleaf} are equivalent under exchanging the two legs of all bicoverings, it is sufficient to prove commutativity of the upper half only. In the diagram
\[\xymatrix@=16pt{
&&a\times_c b\ar[dd]\ar[dr]\\
&&&b\ar[dd]\\
a'\ar@{=}[dr]\ar[rr]&&a\ar[dr]&\\
&a'\ar[rr]&&c
}\]
both squares are pullbacks since the map $a\rightarrow c$ is a monomorphism. 
Since the induced map between the resulting pullbacks is $\id_{a'\times_cb}$, we are dealing with a morphism of bicoverings.
Applying partial naturality gives 
\[\xymatrix{
\Gamma(a')\times_{\Gamma(a'\times_c b)}\Gamma(b)\ar[rr]^-g\ar[dd]&&\Gamma(c)\ar[dd]\\\\
\Gamma(a')\times_{\Gamma(a'\times_{c}b)}\Gamma(a\times_c b)\ar[rr]^-g&&\Gamma(a)
}\]
In the diagram
\[\xymatrix@!@R=-5cm@C=-3cm{
\Gamma(a')\times_{\Gamma(a'\times_c b)}\Gamma(b)\ar[rr]^-g\ar[dd]\ar[dr]|*+<4pt>{\scriptstyle{{g}}}&&\Gamma(c)\\
&\Gamma(c)\ar[dr]&\\
\Gamma(a')\times_{\Gamma(a'\times_{c}b)}\Gamma(a\times_c b)\times_{\Gamma(a\times_c b)}\Gamma(b)\ar[rr]^-{g\times_\id\id}&&\Gamma(a)\times_{\Gamma(a\times_c b)}\Gamma(b)\ar[uu]_-g
}\]
the upper triangle commutes because of the back-and-forth axiom, while the lower triangle commutes as a consequence of the partial naturality diagram above and~\eqref{identity}. 
\end{proof}

\begin{Proposition}[Associativity]
\label{assocprop}
Given a diagram
\[\xymatrix{
&&b'\ar[d]\\
&a\times_c b\ar[d]\ar[r]&b\ar[d]\\
a'\ar[r]&a\ar[r]&c
}\]
where all three cospans are bicoverings, then the induced diagram
\be\label{assoc}
\vxymatrix{
\Gamma(a')\times_{\Gamma(a'\times_c b)} \Gamma(a\times_c b)\times_{\Gamma(a\times_c b')} \Gamma(b') \ar[dd]_-{\id\times_{\id}g} \ar[rr]^-{g\times_{\id}\id}&& \Gamma(a)\times_{\Gamma(a\times_c b')}\Gamma(b')\ar[dd]^-g\\\\
\Gamma(a')\times_{\Gamma(a'\times_c b)} \Gamma(b)\ar[rr]^-g && \Gamma(c) }
\ee
commutes.
\end{Proposition}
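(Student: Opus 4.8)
The plan is to prove commutativity of~\eqref{assoc} by exhibiting a single map through which both composites factor. Writing $Z\defin\Gamma(a)\times_{\Gamma(a\times_c b)}\Gamma(b)$ and letting $g\colon Z\to\Gamma(c)$ denote the gluing for the bicovering $a\to c\leftarrow b$, I would first construct a map $G$ from the top-left vertex of~\eqref{assoc} to $Z$. Its first component is the projection onto $\Gamma(a')\times_{\Gamma(a'\times_c b)}\Gamma(a\times_c b)$ followed by the gluing into $\Gamma(a)$; its second component is the projection onto $\Gamma(a\times_c b)\times_{\Gamma(a\times_c b')}\Gamma(b')$ followed by the gluing into $\Gamma(b)$. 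That these two components agree after restriction to $a\times_c b$ — so that $G$ really lands in the fibre product $Z$ — is exactly the content of Lemma~\ref{recover}: each gluing, restricted back to its common face $a\times_c b$, returns the shared middle section living in $\Gamma(a\times_c b)$.

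The core of the argument is then to read off both composites of~\eqref{assoc} from the two halves of the $2$-step rule, Lemma~\ref{2step}. The lower half of the hexagon~\eqref{2stepgleaf} asserts that the gluing $\Gamma(a)\times_{\Gamma(a\times_c b')}\Gamma(b')\to\Gamma(c)$ equals the composite which first forms a section over $b$ by gluing the restriction to $a\times_c b$ with the section over $b'$, and then applies $g\colon Z\to\Gamma(c)$; dually, the upper half says the gluing $\Gamma(a')\times_{\Gamma(a'\times_c b)}\Gamma(b)\to\Gamma(c)$ equals the composite which first forms a section over $a$ by gluing the section over $a'$ with the restriction to $a\times_c b$, and then applies $g$. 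Precomposing the lower half with the top edge $g\times_{\id}\id$ of~\eqref{assoc}, and the upper half with the bottom edge $\id\times_{\id}g$, and then using Lemma~\ref{recover} once more to identify each intermediate restriction to $a\times_c b$ with the shared middle section, both composites reduce to $g\circ G$. Hence they coincide and~\eqref{assoc} commutes.

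I expect the main obstacle to be bookkeeping rather than conceptual. One must verify that the iterated fibre products appearing in~\eqref{assoc} and in~\eqref{2stepgleaf} are formed over matching bases, using the pullback identifications $a'\times_a(a\times_c b)=a'\times_c b$ and $b'\times_b(a\times_c b)=a\times_c b'$, and one must track carefully that the intermediate glued sections produced along the top and bottom edges have the correct restrictions for the $2$-step rule to apply. Since Lemma~\ref{2step} already presupposes exactly the three bicoverings we are given, the outer cospans $a'\to c\leftarrow b$ and $a\to c\leftarrow b'$ required for the two gluings into $\Gamma(c)$ are available without any fresh appeal to the bicovering axioms (they arise from stability under composition, axiom~\ref{stabcomp}); the only substantive inputs are the well-definedness of $G$ via Lemma~\ref{recover} and the two factorisations supplied by the $2$-step rule.
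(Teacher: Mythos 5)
Your argument is correct and is essentially the paper's own proof: the map $G$ you construct is exactly the paper's ``diagonal'' $g\times_{\id}g$ (precomposed with duplication of the middle factor), and both composites of~\eqref{assoc} are reduced to $g\circ G$ using the two halves of the $2$-step rule (Lemma~\ref{2step}) together with Lemma~\ref{recover}. The only cosmetic difference is that the paper treats one composite explicitly and obtains the other by the $a\leftrightarrow b$ symmetry, whereas you handle both sides directly.
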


\newcommand{\circnum}[1]{\raisebox{.5pt}{\textcircled{\raisebox{-.9pt}{#1}}}}

\begin{proof}
We prove this by showing commutativity of the diagram
\[\xymatrix{
&\Gamma(a')\times_{\Gamma(a'\times_c b)} \Gamma(a\times_c b)\times_{\Gamma(a\times_c b')} \Gamma(b')\ar[dl]_{\cong}\ar[r]^-{g\times_{\id}\id}\ar@{}[d]|{\circnum{3}}&\Gamma(a)\times_{\Gamma(a\times_c b')}\Gamma(b')\ar[dd]^g\ar[dl]\\
*\txt{$\Gamma(a')\times_{\Gamma(a'\times_c b)} \Gamma(a\times_c b)$\\
$\times_{\Gamma(a\times_c b)}\Gamma(a\times_c b)\times_{\Gamma(a\times_c b')} \Gamma(b')$}\ar[dr]_{g\times_{\id} g}\ar[r]^-{g\times_{\id}\id}&\Gamma(a)\times_{\Gamma(a\times_c b)}\Gamma(a\times_c b)\times_{\Gamma(a\times_c b')}\Gamma(b')\ar[d]^-{\id\times_{\id}g}\ar@<2.5ex>@{}[dl]|(.18){\circnum{1}}\ar@<4.5ex>@{}[dr]_(.7){\circnum{2}}\\
&\Gamma(a)\times_{\Gamma(a\times_c b)}\Gamma(b)\ar[r]_-{g}&\Gamma(c)
}
\]
in which the upper right composition is the one of~\eqref{assoc}, while the lower left one can be thought of as a ``diagonal'' in~\eqref{assoc}: we are about to show that it coincides with this diagram's upper right composition, and thanks to its invariance under swapping $a$ and $b$, it is then also equal to the diagram's lower left composition.

Subdiagram $\circnum{1}$ commutes trivially, while \circnum{2} is an instance of the two-step rule Lemma~\ref{2step}. Since all arrows of subdiagram \circnum{3} act trivially on the last component $\Gamma(b')$, this part reduces to 
\[\xymatrix{
\Gamma(a')\times_{\Gamma(a'\times_c b)} \Gamma(a\times_c b)\ar[r]^-g\ar[d]_{\cong}&\Gamma(a)\ar[d]\\
\Gamma(a')\times_{\Gamma(a'\times_c b)} \Gamma(a\times_c b)\times_{\Gamma(a\times_c b)}\Gamma(a\times_c b)\ar[r]^-{g\times_{\id}\id}&\Gamma(a)\times_{\Gamma(a\times_c b)}\Gamma(a\times_c b)
}
\]
This diagram can be shown to commute by postcomposing with the projections $\pi_1$ and $\pi_2$. This is trivial for $\pi_1$, for which both ways of composing the arrows yield $g$. In the case of $\pi_2$, commutativity holds since according to Lemma~\ref{recover}, saying that restricting the glued section in $\Gamma(a)$ back to $\Gamma(a\times_c b)$ recovers the original section given there.
\end{proof}

This ends our present development of the general theory of gleaves on categories with bicoverings. We now move on to discuss some general classes of examples.

\begin{Example}
Let $L$ be a distributive lattice. We regard $L$ as a category with bicoverings given by those cospans 
\[\xymatrix{
&V\ar[d]^{\rotatebox{-90}{$\scriptstyle{\subseteq}$}}\\
U\ar[r]_{\rotatebox{0}{$\scriptstyle{\subseteq}$}}&W
}\]
which satisfy $U\cup V=W$. It is easy to show that the axioms for a system of bicoverings hold; the stability under pullbacks is exactly distributivity. On this base category, the definition of gleaf~\ref{defbiggleaf} reduces to the one of a gleaf on a distributive lattice~\ref{littlegleaf}.
\end{Example}

\begin{Example}[Base change for gleaves]\label{basegleaves}
Let $\sC$ and $\sC'$ be categories equipped with systems of bicoverings and $\mathsf{D}$ a category with pullbacks.
Given a gleaf $(\Gamma, g)$, where $\Gamma:\sC^{\op}\rightarrow \mathsf{D}$, and a functor $\Xi:\sC'\rightarrow\sC$ which preserves bicoverings and pullbacks, then the composite $\Gamma\,\Xi$ carries an induced structure of gleaf with a gluing operation whose components are components of $g$.
\end{Example}

\begin{Theorem}
Given a site $(\sC, J)$, a $\mathsf{D}$-valued sheaf $\Gamma:\sC^{\op}\rightarrow\mathsf{D}$ on $(\sC, J)$ is a gleaf in a unique way with respect to the bicoverings given by cospans
\[\xymatrix{
&b\ar@{^{(}->}[d]\\
a\ar@{^{(}->}[r]&c
}\]
which generate a covering sieve (Example~\ref{sitebicovering}).
\end{Theorem}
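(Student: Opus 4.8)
The plan is to observe that for a sheaf the descent condition associated to a two-element monomorphic cover collapses exactly onto the fibre product occurring in the definition of a gleaf. Consequently the canonical restriction map is an isomorphism, and the gluing operation is forced to be its inverse; this simultaneously pins down the gluing (giving uniqueness) and makes the three gleaf axioms follow from naturality and functoriality of $\Gamma$.

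First I would record the key consequence of the sheaf axiom. Fix a bicovering $a\to c\leftarrow b$ generating a covering sieve. Since a $\mathbf{D}$-valued sheaf satisfies the equalizer (descent) condition for \emph{every} generating covering family and $\mathbf{D}$ has pullbacks, I would apply this to the two-element family $\{a\to c,\, b\to c\}$. Because both legs are monomorphisms, the ``self-overlap'' terms collapse, $a\times_c a\cong a$ and $b\times_c b\cong b$, so those matching conditions are vacuous; only the cross term $a\times_c b$ contributes, imposing the compatibility that the $a$-section and the $b$-section agree on $a\times_c b$. Hence the canonical map
\[
\rho\;:\;\Gamma(c)\longrightarrow \Gamma(a)\times_{\Gamma(a\times_c b)}\Gamma(b)
\]
induced by the restrictions is an isomorphism in $\mathbf{D}$. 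I would then \emph{define} $g\defin\rho^{-1}$. Uniqueness is immediate from Lemma~\ref{recover}: for any gluing satisfying the axioms, diagram~\eqref{identity} asserts precisely that $\rho\circ g=\id$ on the fibre product, and since $\rho$ is invertible this forces $g=\rho^{-1}$.

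For existence I would verify the three axioms of Definition~\ref{defbiggleaf}, each of which becomes routine once every occurrence of $g$ is read as $\rho^{-1}$ and every unlabelled arrow as a restriction map. The identity axiom holds because when one leg is an identity the generated sieve is maximal and $\rho$ visibly equals $(\id,\mathrm{res})$, whose inverse is the asserted projection. The back-and-forth axiom is almost formal: in the square, the unlabelled bottom arrow is the restriction $\rho_a$ while the opposite arrow is $g_a=\rho_a^{-1}$, so their composite is $\id$ on $\Gamma(c)$ and the square reduces to $g_{a'}=g_{a'}$. For the partial naturality axiom, precomposing with $\rho_c$ and postcomposing with $\rho_{c'}$ converts the claim into $(\text{left vertical})\circ\rho_c=\rho_{c'}\circ\Gamma(q_c)$, i.e.\ commutativity of the square of restriction maps attached to the morphism of bicoverings; this is exactly functoriality of $\Gamma$ applied to the commuting squares $q_a,q_b,q_c$ of~\eqref{naturality}.

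The main obstacle is the first step, namely justifying carefully that the sheaf condition for the generating cospan really does reduce to the stated fibre product. This requires invoking that the equalizer/descent condition holds for arbitrary covering families rather than only for sieves, and then checking that the monomorphism hypothesis on the legs trivialises the self-overlap terms, leaving precisely $\Gamma(a)\times_{\Gamma(a\times_c b)}\Gamma(b)$ as the descent object. Once $\rho$ is known to be an isomorphism, everything else is bookkeeping with natural restriction maps.
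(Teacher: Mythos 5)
Your proposal is correct and follows essentially the same route as the paper: the sheaf condition makes the restriction map $\Gamma(c)\to\Gamma(a)\times_{\Gamma(a\times_c b)}\Gamma(b)$ an isomorphism, the gluing is forced to be its inverse (giving uniqueness via the right-inverse property of Lemma~\ref{recover}), and the axioms reduce to functoriality of $\Gamma$ after composing with the jointly monic restriction maps. Your extra care in reducing the descent condition for the generated sieve to the stated fibre product (using that the legs are monic to kill the self-overlap terms) is a detail the paper leaves implicit, but it is the same argument.
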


\begin{proof}
From the sheaf condition, we know that 
$$
\Gamma(c)\longrightarrow \Gamma(a)\times_{\Gamma(a\times_c b)}\Gamma(b)
$$
is an isomorphism. Since the gluing operation is required to be a right inverse of this map, it is automatically unique and given by the inverse isomorphism.
The axioms of gleaves can be easily seen to hold by postcomposing each required diagram with $\pi_1 g^{-1}$, respectively $\pi_2 g^{-1}$, and using the fact that $\pi_1 g^{-1}$ and $\pi_2 g^{-1}$ are jointly monic. 
\end{proof}

\begin{Theorem}\label{gleavescompo}
Gleaves $\Delta^{\op}\rightarrow \Sets$ on the simplex category $\Delta$ with bicoverings of the form
\be\label{bicomp2}\vxymatrix{
&[n]\ar[d]^{t_{n}}\\
[m]\ar[r]_{s_m}&[j]
}\qquad(n+m\geq j)\ee
are compositories and vice versa.
\end{Theorem}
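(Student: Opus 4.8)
The plan is to exhibit ``gleaf on $\Delta$'' and ``compository'' as literally the same data subject to equivalent axioms, and then to match the two axiom lists item by item. First I would analyse the bicovering \eqref{bicomp2}. Since $s_m,t_n$ are monomorphisms we have $m,n\le j$, and setting $k\defin m+n-j$ the constraint $n+m\ge j$ becomes $k\ge 0$, while $k\le\min(m,n)$ then holds automatically. A direct computation shows that the pullback of $s_m$ and $t_n$ is $[k]$, with the two projections equal to $t_k:[k]\to[m]$ and $s_k:[k]\to[n]$. Consequently the domain $\Gamma([m])\times_{\Gamma([k])}\Gamma([n])$ of the gluing operation $g$ attached to \eqref{bicomp2} consists of exactly those pairs $(A,B)$ with $At_k=Bs_k$, i.e.\ the $k$-composable pairs, and its codomain is $\Gamma([m+n-k])$. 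Thus a gluing operation $g$ for \eqref{bicomp2} is the same datum as a composition $\circ_k$, and a gleaf structure and a compository structure on a fixed simplicial set carry identical data; it remains to check that the axioms correspond.

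Matching the identity and back-and-forth axioms is straightforward. The two degenerate shapes in the gleaf identity axiom are $s_m=\id$ (forcing $m=j$, hence $k=n$) and $t_n=\id$ (forcing $n=j$, hence $k=m$); letting the parameters range, these yield $A\circ_k(At_k)=A$ and $(As_k)\circ_k A=A$ for all admissible $k$, which are exactly the two equations \eqref{comp}. For the back-and-forth axiom I would identify the composite bicovering $a'\to a\to c$ of Definition~\ref{defbiggleaf} with the initial-face refinement $[m]\xrightarrow{s_m}[m+i]\xrightarrow{s_{m+i}}[j]$ (keeping the vertical leg $t_n$ fixed): the relevant pullbacks are $[k]$ and $[k+i]$, and the commuting square of the gleaf back-and-forth axiom reads precisely $(A\circ_k B)s_{m+i}\circ_{k+i}B=A\circ_k B$, which is the first line of \eqref{bf}; the variant with a vertical refinement gives the second line. (The source and target conditions \eqref{comp2} are automatic on the gleaf side by Lemma~\ref{recover}.)

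The heart of the argument is the partial naturality axiom. A morphism of bicoverings over $\Delta$ is determined by its map $q_c=\xi:[j']\to[j]$, and by Definition~\ref{morphbicov} it is one precisely when the induced map of pullbacks $[k']\to[k]$ admits a right inverse; since split epimorphisms in $\Delta$ are exactly the surjections, this says that $\xi$ restricts to a \emph{surjection} on common-face vertices. Specialising $\xi=\partial_i$: if $i$ lies in the $A$-only block $i<m-k$ or the $B$-only block $i>m$, the induced map on pullbacks is $\id_{[k]}$ and the naturality square unwinds to \eqref{faceS}, respectively \eqref{faceT}; if $i$ lies in the common-face range $m-k\le i\le m$, the induced map $[k-1]\to[k]$ is a non-surjective face map with no right inverse, so $\partial_i$ is \emph{not} a morphism of bicoverings --- which is exactly why the putative identity \eqref{facenot} is correctly absent. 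For $\xi=\eta_i$ the induced map on pullbacks is either $\id_{[k]}$ (duplicating a non-common vertex) or a degeneracy $[k+1]\to[k]$ (duplicating a common one); degeneracies are always split epimorphisms, so every $\eta_i$ is a morphism of bicoverings, and the three resulting cases reproduce \eqref{compdeg1}, \eqref{compdeg2} and \eqref{compdeg3}. This settles the gleaf-to-compository direction.

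The converse direction requires promoting the face and degeneracy compatibilities to partial naturality for an \emph{arbitrary} morphism of bicoverings, and this is the step I expect to be the main obstacle. The strategy is to factor a general $\xi:[j']\to[j]$ into elementary face and degeneracy maps and to paste the corresponding naturality squares. The delicate point is that this factorisation must be carried out \emph{through standard-form bicoverings}, with every elementary factor itself a morphism of bicoverings: the right-inverse condition on $\xi$ forces its induced map on common faces to be surjective, which should guarantee that no face factor deletes a common-face vertex (so that only the ``allowed'' $\partial_i$ of \eqref{faceS} and \eqref{faceT} occur), while degeneracies may act freely. Verifying that the intermediate bicoverings are again of the form \eqref{bicomp2} and that the pasted squares assemble to the required one is the bookkeeping that completes the proof.
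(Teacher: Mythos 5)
Your proposal follows essentially the same route as the paper: it too identifies the gluing data with $\circ_k$ via the pullback $[k]=[m]\times_{[j]}[n]$ (with projections $t_k$ and $s_k$), matches the identity and back-and-forth axioms directly, characterises morphisms of bicoverings in $\Delta$ by surjectivity of $q$ on the common face (this is Lemma~\ref{complab}\ref{compla}), and reduces partial naturality to the elementary face and degeneracy cases. The one step you defer as ``bookkeeping''---decomposing an arbitrary morphism of bicoverings into elementary ones passing through intermediate standard-form bicoverings---is precisely the paper's Lemma~\ref{complab}\ref{complb}, proved by image-factorising $q$ into a surjection followed by an injection and inducting on each part separately; your key observation that surjectivity on the common face forbids any face factor from deleting a common-face vertex is exactly the argument used there.
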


The proof of this theorem requires the following lemma.

\begin{Lemma}\label{complab}
\begin{enumerate}
\item\label{compla} For $n+m\geq j$ and $n'+m'\geq j'$, the diagram 
\be\label{surjective}\vxymatrix{
&&[n']\ar[dr]^{q|_{[n']}}\ar[dd]_{t_{n'}}&\\
&&&[n]\ar[dd]^{t_n}\\
[m']\ar[rr]^{s_{m'}}\ar[dr]_{q|_{[m']}}&&[j']\ar[dr]|*+<4pt>{\scriptstyle{{q}}}&\\
&[m]\ar[rr]_{s_m}&&[j]
}\ee
is a morphism of bicoverings iff $q|_{[m']\cap[n']}:\mathrm{im}(s_{m'})\cap\mathrm{im}(t_{n'})\longrightarrow \mathrm{im}(s_m)\cap\mathrm{im}(t_n)$ is surjective.
\item\label{complb} Any morphism of bicoverings is a composition of those of the form 
\be\label{bidegeneracy}\vxymatrix{
&[n]\ar@{=}[dr]\ar[d]_{t_{n}}&\\
[m]\ar[r]^{s_{m}}\ar[dr]_{\eta_i}&[j]\ar[dr]|*+<4pt>{\scriptstyle{{\eta_i}}}&[n]\ar[d]^{t_n}\\
&[m-1]\ar[r]_{s_{m-1}}&[j-1]
}\quad\vxymatrix{
&[n]\ar[dr]^{\eta_{i+n-j}}\ar[d]_{t_{n}}&\\
[m]\ar[r]^{s_{m}}\ar@{=}[dr]&[j]\ar[dr]|*+<4pt>{\scriptstyle{{\eta_i}}}&[n-1]\ar[d]^{t_{n-1}}\\
&[m]\ar[r]_{s_m}&[j-1]
}\quad\vxymatrix{
&[n]\ar[dr]^{\eta_{i+n-j}}\ar[d]_{t_{n}}&\\
[m]\ar[r]^{s_{m}}\ar[dr]_{\eta_i}&[j]\ar[dr]|*+<4pt>{\scriptstyle{{\eta_i}}}&[n-1]\ar[d]^{t_{n-1}}\\
&[m-1]\ar[r]_{s_{m-1}}&[j-1]
}
\ee
which correspond to the degeneracy conditions~\eqref{compdeg1}--\eqref{compdeg3}, and those of the form 
\be\label{biface}\vxymatrix{
&[n]\ar@{=}[dr]\ar[d]_{t_{n}}&\\
[m-1]\ar[r]^{s_{m-1}}\ar[dr]_{\partial_i}&[j-1]\ar[dr]|*+<4pt>{\scriptstyle{{\partial_i}}}&[n]\ar[d]^{t_n}\\
&[m]\ar[r]_{s_m}&[j]
}\qquad\vxymatrix{
&[n-1]\ar[dr]^{\partial_{i+n-j}}\ar[d]_{t_{n-1}}&\\
[m]\ar[r]^-{s_{m}}\ar@{=}[dr]&[j-1]\ar[dr]|*+<4pt>{\scriptstyle{{\partial_i}}}&[n]\ar[d]^{t_n}\\
&[m]\ar[r]_{s_m}&[j]
}\ee
\end{enumerate}
which correspond to the face map conditions~\eqref{faceS} and~\eqref{faceT}.
\end{Lemma}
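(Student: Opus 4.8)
The plan is to derive part~\ref{compla} directly from the right-inverse characterisation noted after Definition~\ref{morphbicov}, and to prove part~\ref{complb} by factoring the map $q$ into elementary generators of $\Delta$ and interpolating a chain of bicoverings.

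For part~\ref{compla}, I would first identify the relevant pullbacks. Since $s_m$ and $t_n$ are the inclusions of $\mathrm{im}(s_m)=\{0,\ldots,m\}$ and $\mathrm{im}(t_n)=\{j-n,\ldots,j\}$ into $[j]$, their pullback is the intersection $\mathrm{im}(s_m)\cap\mathrm{im}(t_n)=\{j-n,\ldots,m\}$, a copy of $[k]$ with $k=m+n-j\geq 0$; likewise for the primed data. Commutativity of~\eqref{surjective} forces $q$ to carry the primed overlap into the unprimed one, so the induced map between these pullbacks is precisely the restriction $q_{|[m']\cap[n']}$ of the statement. By the remark following Definition~\ref{morphbicov}, the diagram~\eqref{surjective} is a morphism of bicoverings if and only if this induced map admits a right inverse. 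A morphism of finite ordinals admits a right inverse exactly when it is surjective---the section $y\mapsto\min q^{-1}(y)$ is then order-preserving---so this is equivalent to surjectivity of $q_{|[m']\cap[n']}$, as claimed.

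For part~\ref{complb}, I would use that every map $q:[j']\to[j]$ in $\Delta$ is a composite of elementary degeneracies and face maps, and argue by induction, peeling off one generator at a time while interpolating a bicovering between the source $([m'],[n'],[j'])$ and the target $([m],[n],[j])$. The organising device is the trichotomy of $[j]$ into the \emph{source-only} part $\{0,\ldots,j-n-1\}$, the \emph{overlap} $\{j-n,\ldots,m\}$, and the \emph{target-only} part $\{m+1,\ldots,j\}$ (and similarly for $[j']$). If $q$ is not injective, I choose $i$ with $q(i)=q(i+1)$ and split off the degeneracy $\eta_i$; according to whether the collapsed pair $\{i,i+1\}$ lies in the source part, the target part, or the overlap, this elementary step is one of the three diagrams in~\eqref{bidegeneracy}, matching~\eqref{compdeg1}--\eqref{compdeg3}. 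If $q$ is not surjective, I choose a value $i\notin\mathrm{im}(q)$ and split off the face map $\partial_i$, which---according to the region of the inserted vertex---is one of the two diagrams in~\eqref{biface}, matching~\eqref{faceS}--\eqref{faceT}. Part~\ref{compla} then certifies that both the peeled-off elementary diagram and the remaining morphism are genuine morphisms of bicoverings, since a degeneracy and a face each restrict to a surjection on the relevant overlap.

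The main obstacle is the combinatorial bookkeeping in part~\ref{complb}: one must specify the intermediate dimensions $m'',n''$ at each step, verify the defining inequality $m''+n''\geq j''$ so that the interpolating cospan is genuinely a bicovering, and check that the overlap of each intermediate bicovering is the image of the previous overlap, so that surjectivity on overlaps is preserved along the whole chain. Particular care is needed at the boundary indices, where the collapsed pair or the inserted vertex straddles two regions; these are exactly the cases where two of the five forms coincide, as already noted in the Remark following axiom~\ref{compdeg}, and one must check that either assignment yields the same elementary morphism of bicoverings.
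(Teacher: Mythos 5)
Your part~\ref{compla} coincides with the paper's argument: both reduce the morphism-of-bicoverings condition to the existence of a right inverse for the induced map between the pullbacks (the remark after Definition~\ref{morphbicov}), identify that induced map with $q_{|[m']\cap[n']}$, and use that a map in $\Delta$ has a right inverse iff it is surjective. Your part~\ref{complb} follows the same overall strategy as the paper --- decompose $q$ into elementary generators and classify each elementary step by which of the three regions (initial-only, overlap, terminal-only) of $[j]$ it touches --- but with one organizational difference worth noting. The paper first performs the image factorisation $q=q_2q_1$ in a single stroke, inserting one intermediate bicovering over $\mathrm{im}(q)$, and must then prove a non-obvious surjectivity statement for the induced maps on the $[m]$- and $[n]$-components of that intermediate stage (a case analysis invoking the overlap-surjectivity hypothesis); only afterwards does it peel off the individual $\eta_i$'s and $\partial_i$'s by induction. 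Your one-generator-at-a-time peeling sidesteps that auxiliary lemma: when a single $\eta_i$ is split off the domain side, the new overlap is exactly the $\eta_i$-image of the old one, so surjectivity of the residual map on overlaps follows at once from $q=q'\eta_i$, and dually when a single $\partial_i$ is split off the codomain side. Hence the ``bookkeeping'' you defer is genuinely routine in your setup, though you should write out at least one degeneracy case and one face case to confirm the intermediate dimensions and the inequality $m''+n''\geq j''$. The one substantive point you must make explicit is why only two face-map diagrams appear in~\eqref{biface} while three degeneracy diagrams appear in~\eqref{bidegeneracy}: a vertex $i\notin\mathrm{im}(q)$ can never lie in the overlap $\{j-n,\ldots,m\}$, because the hypothesis of part~\ref{compla} forces $\mathrm{im}(s_m)\cap\mathrm{im}(t_n)\subseteq\mathrm{im}(q)$, so $i<j-n$ or $i>m$; without this observation the case analysis for the face maps is incomplete.
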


\begin{proof}
We start with~\ref{compla}. The two additional arrows $[m']\leftarrow[m+n-j]\rightarrow[n']$ required for being a morphism of bicoverings exist iff $q|_{[m']\cap[n']}$ has a right inverse. In $\Delta$, this is equivalent to surjectivity. 

For~\ref{complb}, we first claim that any morphism of bicoverings can be split into a composition of a morphism with surjective components followed by a morphism with injective components as in the diagram 
\be\label{surinj}
\vcenter{\vbox{\xymatrix@!{
&&[n']\ar@{->>}[dr]\ar[dd]_{t_{n'}}\\
&&&[\widehat{n}]\ar[dd]_{t_{\widehat{n}}}\ar@{^{(}->}[dr]\\
[m']\ar[rr]^{s_{m'}}\ar@{->>}[dr]&&[j']\ar@{->>}[dr]|*+<4pt>{\scriptstyle{{q_1}}}&&[n]\ar[dd]_{t_n}\\
&[\widehat{m}]\ar[rr]^{s_{\widehat{m}}}\ar@{^{(}->}[dr]&&[\widehat{j}]\ar@{^{(}->}[rd]|*+<4pt>{\scriptstyle{{q_2}}}\\
&&[m]\ar[rr]^{s_m}&&[j]
}}}\ee
To see this, we decompose $q$ into two parts using its image factorisation,
\[
\xymatrix{[j']=\mathrm{dom}(q)\ar@{->>}[r]^-{q_1}&\mathrm{im}(q)\ar@{^{(}->}[r]^-{q_2}&\mathrm{cod}(q)=[j]}
\]
We then define $[\widehat{m}]$ to be the pullback $q_2^{-1}([m])$, and similarly $[\widehat{n}]\defin q_2^{-1}([n])$. In particular, the pair $(s_{\widehat{m}}, t_{\widehat{n}})$ bicovers $[\widehat{j}]$. 
The maps $[m']\twoheadrightarrow [\widehat{m}]$ and $[n']\twoheadrightarrow[\widehat{n}]$ are then defined by the universal property of pullbacks. 
We show surjectivity of $[m']\twoheadrightarrow [\widehat{m}]$; a similar proof applies to $[n']\twoheadrightarrow[\widehat{n}]$. For any element $v\in [\widehat{m}]$, there exists a $v'\in [j']$ such that $q_1(v')=s_{\widehat{m}}(v)$. Then either $v'\in \mathrm{im}(s_{m'})$, in which case we are done, or $v'\in\mathrm{im}(t_{n'})$. In the latter case we obtain that $q_1(v')=s_{\widehat{m}}(v)\in \mathrm{im}(t_{\widehat{n}})$, and  thus $s_{\widehat{m}}(v)\in\mathrm{im}(s_{\widehat{m}})\cap \mathrm{im}(t_{\widehat{n}})$. By the assumed surjectivity of $q|_{[n']\cap[m']}$, there exists $v''\in\mathrm{im}(s_{m'})\cap\mathrm{im}(t_{n'})$ such that $q(v'')=q_2(s_{\widehat{m}}(v))$. Since $q_2$ is injective, $q_1(v'')=s_{\widehat{m}}(v)$. Because $v''\in\mathrm{im}(s_{m'})$, this gives a preimage in $[m']$ of the original $v\in[\widehat{m}]$, as desired.

We have shown in passing that $q_1|_{[m']\cap[n']}: \mathrm{im}(s_{m'})\cap \mathrm{im}(t_{n'})\longrightarrow  \mathrm{im}(s_{\widehat{m}})\cap \mathrm{im}(t_{\widehat{n}})$ is surjective. Moreover, $q_2|_{[\widehat{m}]\cap[\widehat{n}]}: \mathrm{im}(s_{\widehat{m}})\cap \mathrm{im}(t_{\widehat{n}})\longrightarrow  \mathrm{im}(s_{m})\cap \mathrm{im}(t_n)$ is bijective thanks to the given assumption on $q$. 

We now decompose the lower right part of~\eqref{surinj} into morphisms of the form~\eqref{biface}. This we do by induction on $j-\widehat{j}$. In the base case $j=\widehat{j}$, we necessarily have $q_2=\id_{[j]}$, so that there is nothing to be done. For the induction step, we pick any $i\in[j]\setminus \mathrm{im}(q_2)$, so that $q_2$ can be factored as $\partial_i\widehat{q}_2 $. Since $  \mathrm{im}(s_m)\cap  \mathrm{im}(t_n)\subseteq\mathrm{im}(q_2)$, we either have $i\not\in \mathrm{im}(t_{n})$, which means that $i<j-n$, or $i\not\in \mathrm{im}(s_{m})$, which means that $i>m$. In the second case we obtain 
\[\xymatrix{
&&[\widehat{n}]\ar@{^{(}->}[dr]\ar[dd]_{t_{\widehat{n}}}\\
&&&[n-1]\ar[dd]_{t_{n-1}}\ar[dr]^{\partial_{i+n-j}}\\
[\widehat{m}]\ar[rr]^{s_{\widehat{m}}}\ar@{^{(}->}[dr]&&[\widehat{j}]\ar@{^{(}->}[dr]|*+<4pt>{\scriptstyle{\widehat{q}_2}}&&[n]\ar[dd]_{t_n}\\
&[m]\ar[rr]^{s_{m}}\ar@{=}[dr]&&[j-1]\ar[rd]|*+<4pt>{\scriptstyle{{\partial_i}}}\\
&&[m]\ar[rr]^{s_m}&&[j]
}\]
so that the claim follows from the induction assumption applied to $\widehat{q}_2$. The first case is analogous. 

We now decompose the first part of~\eqref{surinj} into morphisms of the form~\eqref{bidegeneracy} by induction on $j'-\widehat{j}$. The base case $j'=\widehat{j}$ is trivial. For the induction step, we pick an $i\in [j']$ such that $q_1(i)=q_1(i+1)$. We then have three different cases: 
\begin{enumerate}
\item $i<j'-n'$.
\item $j'-n'\leq i<m' $.
\item $m'\leq i$.
\end{enumerate}
We illustrate the proof for the first case, obtaining the diagram
\[\xymatrix{
&&[n']\ar@{=}[dr]\ar[dd]_{t_{n'}}\\
&&&[n']\ar[dd]_{t_{n'}}\ar@{->>}[dr]\\
[m']\ar[rr]^{s_{m'}}\ar[dr]_{\eta_i}&&[j']\ar[dr]|*+<4pt>{\scriptstyle{{\eta_i}}}&&[\widehat{n}]\ar[dd]_{t_{\widehat{n}}}\\
&[m'-1]\ar[rr]^{s_{m'-1}}\ar@{->>}[dr]&&[j'-1]\ar@{->>}[rd]|*+<4pt>{\scriptstyle{{\widehat{q}_1}}}\\
&&[\widehat{m}]\ar[rr]^{s_{\widehat{m}}}&&[\widehat{j}]
}\]
The claim follows from the induction assumption applied to $\widehat{q}_1$. The two other cases can be treated in an analogous way.\qedhere
\end{proof}

\begin{proof}[Proof of Theorem~\ref{gleavescompo}]
We need to show that the compository axioms are equivalent to the gleaf axioms with respect to bicoverings of the form~\eqref{bicomp2}. The identity axiom for compositories corresponds exactly to the identity axiom for gleaves and similarly for the back-and-forth axiom.
Compatibility of composition with degeneracy maps is equivalent to naturality for morphisms of bicoverings of the form~\eqref{bidegeneracy}, while compatibility with face maps corresponds to~\eqref{biface}. 

On the other hand, Lemma \ref{complab} implies that these compatibility conditions are sufficient to guarantee naturality with respect to all morphisms of bicoverings.
\end{proof}

\begin{Remark}
One can also try to consider gleaves not just on $\Delta$, but also on other categories of geometric shapes, such as on the globe category or the cube category, and see whether they might have a meaningful interpretation as higher categories. This does not work for the globe category, which does not have any non-trivial pullbacks. So far, we have not investigated the case of the cube category any further.
\end{Remark}

\subsection{Categories of gleaves}
\label{categorygleaves}

We would like to turn the collection of $\mathsf{D}$-valued gleaves on a category $\sC$ with bicoverings into a category itself. 

\begin{Definition}
A morphism of gleaves $(\Gamma, g)\rightarrow(\Gamma', g')$ is a natural transformation $f:\Gamma\rightarrow \Gamma'$ such that 
\[\xymatrix@R=1.5cm@C=1.5cm{
\Gamma(a)\times_{\Gamma(a\times_c b)}\Gamma(b)\ar[r]^(.65)g\ar[d]_{f_a\times_{f_{a\times_cb}}f_b}&\Gamma(c)\ar[d]^{f_c}\\
\Gamma'(a)\times_{\Gamma'(a\times_c b)}\Gamma'(b)\ar[r]^(.7){g'}&\Gamma'(c)
}\]
commutes.
\end{Definition}
\noindent With this definition, the collection of gleaves $(\Gamma:\sC^{\op}\rightarrow\mathsf{D}, g)$ forms a category $\mathsf{Gl}(\sC,\mathsf{D})$. 

We now reproduce an argument communicated to us by Peter Johnstone which shows that a category of gleaves $\mathsf{Gl}(\sC,\Sets)$ is not necessarily a topos.

\begin{Theorem}[Johnstone]\label{johnstonetheorem}
There exists $\sC$ for which $\mathsf{Gl}(\sC,\Sets)$ is not cartesian closed, but does have a subobject classifier.
\end{Theorem}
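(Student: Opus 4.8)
The plan is to build the counterexample by hand: choose the smallest base category $\mc$ carrying a single nontrivial bicovering and analyse $\mathbf{G}(\mc,\Sets)$ explicitly. Concretely, I would take $\mc$ to be a pullback square $p\to a$, $p\to b$, $a\to c$, $b\to c$ (all legs monic), together with the identity/maximal bicoverings and the one genuine bicovering $a\to c\leftarrow b$ with $p=a\times_c b$. The first thing to record is that the forgetful functor $U:\mathbf{G}(\mc,\Sets)\to[\mc^\op,\Sets]$ is faithful and creates finite limits: since the gluing is a total function on a fibre product built functorially from $\Gamma$, products and equalizers of gleaves are computed pointwise with the induced gluing (the product of two sections is again a section). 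Consequently a subobject of a gleaf $\Gamma$ is exactly a sub-presheaf $A\subseteq\Gamma$ closed under gluing, i.e.\ $g$ carries $A(a)\times_{A(p)}A(b)$ into $A(c)$; closure automatically makes $A$ a gleaf, the restriction of $g$ supplying the required section.

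For the subobject classifier I would begin from the classifier $\Omega_0$ of the presheaf topos $[\mc^\op,\Sets]$, whose sections are sieves, and equip it with a gluing operation tailored so that the presheaf characteristic map $\chi_A:\Gamma\to\Omega_0$ of a sub-presheaf $A$ is a \emph{morphism of gleaves} precisely when $A$ is closed under gluing. The maps $\chi_A$ already biject with sub-presheaves of $\Gamma$; restricting to those that preserve gluing then yields a natural bijection between closed subobjects of $\Gamma$ and gleaf morphisms $\Gamma\to\Omega$, which is the defining property of a subobject classifier in $\mathbf{G}(\mc,\Sets)$. The delicate point is to pin down the value of the gluing on $\Omega$ at the nontrivial bicovering and to check that, with this choice, $\Omega$ itself satisfies the identity, back-and-forth and partial naturality axioms, and that naturality of $\chi_A$ at the bicovering is equivalent to the closure condition on $A$. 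Both reduce to a finite verification because $\mc$ is small.

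To see that $\mathbf{G}(\mc,\Sets)$ is not cartesian closed I would exhibit a gleaf $X$ for which $X\times(-)$ fails to preserve some colimit that exists in $\mathbf{G}(\mc,\Sets)$, so that $X\times(-)$ cannot be a left adjoint and the exponential $(-)^X$ cannot exist. The mechanism is that, although $U$ creates limits, it does \emph{not} create colimits: the gluing is only a \emph{section} of the canonical map $\Gamma(c)\to\Gamma(a)\times_{\Gamma(p)}\Gamma(b)$, a map that need not be surjective, so when a colimit (a pushout or coequalizer) forces identifications it can create newly compatible pairs in $X\times(-)$ that must be supplied with glued values, and this gluing-closure is not preserved by first forming the product with $X$. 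I would make this precise by computing one explicit pushout of gleaves, forming its product with a suitable $X$, and comparing with the pushout of the products; the two differ exactly by the extra glued elements that $X$ forces. Equivalently, one can argue directly that $Y^X$ cannot exist: its underlying presheaf is forced by $\mathbf{G}(-,Y^X)\cong\mathbf{G}((-)\times X,Y)$, and one checks that the restriction map this candidate would have to split is not surjective, so no gluing section exists and the candidate fails to be a gleaf.

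The main obstacle is exactly this interaction between the conditional gluing operation and colimits. The subobject-classifier half is essentially bookkeeping once the gluing on $\Omega$ is guessed correctly, but the failure of cartesian closedness must be tied to the defining feature of gleaves—that gluing is merely a section of a possibly non-surjective restriction map—and producing the explicit colimit (or the explicit non-gleaf exponential candidate) that witnesses the discrepancy is where the real work lies.
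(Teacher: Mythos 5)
Your strategy coincides with the paper's at every structural point: the base category you propose (a single pullback square $p\to a,b\to c$ with the one genuine bicovering $a\to c\leftarrow b$) is exactly the paper's poset $0<u,v<1$, the classifier is built from sieves, and non-cartesian-closedness is to be witnessed by a product functor failing to preserve a colimit. But the decisive content of the second half is missing, and you say so yourself (``producing the explicit colimit \ldots is where the real work lies''). The paper's key device, absent from your outline, is to adjoin to $\mc$ a retract $w$ of $1$ (maps $1\xrightarrow{r}w\xrightarrow{i}1$ with $ir=\id_w$), so that a gleaf on $\mc$ becomes a presheaf on the enlarged category $\mc'$ subject to the single ``sheaf'' condition that $\Gamma(w)\to\Gamma(u)\times_{\Gamma(0)}\Gamma(v)$ be an isomorphism, with $g=\Gamma(r)$. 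This does three things at once: the representables $h_0,h_u,h_v,h_w,h_1$ all become objects of $\mathbf{G}(\mc,\Sets)$; $h_w$ is terminal; and, by Yoneda, the sheaf condition says \emph{verbatim} that $h_w$ is the pushout $h_u\amalg_{h_0}h_v$ in the category of gleaves---so the needed colimit is obtained for free rather than computed. The witness is then $X=h_1$: objectwise one checks $h_x\times h_1\cong h_x$ for $x\in\{0,u,v\}$, while $h_w\times h_1\cong h_1\not\cong h_w$ since $h_w$ is terminal, so $-\times h_1$ destroys the pushout and $h_1$ is not exponentiable. Without this (or an equivalent) device, your plan to ``compute one explicit pushout of gleaves'' is itself problematic for precisely the reason you identify: the forgetful functor does not create colimits, so you would first have to prove the pushout you want exists in $\mathbf{G}(\mc,\Sets)$ and identify it, which is where the retract trick earns its keep.

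Two smaller points. First, your system of bicoverings is not closed under the stability-under-pullbacks axiom as stated: pulling $a\to c\leftarrow b$ back along $a\to c$ forces the degenerate bicovering with legs $a=a$ and $p\to a$ (and symmetrically for $b$), which the paper duly includes; this costs nothing, since the identity axiom forces the gluing there to be a projection, but it must be added. Second, for the classifier the paper takes the presheaf of \emph{closed} sieves on $\mc'$ and invokes the argument of Mac Lane--Moerdijk verbatim, whereas you propose to keep the full presheaf classifier $\Omega_0$ and choose a gluing on it. On this particular base the only freedom is the value of $g_\Omega$ on the single compatible pair of maximal sieves, and sending it to the maximal sieve on $c$ does make $\chi_A$ gluing-compatible exactly when $A$ is gluing-closed, with the remaining gleaf axioms reducing to the right-inverse property; so your variant can be pushed through here, but it is a finite verification you have deferred rather than carried out, and it should be recorded that uniqueness of $\chi_A$ is inherited from the presheaf topos since $\mathbf{G}(\Gamma,\Omega_0)$ is a subset of the presheaf hom-set.
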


\begin{proof}
Consider $\sC$ to be the category formed by the poset with Hasse diagram
\[\xymatrix{
&1&\\
u\ar[ru]&&v\ar[lu]\\
&0\ar[ul]\ar[ur]&
}\]
which is the four-element Boolean algebra. As for any distributive lattice, we take as non-maximal bicoverings the three cospans 
\[\xymatrix{&v\ar[d]\\
u\ar[r]&1}\qquad
\xymatrix{
&0\ar[d]\\
u\ar@{=}[r]&u
}\qquad\xymatrix{
&v\ar@{=}[d]\\
0\ar[r]&v}\]
For a gleaf $(\Gamma, g)$ on $\sC$, the only non-trivial component of the gluing operation is $g:\Gamma(u)\times_{\Gamma(0)}\Gamma(v)\longrightarrow\Gamma(1)$. Hence such a gleaf is the same as a presheaf on $\sC$ together with a function $g:\Gamma(u)\times_{\Gamma(0)}\Gamma(v)\longrightarrow\Gamma(1)$ which is a right inverse of $\Gamma(1)\longrightarrow  \Gamma(u)\times_{\Gamma(0)}\Gamma(v)$. This function can be incorporated into the data of a presheaf upon adjoining an additional object $w$ to $\sC$ obtaining a new category $\sC'$  presented by
\be\label{sketch}\vxymatrix{
&1\ar@/^.8pc/[d]^r&\\
&w\ar@/^.8pc/[u]^i&&& ri=\id_w.\\
u\ar[ru]&&v\ar[lu]\\
&0\ar[ul]\ar[ur]\ar@{}[uu]|*{\circlearrowleft}&
}\ee
Then a gleaf on $\sC$ is the same as a presheaf on
$\sC'$ which satisfies the sheaf-like condition that the map $\Gamma(w)\longrightarrow  \Gamma(u)\times_{\Gamma(0)}\Gamma(v)$ is an isomorphism. The gluing operation is given by $g=\Gamma(r)$.
Note that the terminal object of $\sC'$ is $w$.

In conclusion, the category of gleaves $\mathsf{Gl}(\sC,\Sets)$ is equivalent to the category $\mathsf{Gl}$ of sheaves on $\sC'$, where $\{u\rightarrow w,v\rightarrow w\}$ is the unique non-trivial covering family. Since the pullback of this family along $r$ is not a covering family, $\sC'$ is not a site. 

A direct verification shows that all contravariant hom-functors are sheaves on $\sC'$. Our goal is to show that the hom-functor $h_1=\mathsf{Gl}(-,1)$ is not an exponentiable object in $\mathsf{Gl}$. Since the contravariant Yoneda embedding is cocontinuous, the diagram of hom-functors
\[\xymatrix{
h_0\ar[r]\ar[d] & h_v\ar[d]\\
h_u\ar[r] & h_w
}\]
is a pushout. Now if the hom-functor $h_1$ was an exponentiable object, then the functor $-\times h_1$ would be a left adjoint and hence preserve colimits. In particular, this would imply that also
\be\label{prodsquare}\vxymatrix{
h_0\times h_1\ar[r]\ar[d] & h_v\times h_1\ar[d]\\
h_u\times h_1\ar[r] & h_w\times h_1}
\ee
would have to be a pushout; however, a direct objectwise consideration shows that $h_0\times h_1\cong h_0$, $h_u\times h_1\cong h_u$ and $h_v\times h_1\cong h_v$, where each of these isomorphism is given by the corresponding product projection. On the other hand, $h_w\times h_1\cong h_1$ also by the product projection, because $h_w$ is terminal in $\mathsf{Gl}$. Since $h_1\not\cong h_w$, uniquness of pushouts up to isomorphism shows that~\eqref{prodsquare} is not a pushout. In conclusion, $\mathsf{Gl}$ does not have exponentials.

We now proceed to showing that $\mathsf{Gl}$ does have a subobject classifier which can be constructed just as in a Grothendieck topos, namely the presheaf $\Omega(x)\defin\{S\;|\;S\text{ is a closed sieve on } x\}$ for all $x\in \sC'$. A direct check shows that the sheaf condition $\Omega(w)\stackrel{\cong}{\longrightarrow} \Omega(u)\times_{\Omega(0)}\Omega(v)$ holds. The proof of \cite[Prop.~III.7.3]{MM} applies verbatim and shows that $\Omega$ is indeed a subobject classifier.
\end{proof}

Alternatively, one can prove that $\Omega$ is a subobject classifier using the reasoning of \cite[p.~551]{elephant}.

Some examples of non-cartesian-closed categories with a subobject classifier have been known previously~\cite{Engenes}. Johnstone's earlier example~\cite[Remark 8.3]{Johnstone1} is very similar to the example used in the above proof: his binary operation $b$ has the flavour of a gluing operation from which its arguments can be recovered by application of the unary operations $l$ and $r$, which are similar to restriction maps.

\section{Examples of gleaves}

We would now like to describe various concrete examples of gleaves, including a gleaf of metric spaces, similar in flavour to the compository of metrics (Section~\ref{metspacesI}), and a gleaf of joint probability distributions, corresponding to the compository of joint distributions (Section~\ref{secjpd}). We expect that many other geometrical structures of a ``global'' nature can be reformulated in this way.

\subsection{Metric spaces, take II}\label{takeII}
Since a metric lives on a set, we now take the base category to be $\sC\defin\Sets$. For any cospan with injective legs
\be\label{bicoverII}\vxymatrix{
&B\ar@{^{(}->}[d]\\
A\ar@{^{(}->}[r]&C
}\ee
we consider $A$ and $B$ to be subsets of $C$, thus omitting explicit mention of the inclusion maps. We take it to be a bicovering if the two legs are jointly surjective, i.e.~$A\cup B=C$, or equivalently use the definition from Example~\ref{exadh}.

The functor 
\begin{align*}
\begin{split}
\mathcal{M}:\Sets^{\op}&\rightarrow \Sets\\
A&\mapsto\{d:A\times A\rightarrow \Rl_{\geq 0}\cup\{\infty\}\:|\: d\text{ is a metric }\}\\
\mleft(\xymatrix{A\ar[r]^f & B}\mright)&\mapsto\mleft(\vcenter{\vbox{\xymatrix@R=.2cm@C=-1.7cm{ \mathcal{M}(B)\ar[rr] && \mathcal{M}(A) \\ d_B\ar@{|->}[rr] && d_A \\ & d_A(x,y)\defin d_B(f(x), f(y))}}}\mright)
\end{split}
\end{align*}
assigns to every set the collection of all ways of turning that set into a ``metric space''. Just as in Section~\ref{metspacesI}, what we mean by this is a set equipped with a distance function taking values in $\Rl_{\geq 0}\cup\{\infty\}$, satisfying the triangle inequality, assigning zero distance from any point to itself, and, depending on the reader's preference, the optional symmetry axiom.

The component of the gluing operation $g$ on a bicovering~\eqref{bicoverII} is defined as follows:
\[
\vcenter{\vbox{\xymatrix@R=.2cm{
g:\mathcal{M}(A)\times_{\mathcal{M}(A\times_C B)}\mathcal{M}(B) \ar[r] & \mathcal{M}(C) \\
(d_A, d_B) \ar@{|->}[r] & g(d_A,d_B)
}}}
\]
where $g(d_A,d_B)$ is the metric on $C$ given by
\be
\label{metrics}
g(d_A,d_B)(x,z) = \begin{cases}
d_A(x,z) & \text{if }x\in A,\:z\in A,\\
\inf_{y\in A\cap B}\mleft[d_A(x, y)+d_B(y,z)\mright] & \text{if }x\in A,\:z\in B,\\
\inf_{y\in A\cap B}\mleft[d_B(x, y)+d_A(y,z)\mright] & \text{if }x\in B,\:z\in A,\\
d_B(x,z) & \text{if }x\in B,\:z\in B.\\
\end{cases}
\ee
The infima are understood to be $\infty$ in case that $A\cap B=\emptyset$. Since these four cases overlap, it needs to be checked that the compatibility assumption
\[
d_A(w,w') = d_B(w,w') \qquad\forall w,w'\in A\cap B
\]
guarantees that this is well-defined. For example, if we take $x\in A$ and $z\in A\cap B$, then the result of applying the first case should coincide with the application of the second:
$$\inf_{y\in A\cap B}\mleft[d_A(x, y)+d_B(y,z)\mright]=\inf_{y\in A\cap B}\mleft[d_A(x, y)+d_A(y,z)\mright]=d_A(x,z).$$
Similar reasoning applies to all other overlap cases. It is clear that $g(d_A,d_B)|_A = d_A$ and $g(d_A,d_B)|_B = d_B$.

We now verify the triangle inequalities
\[
g(d_A,d_B)(x,z) \leq g(d_A,d_B)(x,y)+g(d_A,d_B)(y,z)
\]
in the case in which $x, z\in A$ and $y\in B$. All other cases are similar or simpler than this. We know that 
\begin{align*}
d_A(x,z)&\leq \inf_{w,w'\in A\cap B}\mleft[d_A(x, w)+d_A(w,w')+d_A(w',z)\mright]\\
&= \inf_{w,w'\in A\cap B}\mleft[d_A(x, w)+d_B(w,w')+d_A(w',z)\mright]\\
&\leq \inf_{w,w'\in A\cap B}\mleft[d_A(x, w)+d_B(w,y)+d_B(y,w')+d_A(w',z)\mright]\\
&=\inf_{w\in A\cap B} \mleft[d_A(x,w)+d_B(w,y)\mright] + \inf_{w'\in A\cap B} \mleft[d_B(y,w')+d_A(w',z)\mright],
\end{align*}
from which the assertion follows by the definition~\eqref{metrics}.

\begin{Proposition}
With these definitions, $(\mathcal{M},g)$ becomes a gleaf.
\end{Proposition}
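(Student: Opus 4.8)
The plan is to verify the three axioms of Definition~\ref{defbiggleaf} in turn---the identity axiom, the back-and-forth axiom, and the partial naturality axiom---having already checked above that $g(d_A,d_B)$ is well-defined (the overlapping cases of~\eqref{metrics} agree by compatibility) and satisfies the triangle inequality, so that $g$ genuinely lands in $\mathcal{M}(C)$. The identity axiom is immediate: for a bicovering with $A=C$, every pair of points lies in $A$, so only the first case of~\eqref{metrics} ever applies and $g(d_A,d_B)=d_A$, i.e.\ $g=\pi_1$; the case $B=C$ is symmetric and gives $g=\pi_2$. Note also that~\eqref{metrics} is manifestly invariant under simultaneously swapping $(A,d_A)\leftrightarrow(B,d_B)$, which takes care of the ``similarly for $b'\to b$'' clauses throughout.

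My main tool for the two remaining axioms would be a reformulation of $g(d_A,d_B)$ as an \emph{intrinsic path metric}. Given compatible $d_A,d_B$ on a bicovering $A\cup B=C$, I would set, for $x,z\in C$,
\[
\hat d(x,z)\defin\inf\sum_{i=0}^{N-1}\delta_i(p_i,p_{i+1}),
\]
the infimum ranging over finite chains $x=p_0,\ldots,p_N=z$ in $C$ in which each consecutive pair lies entirely in $A$ or entirely in $B$, with $\delta_i$ the corresponding metric. The first step is to prove $\hat d=g(d_A,d_B)$: the inequality $\hat d\leq g$ is clear since the two- and three-point chains already realise all four cases of~\eqref{metrics}, while for $\hat d\geq g$ one uses the triangle inequalities in $d_A$ and $d_B$ together with $d_A=d_B$ on $A\cap B$ to merge any chain into one that changes region at most once, i.e.\ passes through a single intermediate point of $A\cap B$. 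This reformulation makes the remaining axioms transparent, since $\hat d$ is visibly monotone and stable under restriction: restricting $\hat d$ to $A$ returns exactly $d_A$ (a chain dipping into $B$ and returning is never shorter than the direct $d_A$-distance), consistently with Lemma~\ref{recover}.

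With the path-metric description in hand, the back-and-forth axiom amounts to the \emph{idempotence} of gluing: for $A'\subseteq A\subseteq C$ with $A'\cup B=C$, writing $d_C=g(d_{A'},d_B)$ and $d_A=d_C|_A$, I must show $g(d_A,d_B)=d_C$. In path-metric terms this says that enlarging the ``$A$-side'' from $A'$ to $A=A'\cup(A\cap B)$ and re-gluing creates no new shortcuts: every chain realising $g(d_A,d_B)$ expands, using $d_A=\hat d|_A$, into a chain for $d_C$, and conversely. I expect \textbf{this collapse of the iterated infimum to be the main obstacle}, as one must establish both inequalities, applying the triangle inequality twice and using that the shortest paths out of $A'$ already exit through $A'\cap B\subseteq A\cap B$. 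Finally, for partial naturality I would unwind a morphism of bicoverings (Definition~\ref{morphbicov}) in $\Sets$: the two completing maps $A\cap B\to A'$ and $A\cap B\to B'$ force the intersections to coincide, $A'\cap B'=A\cap B$, so the intermediate points through which $\hat d$ takes its shortest paths are the same on both bicoverings and all relevant distances agree after restriction. Gluing therefore commutes with the restriction maps, which is once more a short chain-comparison argument of the same flavour as the back-and-forth step.
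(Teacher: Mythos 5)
Your proposal is correct in substance and verifies the same three axioms as the paper, but it organizes the back-and-forth step differently. The paper proves back-and-forth by a direct computation: it writes $g\bigl(g(d_{A'},d_B)_{|A},\,d_B\bigr)(x,y)$ as a nested infimum over $w\in A\cap B$ and $w'\in A'\cap B$, exchanges the two infima, and collapses $\inf_{w\in A\cap B}\bigl[d_B(w',w)+d_B(w,y)\bigr]$ to $d_B(w',y)$ by the triangle inequality, the infimum being attained at $w=w'$ (which lies in $A\cap B$ because $A'\subseteq A$). Your path-metric lemma $\hat d=g(d_A,d_B)$ packages exactly these two moves---merging consecutive edges lying in the same region, and relabelling an edge between two points of $A\cap B$ from $d_B$ to $d_A$ using compatibility---into a single preliminary statement; once it is in place, the idempotence reading of back-and-forth is indeed transparent, and the ``main obstacle'' you identify is precisely the infimum collapse the paper carries out explicitly. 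So the route is a clean conceptual repackaging at the cost of one extra lemma, and the identity and partial naturality axioms are handled essentially as in the paper.

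One correction to the naturality step: a morphism of bicoverings does \emph{not} force $A'\cap B'=A\cap B$; these are subsets of the different sets $C'$ and $C$, and $q$ need not be injective. What Definition~\ref{morphbicov} actually provides, via the right inverse of the induced map $A'\cap B'\to A\cap B$, is that $q_{|A'\cap B'}:A'\cap B'\to A\cap B$ is \emph{surjective}. That is exactly the input you need: the infimum over $w'\in A'\cap B'$ of $d_A(q(x),q(w'))+d_B(q(w'),q(y))$ then coincides with the infimum over $w\in A\cap B$ of $d_A(q(x),w)+d_B(w,q(y))$, because every $w$ is hit by some $w'$. Without surjectivity the first infimum could be strictly larger (even $\infty$, e.g.\ when $A'\cap B'=\emptyset$ but $A\cap B\neq\emptyset$), so this hypothesis is doing real work and should be invoked as such rather than as an identification of the two intersections.
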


Upon base change (Example~\ref{basegleaves}) along the inclusion functor $\Delta\to\Sets$, one recovers the compository of metric spaces from Section~\ref{metspacesI}.

\begin{proof}
The identity axiom holds trivially: if $A=C$ or $B=C$, then the first or last case of~\eqref{metrics} always applies.

In the back-and-forth axiom, we also have $A'\subseteq A$ with $A'\cup B=C$, start with metrics $d_{A'}$ and $d_B$, and need to show that
$$
g(d_{A'}, d_B)=g\mleft(g(d_{A'},d_B)|_{A}, d_B\mright).
$$
We exemplify the proof of this by evaluating on $x\in A'$ and $y\in B$. In this case, the right-hand side becomes
\begin{align*}
g\mleft(g(d_{A'},d_B)|_{A}, d_B\mright)(x,y)&=\inf_{w\in A\cap B}\mleft[g(d_{A'}, d_B)(x,w)+d_B(w,y)\mright]\\
&=\inf_{w\in A\cap B}\mleft[\inf_{w'\in A'\cap B}\mleft[d_{A'}(x,w')+d_B(w', w)\mright]+d_B(w,y)\mright]\\
&=\inf_{w'\in A'\cap B}\mleft[d_{A'}(x,w')+\inf_{w\in A\cap B}\mleft[d_B(w', w)+d_B(w,y)\mright]\mright]\\
&=\inf_{w'\in A'\cap B}\mleft[d_{A'}(x,w')+d_B(w',y)\mright] = g(d_{A'},d_B).
\end{align*}
The other half of the back-and-forth axiom with $B'\subseteq B$ is entirely analogous.

In the partial naturality axiom, we have another bicovering $C'=A'\cup B'$ and a map $q:C'\rightarrow C$ such that $q(A')\subseteq A$, $q(B')\subseteq B$ and $q|_{A'\cap B'}:A'\cap B'\rightarrow A\cap B$ is surjective. In our right-action notation, we then want to show that
$$
	g(d_A, d_B) q=g\mleft(d_A\, q|_{A'}, d_B\, q|_{B'}\mright).
$$
Again we sketch part of the proof of this by evaluating on $x\in A'$ and $y\in B'$. The right-hand side becomes
$$
	g\mleft(d_A\, q|_{A'}, d_B\, q|_{B'}\mright)(x,y)=\inf_{w'\in A'\cap B'}\mleft[d_A(q(x),q(w'))+d_B(q(w'), q(y))\mright],
$$
while the left-hand side looks like
$$
\mleft(g(d_A, d_B) q\mright)(x,y)=\inf_{w\in A\cap B}\mleft[d_A(q(x),w)+d_B(w, q(y))\mright].
$$
The claim follows from surjectivity of $q$ on the intersection.
\end{proof}

In combination with Example~\ref{basegleaves} and the fact that the inclusion functor $\Xi:\Delta\rightarrow \Sets$ preserves bicoverings and pullbacks, we have thus also proven Proposition~\ref{metriccomp}.

\subsection{Probability distributions, take II}\label{take2}

A similar development is possible for joint probability distributions: the example of Section~\ref{secjpd} can be turned into a gleaf on $\sC=\FinSets$ with values in $\mathsf{D}=\Sets$ in pretty much the same way as the metric space example. As in $\Sets$, we take the bicoverings on $\FinSets$ to be those cospans which have injective and jointly surjective legs.

As before, we fix a finite set of outcomes $O$, and now consider the functor
\begin{align*}
\mathcal{P}:\FinSets^{\op}&\rightarrow \Sets\\[5.pt]
A&\mapsto\{P_A\text{ probability measure on } O^A\}\\[5.pt]
\mleft(\xymatrix{A\ar[r]^f & B}\mright)&\mapsto\mleft(\vcenter{\vbox{\xymatrix@R=.2cm@C=-1.7cm{ \mathcal{P}(B)\ar[rr] && \mathcal{P}(A) \\ P_B\ar@{|->}[rr] && P_A \\ & P_A(\alpha)\defin \sum\limits_{\beta\in O^B \text{ s.t. }\beta f =\alpha}P_B(\beta)}}}\mright)
\end{align*}

The gluing operation on a bicovering~\eqref{bicoverII} is given by
\begin{align*}
g:\mathcal{P}(A)\times_{ \mathcal{P}(A\times_C B)} \mathcal{P}(B)&\rightarrow  \mathcal{P}(C) \\[5.pt]
(P_A, P_B)&\mapsto g(P_A, P_B)\\[5.pt]
g(P_A, P_B)(\gamma)&=\frac{P_A(\gamma|_{A})P_B(\gamma|_{B})}{P_{A\cap B}(\gamma|_{A\cap B})},
\end{align*}
where $P_{A\cap B}$ stands for $P_A|_{A\cap B}=P_B|_{A\cap B}$.
Concerning the case of vanishing denominator and the proof showing that this is a normalised distribution, statements analogous to those made in the compository version after~\eqref{probcompose} can be made.

The resulting $g(P_A,P_B)$ is the unique probability distribution which makes the variables in $A$ conditionally independent of those in $B$ given the values of those in $A\cap B$. In particular, we have $g(P_A,P_B)|_A = P_A$ and $g(P_A,P_B)|_B = P_B$ by construction.

\begin{Proposition}
With these definitions, $( \mathcal{P},g)$ is a gleaf.
\end{Proposition}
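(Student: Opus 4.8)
The plan is to verify the three axioms of Definition~\ref{defbiggleaf} — identity, back-and-forth, and partial naturality — in close parallel with the metric-space proof of the previous subsection, with the conditional-independence formula playing the role that shortest paths played there and with division replacing the infimum of sums. Well-definedness across the overlapping cases of the restriction maps, and the fact that $g(P_A,P_B)$ is a normalised distribution, have already been settled in the discussion following~\eqref{probcompose}. Throughout I would make free use of the characterisation noted above, that $g(P_A,P_B)$ is the unique distribution on $O^C$ with marginals $P_A$ on $O^A$ and $P_B$ on $O^B$ under which the variables in $A$ and those in $B$ are conditionally independent given those in $A\cap B$; this renders several of the marginal computations transparent.

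For the identity axiom, suppose $A=C$, so that $B\subseteq A$ and $A\cap B=B$. Then $\gamma_{|A}=\gamma$ and $\gamma_{|A\cap B}=\gamma_{|B}$, and the defining formula collapses to $g(P_A,P_B)(\gamma)=P_A(\gamma)P_B(\gamma_{|B})/P_B(\gamma_{|B})=P_A(\gamma)$, i.e.\ $g=\pi_1$; the convention that a vanishing denominator forces the value $0$ is consistent with this, since in that case also $P_A(\gamma)=0$. The case $B=C$ is symmetric and yields $g=\pi_2$.

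For the back-and-forth axiom I would abbreviate $Q\defin g(P_{A'},P_B)$, a distribution on $O^C$ with $A'\subseteq A$ and $A'\cup B=C$, and reduce the required identity $g(Q_{|A},P_B)=Q$ to the ratio identity
\[
\frac{Q_{|A}(\gamma_{|A})}{Q_{|A\cap B}(\gamma_{|A\cap B})}=\frac{P_{A'}(\gamma_{|A'})}{P_{A'\cap B}(\gamma_{|A'\cap B})}
\]
obtained after cancelling the common factor $P_B(\gamma_{|B})$ (the zero locus being handled by the same convention). This I would settle by computing the two marginals of $Q$ directly from its definition, marginalising $Q$ over the variables in $C\setminus A$ and over those in $C\setminus(A\cap B)$ and simplifying the resulting sums using $A'\cup B=C$. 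The symmetric half, with $B'\subseteq B$, is entirely analogous.

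The main obstacle is the partial naturality axiom, where the contravariant action $q^*$ is marginalisation along the fibres of $q\colon C'\to C$, and one must show that marginalising the glued distribution agrees with gluing the marginalised pieces. Evaluating both sides on a tuple $\gamma'\in O^{C'}$, the left-hand side is the fibre sum $\sum_{\gamma\colon\gamma\circ q=\gamma'}P_A(\gamma_{|A})P_B(\gamma_{|B})/P_{A\cap B}(\gamma_{|A\cap B})$, whereas the right-hand side is the conditional-independence formula built from $(q|_{A'})^*P_A$ and $(q|_{B'})^*P_B$. The place where division interacts badly with summation is precisely controlled by the hypothesis that $q|_{A'\cap B'}\colon A'\cap B'\to A\cap B$ is surjective: this forces $\gamma_{|A\cap B}$ to be constant along each fibre and determined by $\gamma'_{|A'\cap B'}$, so the denominator $P_{A\cap B}(\gamma_{|A\cap B})$ pulls out of the sum, after which the numerator factors as a product of an $A$-sum and a $B$-sum that are exactly the marginals appearing on the right. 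I expect this factorisation to be the only genuinely delicate step; the remaining bookkeeping is routine.
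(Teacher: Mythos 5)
Your proposal is correct and follows essentially the same route as the paper's proof: direct verification of the three axioms by explicit computation, using the marginalisation identity coming from $A'\cup B=C$ for the back-and-forth axiom and the surjectivity of $q$ on $A'\cap B'$ to pull the denominator out of the fibre sum and factor the numerator in the naturality axiom. Your reorganisation of the back-and-forth step as a cancellation-and-ratio identity is only a cosmetic difference from the paper's direct expansion.
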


A variant of this result---with $\FinSets$ replaced by the lattice of finite subsets of a fixed set---is due to Dawid and Studen\'y~\cite[Proposition~3.1]{DS}. Upon base change (Example~\ref{basegleaves}) along the inclusion functor $\Delta\to\FinSets$, one recovers the compository of joint probability distributions from Section~\ref{secjpd}.

\begin{proof}
The identity axiom means that when $B\subseteq A$, then $g(P_A, P_B)=P_A$ for any compatible pair $(P_A, P_B)$. This is indeed the case since
$$
g(P_A, P_B)(\alpha)=\frac{P_A(\alpha)P_B(\alpha|_{B})}{P_{A\cap B}(\alpha|_{A\cap B})}=\frac{P_A(\alpha)P_B(\alpha|_{B})}{P_{B}(\alpha|_{B})}=P_A(\alpha).
$$

For the back-and-forth axiom, we need to show that whenever $A'\subseteq A$ such that $A'\cup B=A\cup B=C$, then 
$$
g(P_{A'}, P_B)=g\mleft(g(P_{A'},P_B)|_{A},P_B\mright).
$$
Evaluating the right-hand side on some $\gamma\in O^C$ results in
\begin{align*}
g\big(g(P_{A'},P_B)|_{A},P_B\big)(\gamma)&=g(P_{A'},P_B)|_{A}(\gamma|_{A})\cdot\frac{P_{B}(\gamma|_{B})}{P_{A\cap B}(\gamma|_{A\cap B})}
\\[5pt]
&=\sum\limits_{\gamma'\in O^C\text{ s.t. } \gamma'|_{A}=\gamma|_{A}}\frac{P_{A'}(\gamma'|_{A'})P_{B}(\gamma'|_{B})}{P_{A'\cap B}(\gamma'|_{A'\cap B}) }\cdot\frac{P_B(\gamma|_{B})}{P_{A\cap B}(\gamma|_{A\cap B})}\\[5pt]
&=\frac{P_{A'}(\gamma|_{A'})P_B(\gamma|_{B})} {P_{A'\cap B}(\gamma|_{A'\cap B}) P_{A\cap B}(\gamma|_{A\cap B})}\cdot \sum\limits_{\gamma'\in O^C\text{ s.t. } \gamma'|_{A}=\gamma|_{A}} P_{B}(\gamma'|_{B})
\\[5pt]
&=\frac{P_{A'}(\gamma|_{A'})P_B(\gamma|_{B})} {P_{A'\cap B}(\gamma|_{A'\cap B})}=g(P_{A'}, P_B)(\gamma)
\end{align*}
The fourth equality uses 
\[
P_{A\cap B}(\gamma|_{A\cap B})=\sum\limits_{\beta\in O^B\text{ s.t }\beta|_{A\cap B}=\gamma|_{A\cap B}}P_B(\beta)=\sum\limits_{\gamma'\in O^C\text{ s.t }\gamma'|_{A}=\gamma|_{A}}P_B(\gamma'|_{B}),
\]
which follows from the bijective correspondence between $\beta$ and $\gamma'$ (sheaf condition).

In the naturality axiom, we have another bicovering $C'=A'\cup B'$ and a map $q:C'\rightarrow C$ such that $q(A')\subseteq A$, $q(B')\subseteq B$ and $q|_{A'\cap B'}:A'\cap B'\rightarrow A\cap B$ is surjective. We then want to show that
$$
g(P_A, P_B)q=g\mleft(P_A\, q|_{A'}, P_B\, q|_{B'}\mright) .
$$
Evaluating the right-hand side on any $\gamma\in O^{C'}$ results in
\begin{align*}
	g\mleft(P_A\, q|_{A'}, P_B\, q|_{B'}\mright)(\gamma)&=\frac{(P_A q|_{A'})(\gamma|_{A'})\cdot (P_B q|_{B'})(\gamma|_{B'})}{(P_A q|_{A'})|_{A'\cap B'}(\gamma|_{A'\cap B'})}\\[5pt]
&=\frac{\sum\limits_{\alpha\in O^A\text{ s.t. }\alpha\,q|_{A'}=\gamma|_{A'}}P_A(\alpha)\sum\limits_{\beta\in O^B\text{ s.t. }\beta\,q|_{B'}=\gamma|_{B'}}P_B(\beta)}{\sum\limits_{\tau\in O^{A\cap B}\text{ s.t. }\tau\, q|_{A'\cap B'}=\gamma|_{A'\cap B'}}P_{A\cap B}(\tau)}\\[5pt]
&=\sum\limits_{\gamma'\in O^C\text{ s.t. }\gamma' q=\gamma} \frac{ P_A(\gamma'|_{A}) P_B(\gamma'|_{B})}{P_{A\cap B}(\gamma'|_{A\cap B})}=\mleft(g(P_A, P_B)q\mright)(\gamma)
\end{align*}
The third equality uses surjectivity of $q$ on the intersection, which implies that there is at most one $\tau$ satisfying the condition $\tau q|_{A'\cap B'}=\gamma|_{A'\cap B'}$, as well as the sheaf condition to unify the two sums.
\end{proof}

By applying a change of base as defined in Example~\ref{basegleaves} along the inclusion functor $\Delta\hookrightarrow \FinSets$, we obtain a proof of Proposition~\ref{probabilities}.

\subsection{Relational databases}
\label{reldatabase}

The probability distributions example of the previous subsection is formulated in terms of real-valued probabilities. However, all that we have used is that probabilities are elements of a semifield $(K,+,\cdot)$ which additionally satisfies $x+y=0\:\Rightarrow\: x=0$. Consequently, the probability distributions example makes sense over any such $K$. A particularly interesting instance of this is the Boolean semifield $(\{0,1\}, \lor, \land)$: if we interpret a value of $0$ as ``impossible'' and a value of $1$ as ``possible'', a distribution with values in the Boolean semifield can be thought of as a ``possibility distribution''. Since such a distribution is determined by the subset of those outcomes to which it assigns $1$, it can equivalently be regarded as a (non-empty) \emph{relation} whose arity is given by the number of variables involved.

This is essentially what is studied in the theory of \emph{relational databases}~\cite{database}. In order not to be repetitive, we will not discuss the gleaf of possibility distributions in any more detail, but rather explain the intimately related gleaf of relations in the language of database theory, following the exposition of~\cite{Abram2}. 

We fix a set $\mathcal{A}$, thought of as a universe of attributes. In contrast to before, where we assumed all variables to take values in the same set of outcomes, we now allow each $a\in \mathcal{A}$ to take values in a different and possibly infinite set $D_a$. Consider the lattice $2^{\mathcal{A}}$ of finite subsets of $\mathcal{A}$ ordered by inclusion. For a finite set of attributes $A\in 2^{\mathcal{A}}$, an $A$-relation $T_A$ is a subset of $\prod_{a\in A}D_a$. We think of such a $T_A$ as a table in a database whose columns are the attributes $a\in A$.
The assignment
\begin{eqnarray*}
\mathcal{R}:(2^{\mathcal{A}})^{\op}&\rightarrow& \Sets\\
A&\mapsto& \{\text{ $A$-relations }T_A\;\}
\end{eqnarray*}
is a presheaf where the restriction maps are given by the operation of projecting subsets along $\prod_{a\in A}D_a\to\prod_{a\in A'}D_a$ for $A'\subseteq A$. The gluing operation then takes the form
\begin{eqnarray*}
g:\mathcal{R}(A)\times_{\mathcal{R}(A\cap B)}\mathcal{R}(B)&\longrightarrow&\mathcal{R}(A\cup B) \\
(T_A, T_B)&\longmapsto & T_{A\cup B} \defin \left\{r\in\prod_{a\in A\cup B} D_a\;\bigg{|}\;r|_{A}\in T_A,\;r|_{B}\in T_B\right\},
\end{eqnarray*}
which is the \emph{natural join} from the theory of relational databases~\cite{database}.

\begin{Proposition}
The pair $(\mathcal{R}, g)$ is a gleaf. 
\end{Proposition}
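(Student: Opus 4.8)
The plan is to verify directly the three axioms of Definition~\ref{defbiggleaf}, exploiting first that $\mathbf{Att}$ is itself a distributive lattice, so that the relevant pullback $A\times_C B$ is simply $A\cap B$, the domain of $g$ is the set of \emph{compatible} pairs $(T_A,T_B)$ with $T_{A|A\cap B}=T_{B|A\cap B}$, and the codomain is $\mathcal{R}(A\cup B)$. Since the natural join of two relations is visibly again a relation, $g$ is well defined, so all that remains are the identity, back-and-forth and partial naturality axioms. Each reduces to an elementary double-inclusion argument on tuples, entirely parallel to (but simpler than) the proof for $\mathcal{P}$ in Section~\ref{take2}: a relation is nothing but a probability distribution valued in the Boolean semifield, and the natural join is the corresponding Boolean instance of~\eqref{probcompose}, with the division becoming vacuous.

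For the identity axiom, a bicovering of the first degenerate form has $C=A$ and $B\subseteq A$, so $A\cap B=B$ and compatibility reads $T_{A|B}=T_B$. Then $g(T_A,T_B)=\{\,r\in\textstyle\prod_{a\in A} D_a\mid r\in T_A,\ r_{|B}\in T_B\,\}$, and the second condition is automatic because $r\in T_A$ forces $r_{|B}\in T_{A|B}=T_B$; hence $g(T_A,T_B)=T_A=\pi_1$, as required, and the second form is symmetric. For the back-and-forth axiom, take $A'\subseteq A$ with $A'\cup B=A\cup B=C$ and a compatible pair $(T_{A'},T_B)$; writing $T\defin g(T_{A'},T_B)$ I would show $g(T_{|A},T_B)=T$ by double inclusion. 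The inclusion $T\subseteq g(T_{|A},T_B)$ is immediate, and for the reverse one uses that any $r$ with $r_{|A}\in T_{|A}$ is witnessed by some $\tilde r\in T$ with $\tilde r_{|A}=r_{|A}$; since $A'\subseteq A$ this gives $r_{|A'}=\tilde r_{|A'}\in T_{A'}$, which together with $r_{|B}\in T_B$ places $r$ in $T$. The analogous half of the axiom, with $B'\subseteq B$, follows by the symmetry of the join in its two arguments.

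The partial naturality axiom is the step requiring the most care, and is where the defining hypothesis of a morphism of bicoverings (Definition~\ref{morphbicov}) is used. In the poset $\mathbf{Att}$ the map $q$ is an inclusion $C'\subseteq C$ with $A'\subseteq A$, $B'\subseteq B$, $A'\cup B'=C'$, and the right-inverse condition on the induced map $A'\cap B'\to A\cap B$ collapses to the \emph{equality} $A'\cap B'=A\cap B$. I would prove $q^*\!\big(g(T_A,T_B)\big)=g(T_{A|A'},T_{B|B'})$ again by double inclusion. The inclusion of the left side into the right is a direct restriction computation. For the converse, given $r\in\prod_{a\in C'}D_a$ with witnesses $\alpha\in T_A$, $\alpha_{|A'}=r_{|A'}$ and $\beta\in T_B$, $\beta_{|B'}=r_{|B'}$, the task is to assemble a single tuple $s$ on $C=A\cup B$ with $s_{|A}=\alpha$, $s_{|B}=\beta$ and $s_{|C'}=r$. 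This gluing is possible precisely because $\alpha$ and $\beta$ agree on $A\cap B$: they certainly agree on $A'\cap B'$, both being equal to $r_{|A'\cap B'}$, and the hypothesis $A'\cap B'=A\cap B$ upgrades this to agreement on all of $A\cap B$. This is the crux of the argument and the only place where surjectivity on the intersection is essential.

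I expect the partial naturality step to be the main obstacle, since it is the one genuinely using the bicovering-morphism hypothesis; the identity and back-and-forth axioms are essentially bookkeeping. As an alternative to the explicit tuple manipulations, one could instead obtain $(\mathcal{R},g)$ as the Boolean-semifield specialization of the gleaf $\mathcal{P}$ transported to $\mathbf{Att}$ along the evident change of base (Example~\ref{basegleaves}), so that the argument of Section~\ref{take2} applies essentially verbatim.
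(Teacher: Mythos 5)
Your proposal follows essentially the same route as the paper: a direct element-wise verification of the gleaf axioms for the natural join, with each axiom reduced to a double inclusion of sets of tuples. (The paper phrases its verification in terms of the lattice axioms of Definition~\ref{littlegleaf} rather than Definition~\ref{defbiggleaf}, but on the poset $\mathbf{Att}$ these coincide; your slightly more general form of partial naturality, shrinking $A'\subseteq A$ and $B'\subseteq B$ simultaneously subject to $A'\cap B'=A\cap B$, follows from the paper's condition \ref{littlegleaf}\ref{littlegc} and its mirror image by composing. Your identity and partial-naturality arguments are correct; in particular, gluing the witnesses $\alpha$ and $\beta$ along $A\cap B=A'\cap B'$ is exactly the paper's argument.)

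There is, however, one small but genuine omission in the back-and-forth step. The axiom requires $g\bigl(T_{|A},\,T_{|B}\bigr)=T$ for $T=g(T_{A'},T_B)$, since the middle arrow of the diagram restricts $T$ to \emph{both} legs; you instead prove $g\bigl(T_{|A},\,T_B\bigr)=T$. These agree only once one knows $T_{|B}=T_B$. The inclusion $T_{|B}\subseteq T_B$ is trivial, but the reverse is not: it says every tuple of $T_B$ extends to a tuple of the join, which genuinely uses the compatibility hypothesis $T_{A'|A'\cap B}=T_{B|A'\cap B}$ (without it the claim fails, e.g.\ when $T_{A'}=\emptyset\neq T_B$). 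The paper proves exactly this claim as the first step of its back-and-forth argument; the repair is a few lines, so it does not affect the viability of your approach. Finally, a caveat on your proposed shortcut via the Boolean semifield: the specialization of $\mathcal{P}$ produces normalized (hence non-empty) relations over one fixed finite outcome set, whereas $\mathcal{R}$ admits the empty relation and attribute-dependent, possibly infinite domains $D_a$, so the transfer is not quite verbatim. Your direct verification is the safer route, and it is the one the paper takes.
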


A variant of this result---with $2^\mathcal{A}$ replaced by the lattice of finite subsets of $\mathcal{A}$---is due to Dawid and Studen\'y~\cite[Proposition~4.1]{DS}.

\begin{proof}
Condition~\ref{littlegleaf}\ref{littlega} is immediate. For condition~\ref{littlegleaf}\ref{littlegb}, we need to show that the diagram 
\be
\vxymatrix{ \mathcal{R}(A')\times_{\mathcal{R}(A'\cap B)}\mathcal{R}(B) \ar[rr]^{g} \ar[dd]_{g} && \mathcal{R}(A\cup B) \\\\
\mathcal{R}(A\cup B) \ar[rr] && \mathcal{R}(A)\times_{\mathcal{R}(A\cap B)}\mathcal{R}(B) \ar[uu]_{g} 
}\ee
commutes for $A'\subseteq A$ and $A'\cup B=A\cup B$. To this end, we first claim that
\[
g(T_{A'}, T_B)|_{B}=T_B.
\]
In fact, a given $r\in\prod_{a\in B} D_a$ lies in $g(T_{A'}, T_B)|_{B}$ iff there exists $r'\in\prod_{a\in A'\cup B} D_a$ such that $r'|_{B}=r$, $r'|_{A'}\in T_{A'}$ and $r'|_{B}\in T_B$. This is equivalent to requiring $r\in T_B$ and $r|_{A'\cap B}\in T_{A'|A'\cap B}$. By the compatibility condition $T_{A'|A'\cap B}=T_{B|A'\cap B}$, the second condition means that  $r|_{A'\cap B}\in T_{B|A'\cap B}$, which is automatic thanks to $r\in T_B$.

Second, we show that
\[
g(T_{A'}, T_B)|_{A}=\left\{r\in\prod_{a\in A}D_a\;\bigg|\;r|_{A'}\in T_{A'},\;r|_{A\cap B}\in T_{B|A\cap B}\right\}. 
\]
Indeed, a given $r\in  \prod_{a\in A} D_a$ lies in $g(T_{A'}, T_B)|_{A}$ iff there exists $r'\in\prod_{a\in A'\cup B} D_a$ such that $r'|_{A}=r$, $r'|_{A'}\in T_{A'}$ and $r'|_{B}\in T_B$. By $A'\subseteq A$, this is equivalent to requiring that $r|_{A'}\in T_{A'}$ and $r|_{A\cap B}\in T_{B|A\cap B}$, as claimed.

With these observations, we obtain
\begin{align*}
 g\mleft(g(T_{A'}, T_B)|_{A}, g(T_{A'}, T_B)|_{B}\mright)&= g\mleft(\left\{r\in\prod_{a\in A}D_a\;\bigg|\;r|_{A'}\in T_{A'},\;r|_{A\cap B}\in T_{B|A\cap B}\right\}, T_B\mright)\\[5pt]
&= \left\{r\in\prod_{a\in A\cup B}D_a\;\bigg|\;r|_{A'}\in T_{A'},\;r|_{A\cap B}\in T_{B|A\cap B},\; r|_{B}\in T_B\right\}\\[5pt]
&=\left\{r\in\prod_{a\in A\cup B}D_a\;\bigg|\;r|_{A'}\in T_{A'},\; r|_{B}\in T_B\right\}\\[5pt]
&=g(T_{A'}, T_B),
\end{align*}
as desired. A similar proof applies for $B'\subseteq B$ and $A\cup B=A\cup B'$.

We now consider condition \ref{littlegleaf}\ref{littlegc} for $A'\subseteq A$ and $A\cap B=A'\cap B$. We need to show that the diagram 
\[
\xymatrix{ \mathcal{R}(A) \times_{\mathcal{R}(A\cap B)}\mathcal{R}(B) \ar[dd]\ar[rr]^-{g} && \mathcal{R}(A\cup B) \ar[dd]\\\\
\mathcal{R}(A')\times_{\mathcal{R}(A\cap B)} \mathcal{R}(B) \ar[rr]^-{g} && \mathcal{R}(A'\cup B)
}\]
commutes. For a given $r\in \prod_{a\in A'\cup B}D_a$, we have $r\in g(T_{A}, T_B)|_{A'\cup B}$ iff there exists an $r'\in \prod_{a\in A\cup B}D_a$ such that $r'|_{A'\cup B}=r$, $r'|_{A}\in T_A$ and $r'|_{B}\in T_B$. Upon choosing $\widehat{r}=r'|_{A}$, this implies the existence of an $\widehat{r}\in T_A$ such that $\widehat{r}|_{A'}=r|_{A'}$ and $r|_{B}\in T_B$. Conversely, if this condition is satisfied, then $A\cap B=A'\cap B$ guarantees that $r\in  \prod_{a\in A'\cup B}D_a$ can be extended to an $r'\in \prod_{a\in A\cup B}D_a$ having the necessary properties.
The existence of $\widehat{r}$ is equivalent to $r|_{B}\in T_B$ and $r|_{A'}\in T_{A|A'}$.
\end{proof}

\subsection{Topological spaces}
\label{topological}

We expect that many geometrical structures, besides metric spaces, form gleaves of type $\Sets^{\op}\rightarrow\Sets$. One of these is the structure of carrying a topology, and this will be our last example of a gleaf.

Similar to our other examples, we define the presheaf as
\begin{align*}
\mathcal{T}:\Sets^{op}&\rightarrow\Sets\\[5pt]
A&\mapsto\mathcal{T}(A)\defin\left\{\tau\subseteq 2^{A}\;\big|\;\tau\text{ is a topology on } A\right\} \\[5pt]
\mleft(\xymatrix{A\ar[r]^f & B}\mright)&\mapsto\mleft(\vcenter{\vbox{\xymatrix@R=.2cm@C=0cm{ \mathcal{T}(B)\ar[rr] && \mathcal{T}(A) \\ \tau_B\ar@{|->}[rr] && \{f^{-1}(U)\;|\;U\in\tau_B\} }}}\mright).
\end{align*}

This presheaf is not a sheaf since there are triples of pairwise compatible local sections which cannot be consistently joined. In fact, consider three sets $U, V, W$, each with exactly two elements as in Figure~\ref{triangle}, and such that $U$ and $V$ have the indiscrete topology while $W$ has the discrete topology. Clearly, the subspace topologies on pairwise intersections are the same, but it is not possible to find a topology on $U\cup V\cup W$ which restricts to the given topologies on the individual sets: any open containing one point of $W$ has to contain $U\cap V$ as well, and hence also the other point of $W$. Nevertheless, we will show in the following that $\mathcal{T}$ can be turned into a gleaf.

Given the system of bicoverings for $\Sets$ defined in Section~\ref{takeII}, the gluing operation is:
\begin{align*}
g:\mathcal{T}(A)\times_{\mathcal{T}(A\times_C B)}\mathcal{T}(B)&\;\rightarrow\;\mathcal{T}(C)\\[5pt]
(\tau_A, \tau_B)&\;\mapsto\; g(\tau_A, \tau_B)\defin\left\{U\subseteq C\;\big|\; U\cap A\in \tau_A,\;U\cap B\in \tau_B\right\}.
\end{align*}
As defined, $g(\tau_A, \tau_B)$ clearly is a topology on $Z$.

\begin{Proposition}
With these definitions, $(\mathcal{T}, g)$ is a gleaf.
\end{Proposition}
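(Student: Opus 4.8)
The plan is to verify the three gleaf axioms of Definition~\ref{defbiggleaf} directly, in the same spirit as the proofs for the metric and probability gleaves: since everything is built from the concrete formula for $g$, each axiom reduces to an equality of two explicitly described collections of subsets of the relevant set, which I would establish by mutual inclusion, checking membership on representative elements. First I would record the two preliminary facts that make the rest routine. That $g(\tau_A,\tau_B)=\{U\subseteq C\mid U\cap A\in\tau_A,\ U\cap B\in\tau_B\}$ really is a topology on $C$ follows because $U\mapsto U\cap A$ and $U\mapsto U\cap B$ preserve arbitrary unions and finite intersections, so the defining conditions are inherited from $\tau_A$ and $\tau_B$. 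The identity axiom holds because when $B\subseteq A=C$ the compatibility condition $\tau_A{}_{|B}=\tau_B$ forces $U\cap B\in\tau_B$ automatically for every $U\in\tau_A$, whence $g(\tau_A,\tau_B)=\tau_A=\pi_1(\tau_A,\tau_B)$; the case $A\subseteq B=C$ is symmetric.

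For the back-and-forth axiom I would take $A'\subseteq A$ with $A'\cup B=A\cup B=C$ and compatible $\tau_{A'},\tau_B$, and show $g(\tau_{A'},\tau_B)=g\big(g(\tau_{A'},\tau_B)_{|A},\ \tau_B\big)$ as collections of subsets of $C$. Writing $\sigma=g(\tau_{A'},\tau_B)$, membership in either side requires $V\cap B\in\tau_B$, so it suffices to show $V\cap A'\in\tau_{A'}$ is equivalent to $V\cap A\in\sigma_{|A}$. The forward implication is immediate by taking $V$ itself as the witness in $\sigma$; the reverse uses only that $A'\subseteq A$, so that any $U\in\sigma$ with $U\cap A=V\cap A$ also satisfies $U\cap A'=V\cap A'$, giving $V\cap A'=U\cap A'\in\tau_{A'}$. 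The half with $B'\subseteq B$ is symmetric.

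The main obstacle is the partial naturality axiom, which concerns a second bicovering $C'=A'\cup B'$ with a map $q:C'\to C$ satisfying $q(A')\subseteq A$, $q(B')\subseteq B$ and $q_{|A'\cap B'}:A'\cap B'\to A\cap B$ surjective; I must show $q^*\big(g(\tau_A,\tau_B)\big)=g\big(q^*_{|A'}(\tau_A),\ q^*_{|B'}(\tau_B)\big)$. The inclusion from left to right is straightforward: if $V=q^{-1}(W)$ with $W\cap A\in\tau_A$ and $W\cap B\in\tau_B$, then $V\cap A'=(q_{|A'})^{-1}(W\cap A)$ lies in $q^*_{|A'}(\tau_A)$, and likewise on $B'$. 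The reverse inclusion is where the work lies: given $V$ with witnessing opens $U_A\in\tau_A$, $U_B\in\tau_B$ such that $V\cap A'=q^{-1}(U_A)\cap A'$ and $V\cap B'=q^{-1}(U_B)\cap B'$, I must produce a single $W$ on $C$ with $q^{-1}(W)=V$, $W\cap A\in\tau_A$ and $W\cap B\in\tau_B$. The crucial step is to observe that, because $V\cap A'$ and $V\cap B'$ agree on $A'\cap B'$ and $q_{|A'\cap B'}$ surjects onto $A\cap B$, the two witnesses must agree there: $U_A\cap(A\cap B)=U_B\cap(A\cap B)$. Once this is in hand I would set $W=U_A\cup U_B$; the agreement on the overlap yields $W\cap A=U_A\in\tau_A$ and $W\cap B=U_B\in\tau_B$, and the same agreement shows $q^{-1}(W)=(q^{-1}(U_A)\cap A')\cup(q^{-1}(U_B)\cap B')=V$, completing the proof. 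I expect the bookkeeping around this point, namely that agreement on the overlap forces $V$ to be $q$-saturated and to glue from a genuine open on $C$, to be the only genuinely delicate step; everything else is a mechanical inclusion check of the kind already carried out in the metric and probability cases.
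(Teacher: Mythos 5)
Your proposal is correct and follows essentially the same route as the paper: direct element-wise verification of the identity, back-and-forth, and partial naturality axioms from the explicit formula for $g$, with the reverse inclusion in naturality handled exactly as in the paper by deducing $U_A\cap(A\cap B)=U_B\cap(A\cap B)$ from surjectivity of $q$ on the overlap and then gluing the witnesses to $W=U_A\cup U_B$ with $q^{-1}(W)=V$. The only differences are cosmetic: where the paper first computes $g(\tau_{A'},\tau_B)_{|A}$ explicitly, you argue via the witnessing open $U$ directly, which is a slight streamlining of the same argument.
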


\begin{proof}
The identity axiom~\ref{defbiggleaf}\ref{identityax} holds trivially. Before proceeding with the proof of the other two axioms, we show that the glued topology indeed restricts to the two given ones,
\be
\label{henry}
g(\tau_A,\tau_B)|_{A}=\tau_A,\qquad g(\tau_A,\tau_B)|_{B}=\tau_B.
\ee
It is enough to prove the first equation. The inclusion $g(\tau_A,\tau_B)|_{A}\subseteq\tau_A$ is trivial, so we only need to show the other direction, which crucially relies on the assumed compatibility of the topologies, $\tau_{A|A\cap B}=\tau_{B|A\cap B}$. For any $V\in\tau_A$, we have $V\cap B\in\tau_{A|A\cap B}=\tau_{B|A\cap B}$, so that there exists $W\in\tau_B$ with $W\cap A=V\cap B$. Then putting $U=V\cup W$ gives $U\cap A=(V\cap A)\cup (W\cap A)=V\cup (V\cap B)=V$, and similarly $U\cap B=W$, which shows that $U\in g(\tau_A,\tau_B)$.

For the back-and-forth axiom~\ref{defbiggleaf}\ref{BFax}, we need to show that for any $A'\subseteq A$ with $A'\cup B=A\cup B=C$, the diagram 
\be\label{bftop}\vxymatrix{
\mathcal{T}(A')\times_{\mathcal{T}(A'\times_C B)}\mathcal{T}(B)\ar[r]^-g\ar[d]_g&\mathcal{T}(C)\\
\mathcal{T}(C)\ar[r]&\mathcal{T}(A)\times_{\mathcal{T}(A\times_C B)}\mathcal{T}(B)\ar[u]_g
}\ee
commutes. In a manner similar to the proof of~\eqref{henry}, it can be shown that
\[
g (\tau_{A'}, \tau_B)|_{ A} = \left\{U'\subseteq A\;\big|\; U'\cap A'\in\tau_{A'},\; \exists V\in\tau_B\text{ s.t. }U'\cap B=V\cap A\right\} .
\]
This lets us evaluate the ``long'' composition in~\eqref{bftop} to
\begin{align*}
g\mleft(g (\tau_{A'}, \tau_B)|_{ A}, g (\tau_{A'}, \tau_B)|_{B}\mright)&=g\mleft(\{U'\subseteq A\;\big|\; U'\cap A'\in\tau_{A'},\; \exists V\in\tau_B\text{ s.t. }U'\cap B=V\cap A\},    \tau_B\mright)\\[5pt]
&=\{U\subseteq C\;|\;U\cap A\cap A'\in\tau_{A'},\;\exists \;V\in \tau_B\text{ s.t. } U\cap A\cap B=V\cap A ,\;U\cap B\in\tau_B\}\\[5pt]
&=\{U\subseteq C\;|\;U\cap A'\in\tau_{A'},\; U\cap B\in\tau_{B}\}.
%\{U'\in Z\;|\;U'\cap Y\in \tau_Y, U'\cap X=\tau_{Z| X}\}
\end{align*}
For the last equality, we use that the inclusion ``$\subseteq $'' is trivial, while for the inverse inclusion ``$\supseteq$'' we simply pick $V\defin U\cap B$. Therefore $g\mleft(g (\tau_{A'}, \tau_B)|_{ A}, g (\tau_{A'}, \tau_B)|_{B}\mright)=g (\tau_{A'}, \tau_B)$, which is commutativity of the above diagram.

In the partial naturality axiom~\ref{defbiggleaf}\ref{natural}, we have another bicovering $C'=A'\cup B'$ and a map $q:C'\rightarrow C$ such that $q(A')\subseteq A$, $q(B')\subseteq B$ and $q|_{A'\cap B'}:A'\cap B'\rightarrow A\cap B$ is surjective. We need to show that the diagram 
\[\xymatrix{
\mathcal{T}(A)\times_{\mathcal{T}(A\times_C B)}\mathcal{T}(B)\ar[rr]^-g\ar[dd]&&\mathcal{T}(C)\ar[dd]\\\\
\mathcal{T}(A')\times_{\mathcal{T}(A'\times_{C'}B')}\mathcal{T}(B')\ar[rr]^-g&&\mathcal{T}(C')
}\]
commutes. The two compositions in this square evaluate to 
\begin{align}\begin{split}\label{split1}
g(\tau_A, \tau_B)q &= \{U\subseteq C\:|\:U\cap A\in\tau_A,\;U\cap B\in \tau_B\}q \\[5pt]
&=\{q^{-1}(U)\:|\:U\subseteq C,\;U\cap A\in\tau_A,\;U\cap B\in \tau_B\}
\end{split}
\end{align}
and 
\begin{align}\begin{split}\label{split2}
g(\tau_A\, q|_{A'},& \tau_B\, q|_{B'})=\{U'\subseteq C'\;|\;U'\cap A'\in \tau_A q|_{A'},\; U'\cap B'\in \tau_B q|_{B'}\}\\[5pt]
&=\{U'\subseteq C'\;|\;\exists V\in\tau_A\text{ s.t. }q^{-1}(V)\cap A'=U'\cap A',\\
&\qquad\qquad\qquad\exists W\in \tau_B\text{ s.t. }q^{-1}(W)\cap B'=U'\cap B'\:\},
\end{split}
\end{align}
respectively.
For a given $U'$ in~\eqref{split2} with associated $V$ and $W$ we obtain, using the surjectivity assumption $q^{-1}(A\cap B)=A'\cap B'$,
$$q^{-1}(W\cap A)=q^{-1}(W\cap A\cap B)=(U'\cap B')\cap (A'\cap B')=U'\cap A'\cap B'=\ldots=q^{-1}(V\cap B),$$ where the dots indicate steps analogous to the first half. Again by surjectivity of $q$ on the intersection, we can ``cancel'' $q^{-1}$, which results in $W\cap A= V\cap B$. Therefore $V\cup W\in g(\tau_A, \tau_B)$, since $(V\cup W)\cap A=V\cup(W\cap A)=V\in \tau_A$ and similarly $ (V\cup W)\cap B=(V\cap B)\cup W=W\in \tau_B$. Moreover,
$$
q^{-1}(V)\cap B'\subseteq q^{-1}(V\cap B) = q^{-1}(W\cap A) = U'\cap A'\cap B',
$$
and similarly $q^{-1}(W)\cap A' \subseteq U'\cap A'\cap B'$. Together with~\eqref{split2}, this is relevant for proving the third equality in
\begin{align*}
q^{-1}(V\cup & W)=q^{-1}(V)\cup q^{-1}(W) \\[5pt]
 &= (q^{-1}(V)\cap A')\cup (q^{-1}(V)\cap B')\cup (q^{-1}(W)\cap A') \cup (q^{-1}(W)\cap B') \\[5pt]
 & = (U'\cap A')\cup(U'\cap B')=U' .
\end{align*}
This shows that $U'$ is indeed an element of in $g(\tau_A, \tau_B)q$.

For the reverse inclusion of~\eqref{split1} in~\eqref{split2}, we start with any $q^{-1}(U)\in g(\tau_A, \tau_B)q$, so that we know that $V=U\cap A\in\tau_A$ and $W=U\cap B\in \tau_B$. Then $q^{-1}(V)\cap A'=q^{-1}(U\cap A)\cap A'=q^{-1}(U)\cap q^{-1}( A)\cap A'=q^{-1}(U)\cap A'$, and similarly $q^{-1}(W)\cap B'=q^{-1}(U)\cap B'$. Therefore $q^{-1}(U)\in g(\tau_A\, q|_{A'},\tau_B\, q|_{B'})$.
\end{proof}

\section{Further directions}\label{further}

Here we would like to present some possible ideas for future research.

\subsection{Compositories}

\paragraph{Nerves of categories and compatibility with face maps.}

As we saw in Section~\ref{nerves}, the nerve of a (small) category is a compository which satisfies compatibility of composition with all face maps, including~\eqref{facenot}, which typically does not hold in other compositories. The question is now if the converse is true: is a compository satisfying~\eqref{facenot} isomorphic to the nerve of a category?

\paragraph{Fundamental compositories and homotopy type theory.}

Recall the definition of the fundamental $\infty$-groupoid $\Pi(X)$, in its Kan complex version: 

\begin{Definition}[e.g.~\cite{model}]
Let $X$ be a topological space. The \emph{fundamental $\infty$-groupoid} $\Pi(X)$ is the simplicial set with $n$-simplices
\[
\Pi(X)(n) \defin \Top(\Delta_n,X),
\]
where $\Delta_n$ is the geometric realisation of the $n$-simplex.
\end{Definition}

Here, the face and degeneracy maps are induced from considering the assignment $[n]\mapsto \Delta_n$ as a functor $\Delta^{\op}\to\Tops^{\op}$. 

It is an interesting question whether $\Pi(X)$ carries the structure of a compository. The most natural way of turning $\Pi(X):\Delta^{\op}\to\Sets$ into a compository would be for the (co-)presheaf $\Delta^{\op}\to\Tops^{\op}$ to carry the structure of a gleaf, i.e.~if there existed a family of continuous maps
\[
\Delta_{m+n-k} \longrightarrow \Delta_m\amalg_{\Delta_k}\Delta_n
\]
satisfying a list of axioms dual to those for compositories.

On a related note, we might consider types in homotopy type theory~\cite{HoTTbook} rather than topological spaces. These types are usually regarded to be globular $\infty$-groupoids~\cite{Lumsd,BG} in the sense of Batanin and Leinster~\cite{Batanin,Leinster2}. Now a natural question is, is there a variant of homotopy type theory in which the types naturally carry the structure of a compository?

First of all, this would require the higher identity terms over a type to form a simplicial set rather than a globular set. Obtaining a simplicial set of higher identity terms will require a departure from the usual ``binary'' identity types $\id_A(a,b)$ taking two arguments only, allowing for the introduction of arbitrary $n$-ary identity types $\id_A(a_1,\ldots,a_n)$; such identity types can also be defined as inductive types generated by reflexivity terms. This leads to the question, what will be the resulting algebraic structure on the collection of all such (higher) identity terms?

\paragraph{Dagger compositories and symmetric compositories.}
In many of our examples of compositories $\sC$, there is a canonical way of assigning to a simplex $S\in\sC(m)$ other simplices of the same dimension which are obtained by permuting the vertices of $S$. For example in a compository of higher spans (Section~\ref{higherspans}), this applies to the ``mirror image'' $S^\dag$ of any higher span $S$ obtained by precomposing $S:\Sp_m\rightarrow\sC$ with the ``reflection''
$$
\Sp_m\rightarrow\Sp_m,\qquad (v,w)\mapsto (m-w,m-v).
$$
In this way, we expect that a compository of higher spans is actually a \emph{dagger compository}, just as an ordinary category of spans is a \emph{dagger category}~\cite{dagnlab}.

In our examples of metric spaces (Section~\ref{metspacesI}) and joint probability distributions (Section~\ref{secjpd}), there is even more symmetry: the vertices of any simplex can be permuted arbitrarily, and hence the symmetric group $S_m$ acts on the set of $m$-simplices $\sC(m)$. This should correspond to a notion of \emph{symmetric compository}, which we expect to be a gleaf of type $\FinSets^{\op}_{\neq\emptyset}\rightarrow\Sets$, where $\FinSets_{\neq\emptyset}$ stands for the category of non-empty finite sets.

\paragraph{Hyperstructures.} The hyperstructures of Baas~\cite{Baas} are mathematical abstractions of collections of systems which may form bonds, e.g.~chemical bonds between atoms, and also bonds between bonds etc. We imagine that compositories may provide an alternative to hyperstructures and/or a related approach to systems that have the potential to form bonds: upon taking the systems to be the $0$-simplices in a compository, and an $n$-system bond to be an $(n-1)$-simplex between its $n$ constituent systems as vertices, we might obtain a composition operation on those bonds which could turn them into a compository.

\paragraph{A self-referential theory of compositories?} The fundamental nature of category theory getsgets  reflected in the self-referential character of category theory, such as the fact that categories themselves form a category. Can one develop a theory of compositories in a similarly self-referential manner? For example, is it possible to define a sensible ``compository of compositories''? Doing this would require one to temporarily forget about category theory completely and to try and adapt a different mindset which moves the focus from \emph{functions} having a given domain and codomain to \emph{relations} of arbitrary finite arity in which the distinction between domain and codomain is blurred.

\subsection{Gleaves}

\paragraph{Gleaves and monoidal structures.} The definition of gleaf~\ref{defbiggleaf} closely resembles that of lax monoidal functors. Can this analogy be made more precise? If monoidal structures on categories generalise binary products, then what generalises pullbacks?

\paragraph{Gluing more than two local sections.} The gluing operation, which is part of the structure of a gleaf, glues pairs of local sections only; and, as we have frequently noted in the examples, three or more local sections can in general not be consistently extended at all. However, when the different sets or objects, on which the given local sections live, intersect in a certain particularly nice way, then a canonical extension is indeed possible and can be constructed from repeated application of the gluing operation in a unique way. The associativity result of Proposition~\ref{assocprop} is a particular example of this. See the work of Vorob'ev~\cite{V} for some ideas of how all this works in the case of the joint probability distributions gleaf $(\mathcal{P},g)$ of Section~\ref{take2}.

We plan to investigate all this in more detail in a follow-up to this article; this should contain in particular a generalisation of Vorob'ev's theorem and a theorem stating how any local section of a gleaf gives rise to a semi-graphoid~\cite{semigraphoids}, also generalising from the well-studied structure of the joint probability distributions example.

\paragraph{Gleaves and fibred categories.} In many of our examples of $\Sets$-valued gleaves, e.g.~in the metric space example (Section~\ref{takeII}), the values of the presheaf have actually more structure than merely being sets; we suspect that they really should be considered as posets or even categories. For example, we may say that two metrics $d_1,d_2$ on a set $X$ satisfy $d_1\leq d_2$ if and only if $d_1(x,y)\leq d_2(x,y)$ for all $x,y\in X$. Then, the gluing operation in the metric space example has a universal property: it arises as the smallest metric, relative to this ordering, which restricts to the given ones on the subsets. Do the gluing operations of all our examples of gleaves have universal properties? If so, does this imply that the notion of gleaf should not be considered fundamental in any sense? 

More generally, one can consider the category $\Mets$ of metric spaces (without the non-degeneracy axiom) and distance-nonincreasing functions as a fibred category over $\Sets$~\cite{Vistoli}. How does this relate to the gleaf of metric spaces? Under which conditions does a fibred category give rise to a gleaf?

\paragraph{Enriched categories as gleaves\footnote{This question is due to Jonathan Elliott.}.} On a related note, upon regarding metric spaces (without the symmetry and non-degeneracy axioms) as enriched categories~\cite{Lawv}, the putative universal property of the gluing operation is likely to correspond to a certain (co-)limit in the symmetric monoidal base category $\mleft([0,\infty),\geq,+\mright)$. Can this be generalised to enrichment over an arbitrary suitably \mbox{(co-)complete} symmetric monoidal small base category $\mathcal{V}$? More precisely, let $\Gamma_{\mathcal{V}}:\Sets^{\op}\to\Sets$ be the presheaf which assigns to every set $X$ the set of all $\mathcal{V}$-categories having $X$ as their underlying set of objects. Can this presheaf be turned into a gleaf in a natural way?

\paragraph{Gleaves as models of a sketch.} The proof of Theorem~\ref{johnstonetheorem} exhibits a gleaf over the base category considered there as a model of a sketch~\eqref{sketch}. Can this construction be generalised for gleaves over any small category with bicoverings?
Alternatively, we could also ask whether the category of gleaves over a small base is accessible~\cite{accessible}. 

\begin{bibdiv}
\begin{biblist}

\bib{database}{book}{
author = {Abiteboul, Serge},
author = {Hull, Richard},
author = {Vianu, Victor},
title = {Foundations of Databases},
publisher = {Addison Wesley},
year = {1995},
}

\bib{Abram}{incollection}{
author = {Abramsky, Samson},
title = {Retracing some paths in process algebra},
booktitle = {C{ONCUR} '96: concurrency theory {{P}isa}},
series = {Lecture Notes in Comput. Sci.},
volume = {1119},
pages = {1--17},
publisher = {Springer},
year = {1996},
}

\bib{Abram2}{misc}{
author = {Abramsky, Samson},
title = {Relational Databases and {B}ell's Theorem},
booktitle = {In search of elegance in the theory and practice of computation},
series = {Lecture Notes in Comput. Sci.},
volume = {8000},
pages = {13--35},
publisher = {Springer, Heidelberg},
year = {2013},
}

\bib{AB}{article}{
  author={Abramsky, Samson},
 author = {Brandenburger, Adam},
  title={The sheaf-theoretic structure of non-locality and contextuality},
  journal={New Journal of Physics},
  volume={13},
  number={11},
  pages={113036},
  year={2011},
}

\bib{accessible}{book}{
author={ Ad\'{a}mek, Ji\v{r}\'{i}},
author={Rosick\'{y}, Ji\v{r}\'{i}},
title={Locally presentable and accessible categories},
publisher={Cambridge University Press},
year={1994}
}

\bib{Baas}{article}{
author = {Baas, Nils},
title = {On structure and organization: an organizing principle},
journal = {International Journal of General Systems},
volume = {42},
number = {2},
year = {2013},
}

\bib{clark}{misc}{
author={Barwick, Clark},
author={Schommer-Pries, Christopher},
title={On the Unicity of the Homotopy Theory of Higher Categories},
note={\href{http://arxiv.org/abs/1112.0040}{arXiv:1112.0040}},
year={2011},
}

\bib{Batanin}{article}{
author = {Batanin, Michael},
title = {Monoidal globular categories as a natural environment for the theory of weak $n$-categories},
journal = {Advances in Mathematics},
volume = {136},
number = {1},
pages = {39--103},
year = {1998},
}

\bib{Benabou}{incollection}{
author = {B{\'e}nabou, Jean},
title = {Introduction to bicategories},
booktitle = {Reports of the {M}idwest {C}ategory {S}eminar},
pages = {1--77},
publisher = {Springer},
address = {Berlin},
year = {1967},
}

\bib{BG}{article}{
author = {van den Berg, Benno},
author = {Garner, Richard},
title = {Types are weak $\omega$-groupoids},
journal = {Proc. London Math. Soc.},
volume = {3},
number = {102},
pages = {370--394},
year = {2011},
}

\bib{DS}{incollection}{
author = {Dawid, Alexander Philip},
author = {Studen{\'y}, Milan},
title = {Conditional products: An alternative approach to conditional independence},
booktitle = {Artificial Intelligence and Statistics '99},
year = {1999},
pages = {32--40},
}

\bib{Dob}{incollection}{
author = {Doberkat, Ernst-Erich},
title = {The converse of a stochastic relation},
booktitle = {Foundations of software science and computation structures},
series = {Lecture Notes in Comput. Sci.},
volume = {2620},
pages = {233--249},
publisher = {Springer},
year = {2003},
}

\bib{Engenes}{article}{
author = {Engenes, Hans},
title = {Subobject classifiers and classes of subfunctors},
journal = {Math. Scand.},
volume = {84},
year = {1974},
pages = {145--152},
}

\bib{Fried}{article}{
author = {Friedman, Greg},
title = {Survey article: an elementary illustrated introduction to simplicial sets},
journal = {Rocky Mountain J. Math.},
volume = {42},
number = {2},
pages = {353--423},
year = {2012},
}

\bib{FC}{article}{
author = {Fritz, Tobias},
author = {Chaves, Rafael},
title = {Entropic inequalities and marginal problems},
journal = {IEEE Trans. Inform. Theory},
volume = {59},
number = {2},
pages = {803--817},
year = {2013},
}

\bib{semigraphoids}{article}{
author = {Geiger, Dan},
author = {Pearl, Judea},
title = {Logical and algorithmic properties of conditional independence and graphical models},
journal = {The Annals of Statistics},
volume = {21},
number = {4},
year = {1993},
pages = {2001--2021},
}

\bib{carres}{article}{
author={Guitart, Ren{\'e}},
title={Relations et Carr{\'e}s Exacts},
journal={Ann. sc. math. Qu{\'e}bec},
volume={IV},
number={2},
pages={103--125},
year={1980},
}

\bib{grandis}{article}{
author = {Grandis, Marco},
title = {Higher cospans and weak cubical categories (Cospans in algebraic topology, I)},
journal = {Theory Appl. Categ.},
volume = {18},
number = {12},
pages = {321--347},
year = {2007},
}

\bib{haugseng}{misc}{
author = {Haugseng, Rune},
title = {Iterated spans and ``classical'' topological field theories},
note = {\href{http://arxiv.org/abs/1409.0837}{arXiv:1409.0837}},
year = {2014},
}

\bib{model}{book}{
author = {Hovey, Mark},
title = {Model Categories},
publisher = {American Mathematical Society},
year = {1999},
series={Mathematical Surveys and Monographs},
volume={63}
}

\bib{stone}{book}{
author = {Johnstone, Peter T.},
title = {Stone Spaces},
publisher = {Cambridge University Press},
year = {1982},
}

\bib{Johnstone1}{article}{
author={Johnstone, Peter T.},
title={Collapsed toposes and cartesian closed varieties},
journal={J. Algebra},
volume={129},
pages={446--480},
year={1990}
}

\bib{elephant}{book}{
author = {Johnstone, Peter T.},
title = {Sketches of an Elephant: A Topos Theory Compendium I, II},
publisher = {Oxford University Press},
year = {2002},
}

\bib{Joyal}{article}{
author = {Joyal, Andr{\'e}},
title = {Quasi-categories and {K}an complexes},
note = {Special volume celebrating the 70th birthday of Professor Max Kelly},
journal = {J. Pure Appl. Algebra},
volume = {175},
number = {1-3},
pages = {207--222},
year = {2002},
}

\bib{kv}{misc}{
author = {Kissinger, Aleks},
author = {Vicary, Jamie},
title = {Globular},
note = {A proof assistant for higher category theory, based on a definition of semistrict $4$-category sketched at \href{https://ncatlab.org/nlab/show/Globular\#singularities}{https://ncatlab.org/nlab/show/Globular\#singularities} and \href{https://nforum.ncatlab.org/discussion/6829/globular/?Focus=56460\#Comment_56460}{https://nforum.ncatlab.org/discussion/6829/globular/?Focus=56460\#Comment\_56460}, retrieved on 02/10/2016.},
year = {2016},
}

\bib{adhesive}{incollection}{
author = {Lack, Stephen},
author = {Soboci{\'n}ski, Pawe{\l}},
title = {Adhesive categories},
booktitle = {Foundations of software science and computation structures},
series = {Lecture Notes in Comput. Sci.},
volume = {2987},
pages = {273--288},
publisher = {Springer, Berlin},
year = {2004},
}

\bib{Lawv}{article}{
author = {Lawvere, F.~William},
title = {Metric spaces, generalized logic, and closed categories},
journal = {Reprints in Theory and Applications of Categories},
number = {1},
pages = {1--37},
year = {2002},
}

\bib{Leinster2}{article}{
author = {Leinster, Tom},
title = {A survey of definitions of $n$-category},
journal = {Theory Appl. Categ.},
volume = {10},
number = {1},
pages = {1--70},
year = {2002},
}

\bib{Leinster}{book}{
author = {Leinster, Tom},
title = {Higher operads, higher categories},
series = {London Mathematical Society Lecture Note Series},
volume = {298},
publisher = {Cambridge University Press},
year = {2004},
pages = {xiv+433},
}

\bib{Lumsd}{article}{
author = {Lumsdaine, Peter LeFanu},
title = {Weak $\omega$-categories from intensional type theory},
journal = {Logical Methods in Computer Science},
volume = {6},
number = {3:24},
pages = {1--19},
year = {2010},
}

\bib{MacLane}{book}{
author = {Mac Lane, Saunders},
title = {Categories for the working mathematician},
series = {Graduate Texts in Mathematics},
volume = {5},
edition = {Second edition},
publisher = {Springer-Verlag},
address = {New York},
year = {1998},
}

\bib{MM}{book}{
author = {Mac Lane, Saunders},
author = {Moerdijk, Ieke},
title = {Sheaves in geometry and logic},
series = {Universitext},
note = {A first introduction to topos theory, Corrected reprint of the 1992 edition},
publisher = {Springer-Verlag},
address = {New York},
year = {1994},
}

\bib{dagnlab}{misc}{
author = {nLab},
title = {dagger-category},
note = {\href{http://ncatlab.org/nlab/show/dagger-category}{ncatlab.org/nlab/show/dagger-category}, retrieved on 88/08/2013.},
}

\bib{exsqnlab}{misc}{
author={nLab},
title={exact square},
note={\href{http://ncatlab.org/nlab/show/exact+square\#characterization_15}{ncatlab.org/nlab/show/exact+square\#characterization{\textunderscore}15}, retrieved on 15/08/2013. },
}

\bib{Pan}{incollection}{
author = {Panangaden, Prakash},
title = {The category of {M}arkov kernels},
booktitle = {P{ROBMIV}'98: {F}irst {I}nternational {W}orkshop on {P}robabilistic {M}ethods in {V}erification ({I}ndianapolis, {IN})},
series = {Electron. Notes Theor. Comput. Sci.},
volume = {22},
pages = {17 pp. (electronic)},
publisher = {Elsevier},
year = {1999},
}

\bib{Riehl}{misc}{
author = {Riehl, Emily},
title = {A leisurely introduction to simplicial sets},
note = {\href{http://www.math.jhu.edu/~eriehl/ssets.pdf}{math.jhu.edu/$\sim$eriehl/ssets.pdf}, retrieved on 20/10/2016.},
}

\bib{Scott}{incollection}{
author = {Scott, Dana},
title = {Outline of a mathematical theory of computation},
booktitle = {Technical Monograph {PRG}-2},
publisher = {Oxford University Computing Laboratory},
year = {1970},
}

\bib{Segal}{article}{
author = {Segal, Graeme},
title = {Classifying spaces and spectral sequences},
journal = {Inst. Hautes \'Etudes Sci. Publ. Math.},
number = {34},
year = {1968},
pages = {105--112},
}

\bib{Steiner}{article}{
author = {Steiner, Richard},
title = {The algebra of the nerves of omega-categories},
journal = {Theory Appl. Categ.},
volume = {28},
number = {23},
pages = {733--779},
year = {2013},
}

\bib{Street}{article}{
author = {Street, Ross},
title = {The algebra of oriented simplexes},
journal = {J. Pure Appl. Alg.},
volume = {49},
year = {1987},
pages = {283--335},
}

\bib{HoTTbook}{book}{
author = {Univalent Foundations Program, The},
title = {Homotopy Type Theory},
publisher = {Institute for Advanced Study},
year = {2013},
}

\bib{Verity}{article}{
author = {Verity, Dominic},
title = {Complicial sets characterising the simplicial nerves of strict $\omega$-categories},
journal = {Mem. Amer. Math. Soc.},
volume = {193},
number = {905},
pages = {xvi+184 pp.},
year = {2008},
}

\bib{Vistoli}{misc}{
author = {Vistoli, Angelo},
title = {Notes on Grothendieck topologies, fibered categories and descent theory},
note = {\href{http://arxiv.org/abs/math.AG/0412512}{arXiv:math.AG/0412512}},
year = {2004},
}

\bib{V}{article}{
author = {Vorob'ev, N. N.},
title = {Consistent families of measures and their extensions},
publisher = {SIAM},
year = {1962},
journal = {Theory of Probability and its Applications},
volume = {7},
number = {2},
pages = {147-163},
}

\end{biblist}
\end{bibdiv}

\end{document}